\documentclass[fleqn, final]{zzz}

\usepackage{mathrsfs}
\usepackage{color}
\usepackage{tikz}
\usepackage{float}
\usepackage{graphicx}
\usepackage{rotating}
\usepackage[pdftex]{hyperref}
\usepackage{enumerate}
\usepackage{soul}

\hypersetup{
	colorlinks = true,
	urlcolor = blue,
	linkcolor = red!70!black,
	citecolor = green!70!black
}

\topmargin -1.6cm

\definecolor{Mycolor2}{HTML}{e85d04}
\sethlcolor{black}
\newcommand\moro[1]{{\textcolor{black}{#1}}} 


\newcommand\boro[1]{{\textcolor{black}{#1}}}

\newcommand{\II}[1]{\overset{\raisebox{0.5ex}{$#1$}}{\raisebox{-0.55cm}{\resizebox{0.3cm}{!}{\scalebox{0.4}[1.0]{\mbox{$\mathbb{I}$}}}}}}

\newcommand{\rphis}[2]{{}_{#1\vphantom{#2}}\phi_{#2\vphantom{#1}}}

\newcommand{\rphisx}[4]{\rphis{#1}{#2}\left( \begin{array}{c} #3 \end{array};q,#4\right)}

\newcommand{\Whyp}[5]{\,\mbox{}_{#1}W_{#2}\!\left({#3};{#4};{#5}\right)}
\newcommand{\qhyp}[5]{\,\mbox{}_{#1}\phi_{#2}\!\left(
\genfrac{}{}{0pt}{}{#3}{#4};#5\right)}
\newcommand{\qphyp}[6]{\,\sideset{_{#1}^{\phantom{\mid}}}{_{#2}^{#3}}{\mathop{\phi}}\!\left(\genfrac{}{}{0pt}{}{#4}{#5};#6\right)}
\newcommand{\nqphyp}[3]{\,\sideset{_{#1}^{\phantom{\mid}}}{_{#2}^{#3}}{\mathop{\phi}}}
\newcommand{\nlqphyp}[3]{\,\sideset{_{#1}}{_{#2}^{#3}}{\mathop{\phi}}}

\newcommand{\hyp}[5]{\,\mbox{}_{#1}F_{#2}\!\left(
 \genfrac{}{}{0pt}{}{#3}{#4};#5\right)}

\newtheorem{thm}{Theorem}[section]
\newtheorem{cor}[thm]{Corollary}

\newtheorem{rem}[thm]{Remark}
\newtheorem{lem}[thm]{Lemma}
\newtheorem{defn}[thm]{Definition}

\makeatletter
\def\eqnarray{\stepcounter{equation}\let\@currentlabel=\theequation
\global\@eqnswtrue
\tabskip\@centering\let\\=\@eqncr
$$\halign to \displaywidth\bgroup\hfil\global\@eqcnt\z@
$\displaystyle\tabskip\z@{##}$&\global\@eqcnt\@ne
\hfil$\displaystyle{{}##{}}$\hfil
&\global\@eqcnt\tw@ $\displaystyle{##}$\hfil
\tabskip\@centering&\llap{##}\tabskip\z@\cr}

\def\endeqnarray{\@@eqncr\egroup
\global\advance\c@equation\m@ne$$\global\@ignoretrue}

\def\@yeqncr{\@ifnextchar [{\@xeqncr}{\@xeqncr[5pt]}}
\makeatother

\parskip=0pt

\newcommand{\Z}{\mathbb{Z}} 
\newcommand{\N}{\mathbb{N}} 

\newcommand{\CC}{{{\mathbb C}}}
\newcommand{\CCast}{{{\mathbb C}^\ast}}
\newcommand{\CCdag}{{{\mathbb C}^\dag}}
\newcommand{\SSS}{{\mathcal S}}

\newcommand{\expe}{{\mathrm e}}

\usepackage{scalerel,url}
\let\svus_
\catcode`_=\active %
\def_{\ifmmode\expandafter\svus\expandafter\bgroup\expandafter\lowerit
\else\svus\fi}
\def\lowerit#1{\ThisStyle{\raisebox{-2\LMpt}{$\SavedStyle#1$}}\egroup}
\makeatletter
 \AtBeginDocument{
 \check@mathfonts
 \fontdimen13\textfont2=3.5pt
 \fontdimen14\textfont2=3.5pt
 \fontdimen16\textfont2=2.5pt
 \fontdimen17\textfont2=2.5pt
 }
\makeatother
\begin{document}

\renewcommand{\PaperNumber}{***}
\renewcommand{\PublicationYear}{2022}
\renewcommand{\LastPageEnding}{21}
\FirstPageHeading

\ShortArticleName{Utility of integral representations for basic hypergeometric functions
}
\ArticleName{Utility of integral representations for\\basic hypergeometric 
functions\\and orthogonal polynomials}

\Author{Howard S. Cohl
$\,^{\dag}$ and 
Roberto S. Costas-Santos$\,^{\S}$
}

\AuthorNameForHeading{H.~S.~Cohl and R.~S.~Costas-Santos}
\Address{$^\dag$ Applied and Computational 
Mathematics Division, National Institute of Standards 
and Tech\-no\-lo\-gy, Mission Viejo, CA 92694, USA
\URLaddressD{
\href{http://www.nist.gov/itl/math/msg/howard-s-cohl.cfm}
{http://www.nist.gov/itl/math/msg/howard-s-cohl.cfm}
}
} 
\EmailD{howard.cohl@nist.gov} 

\Address{$^\S$ Departamento de F\'isica y Matem\'{a}ticas,
Universidad de Alcal\'{a}, c.p. 28871, Alcal\'{a} de Henares, Spain} 
\URLaddressD{
\href{http://www.rscosan.com}
{http://www.rscosan.com}
}
\EmailD{rscosa@gmail.com} 


\ArticleDates{Received: 30 April 2021 / Accepted: 17 August 2021 / Published online: 23 June 2022}
\Abstract{
We describe the utility of integral representations for
sums of basic hypergeometric functions. 
In particular we use these to derive 
an infinite sequence 
of transformations for symmetrizations over certain 
variables which the functions possess. 
These integral representations were studied by 
Bailey, Slater, Askey, Roy, Gasper and 
Rahman and were also used to facilitate the computation
of certain outstanding problems in the theory of basic 
hypergeometric orthogonal polynomials in the $q$-Askey scheme. 
{We also generalize and give consequences and transformation formulas 
for some fundamental integrals connected to nonterminating basic 
hypergeometric series and the Askey--Wilson polynomials. We  express 
a certain integral of a ratio of infinite $q$-shifted factorials as a 
symmetric sum of two basic hypergeometric series with argument $q$. The result is then expressed as a $q$-integral.
Examples of integral representations applied to the
derivation of generating functions for Askey--Wilson are given
and as well} the
computation of a missing generating function 
for the continuous dual $q$-Hahn polynomials.
}
\Keywords{
basic hypergeometric functions; transformations; integral representations; basic hypergeometric orthogonal polynomials; generating functions}

\Classification{33D15, 33D60}


\begin{flushright}
\begin{minipage}{70mm}
\it Dedicated to the life and mathematics\\ of Dick Askey, 1933-2019.
\end{minipage}
\end{flushright}

\section{{Introduction}}
\label{Introduction}

{
The main aim of this paper is to demonstrate
the utility of revisiting the application
of integral representations for problems
in basic hypergeometric functions and
basic hypergeometric orthogonal
polynomials in the $q$-Askey scheme. 
We accomplish this by proving a collection
of identities which arise naturally through
the utilization of this powerful method.
For a detailed history of the subject of
integral representations for basic hypergeometric
functions,
see \cite{Gasper1989} and \cite[Chapter 4]{GaspRah}.}

{
All of the results presented below are
new but some rely heavily on identities 
which have been proven elsewhere in the literature. For instance Theorem \ref{gascard} is essentially a restatement of 
\cite[(4.10.5-6)]{GaspRah}, which in turn is closely connected to \cite[{(5.2.4) and} (5.2.20)]{Slater66}. However,
our introduction of the useful $t$ parameter
in Theorem \ref{gascard} (see for instance Lemma \ref{intlem}, Corollary \ref{corsumDC} and Corollary \ref{abcd2}) is new. Also, our
utilization of the  
powerful van de Bult--Rains notation 
for basic hypergeometric series with vanishing numerator or denominator parameters 
(see 
\eqref{topzero}, \eqref{botzero} below)
in Theorem \ref{gascard} allows for a 
clear elucidation of structure which in our opinion, is not as such in previous incarnations of
this or related results in the literature.
Furthermore, even though we believe that {Theorem} \ref{gascaru} is new in its full generality, the ideas which went into it have been used many times in the literature, such as in derivations of the 
Askey--Wilson integral \eqref{AWint}, the Nassrallah--Rahman integral \eqref{NRint},
the Rahman integral \eqref{Rint}, the Askey--Roy integral \eqref{ARint} and the
Gasper integral \eqref{Gint}, as well as 
in fundamental results such as 
\cite[Exercises 4.4, 4.5]{GaspRah} which
reappear in \eqref{fourfour} and \eqref{IRAW}.
}

\subsection{Preliminaries}
\label{Preliminaries}

We adopt the following set 
notations: $\mathbb N_0:=\{0\}\cup\N=\{0, 1, 2, ...\}$, and we 
use the sets $\mathbb Z$, $\mathbb R$, $\mathbb C$ which represent 
the integers, real numbers and 
complex numbers respectively, $\CCast:=\CC\setminus\{0\}$, and 
{$\CCdag:=\{z\in\CCast: |z|<1\}$.}
We also adopt the following notation and conventions.
{Also, given a set ${\bf a}:=\{a_1,\ldots,a_A\}$, for $A\in\N$, then we define ${\bf a}_{[k]}:={\bf a}\setminus\{a_k\}$, $1\le k\le A$,
$b\,{\bf a}:=\{b\, a_1, b\, a_2, \ldots, b\, a_A\}$,
${\bf a}+b:=\{a_1+b,a_2+b,\ldots,a_A+b\}$,
where
$b,a_1,\ldots,a_A\in\mathbb C$.
}

\medskip
We assume that the empty sum 
vanishes and the empty product 
is unity.
{
We will also adopt the following symmetric sum notation.
\begin{defn}
For some function $f(a_1,\ldots,a_n;{\bf b})$,
where ${\bf b}$ is some set of parameters.
Then 
\begin{equation}
\II{a_1\boro{;a_2},\ldots,a_n}\!\!\!
f(a_1,\ldots,a_n;{\bf b})
:=f(a_1,\ldots,a_n;{\bf b})+
{\mathrm{idem}}(a_1;a_2,\ldots,a_n),
\end{equation}
where ``\,${\mathrm{idem}}(a_1;a_2,\ldots,a_n)$'' after an 
expression stands for the sum of the $n-1$ expressions 
obtained from the preceding expression by interchanging $a_1$ 
with each $a_k$, $k=2,3,\ldots,n$.
\end{defn}
}
\begin{defn} \label{def:2.1}
We adopt the following conventions for succinctly 
writing elements of {sets}. To indicate sequential positive and negative 
elements, we write
\[
\pm a:=\{a,-a\}.
\]
\noindent We also adopt an analogous notation
\[
\expe^{\pm i\theta}:=\{\expe^{i\theta},\expe^{-i\theta}\}.
\]
\noindent In the same vein, consider the numbers $f_s\in\mathbb C$ 
with $s\in{\mathcal S}\subset \N$,
with ${\mathcal S}$ finite.
Then, the notation
$\{f_s\}$
represents the set of all complex numbers $f_s$ such that 
$s\in\SSS$.
Furthermore, consider some $p\in\SSS$, then the notation
$\{f_s\}_{s\ne p}$ represents the sequence of all complex numbers
$f_s$ such that $s\in\SSS\!\setminus\!\{p\}$.
\end{defn}

Consider $q\in\CCdag$, $n\in\mathbb N_0$.
Define the sets 
$\Omega_q^n:=\{q^{-k}: k\in\mathbb N_0,~0\le k\le n-1\}$,
$\Omega_q:=\Omega_q^\infty=\{q^{-k}:k\in\mathbb N_0\}$.
In order to obtain our derived identities, we rely on properties 
of the {$q$-shifted factorial $(a;q)_n$}. {It has been pointed out 
by the referee that Askey, partly for historical reasons and partly because he preferred descriptive names to honorifics, referred to $(a;q)_n$ as a $q$-shifted factorial rather than the other common nomenclature:~$q$-Pochhammer symbol.} 
For any $n\in \mathbb N_0$, $a,{b}, q \in \mathbb C$, 
the 
$q$-shifted factorial is defined as
\begin{eqnarray}
&&\hspace{-3.3cm}\label{poch.id:1} 
(a;q)_n:=(1-a)(1-aq)\cdots(1-aq^{n-1}).
\end{eqnarray}
One may also define
\begin{eqnarray}
&&\hspace{-9cm}(a;q)_\infty:=\prod_{n=0}^\infty (1-aq^{n}), \label{infPochdefn}\\
&&\hspace{-9cm}
\label{theta}
{\vartheta(x;q):=(x,q/x;q)_\infty,}
\end{eqnarray}
{where $|q|<1$, \boro{$x\ne 0$} and \eqref{theta}
defines the modified theta function of 
nome $q$ \cite[(11.2.1)]{GaspRah}.}
\moro{Note that $\vartheta(q^n;q)=0$ for all
$n\in{\mathbb Z}$.}
Furthermore, one has the following
identities
\begin{eqnarray}
&&\hspace{-8cm}\label{sq}
(a^2;q)_\infty=(\pm a,\pm q^\frac12 a;q)_\infty,\\
&&\hspace{-8cm}\label{infone}
\frac{(a,q/a;q)_\infty}
{(qa,1/a;q)_\infty}=
{\frac{\vartheta(a;q)}{\vartheta(qa;q)}}=-a\boro{,}
\label{infneg}
\end{eqnarray}
\boro{where $a\ne 0$. Moreover}, define 
\begin{equation}
(a;q)_b:=\frac{(a;q)_\infty}{(a q^b;q)_\infty},
\label{infPochdefnb}
\end{equation}
where $a q^b\not \in \Omega_q$.
We will also use the common notational product conventions
\begin{eqnarray}
&&\hspace{-8cm}(a_1,...,a_k;q)_b:=(a_1;q)_b\cdots(a_k;q)_b.\nonumber
\end{eqnarray}

The following properties for the {$q$-shifted factorial} can be found in Koekoek et al. 
\cite[(1.8.7), (1.8.10-11), (1.8.14), (1.8.19), (1.8.21-22)]{Koekoeketal}, 
namely for appropriate values of $q,a\in\CCast$ and $n,k\in\mathbb N_0$:
\begin{eqnarray}
\label{poch.id:3} &&\hspace{-6.5cm}
(a;q^{-1})_n=(a^{-1};q)_n(-a)^nq^{-\binom{n}{2}},
\\[2mm] 
\label{poch.id:4}
&&\hspace{-6.5cm}(a;q)_{n+k}=(a;q)_k(aq^k;q)_n 
= (a;q)_n(aq^n;q)_k,\\[2mm] 
\label{poch.id:5}&&\hspace{-6.5cm} (a;q)_n
=(q^{1-n}/a;q)_n(-a)^nq^{\binom{n}{2}},\\[2mm]
\label{poch.id:6}&&\hspace{-6.5cm}(aq^{-n};q)_{k}=q^{-nk}
\frac{(q/a;q)_n}{(q^{1-k}/a;q)_n}(a;q)_k,\\[2mm]
\label{poch.id:8}&&\hspace{-6.5cm}(a^2;q^2)_n=(\pm a;q)_n,\\[2mm]
\label{poch.id:7}&&\hspace{-6.5cm}(a;q)_{2n}
=(a,aq;q^2)_n=(\pm\sqrt{a},\pm\sqrt{qa};q)_n.
\end{eqnarray}
\noindent Observe that\boro{,} by using (\ref{poch.id:4}) and (\ref{poch.id:7}), one obtains
\begin{eqnarray}
&&\hspace{-6.5cm}(aq^n;q)_n=\frac{(\pm 
\sqrt{a},\pm \sqrt{qa};q)_n}{(a;q)_n},\quad 
a\not\in\Omega_q^n. 
\label{poch.id:9}
\end{eqnarray}
\noindent {
Define the Jackson $q$-integral as in \cite[(1.11.2)]{GaspRah}
\begin{eqnarray}
&&\hspace{-2.3cm}\int_a^b f(u;q){\mathrm d}_qu=(1-q)b\sum_{n=0}^\infty q^nf(q^nb;q)-(1-q)a\sum_{n=0}^\infty q^nf(q^na;q)
\label{qint}\\
&&\hspace{0.1cm}=\frac{(1-q)ab}{a-b}
\II{a\boro{;}b}\!\!\left(1-\frac{a}{b}\right)\sum_{n=0}^\infty q^n f(q^na;q)\label{qJacsym}
\\
&&\hspace{0.1cm}{=\frac{b}{a-b}\II{a\boro{;}b}\!\!
\left(1-\frac{a}{b}\right)
\int_0^a f(u;q){\mathrm d}_qu},
\nonumber
\end{eqnarray}
where we have utilized 
\eqref{infneg} to write 
the $q$-integral as a 
symmetric sum.
}

\medskip
{
The nonterminating basic hypergeometric series, 
which we 
will often use, is defined for
{$z\in \mathbb C$},
$q\in\CCdag$, 
$s\in\mathbb N_0$, $r\in\mathbb N_0\cup\{-1\}$,
$b_j\not\in\Omega_q$, $j=1,...,s$, as
\cite[(1.10.1)]{Koekoeketal}
\begin{equation}
\qhyp{r+1}{s}{a_1,...,a_{r+1}}
{b_1,...,b_s}
{q,z}
:=\sum_{k=0}^\infty
\frac{(a_1,...,a_{r+1};q)_k}
{(q,b_1,...,b_s;q)_k}
\left((-1)^kq^{\binom k2}\right)^{s-r}
z^k.
\label{2.11}
\end{equation}
\noindent {For $s>r$, ${}_{r+1}\phi_s$ is an entire
function of $z$, for $s=r$ then 
${}_{r+1}\phi_s$ is convergent for $|z|<1$, and for $s<r$ the series
is divergent.} 
\begin{rem}
Sometimes we also use generalized hypergeometric series 
${}_{r+1}F_s$ which
is the $q\uparrow 1$ limit of basic hypergeometric series
(see for instance \cite[p.~15]{Koekoeketal}).
For their properties, see \cite[Chapter 16]{NIST:DLMF}.
\end{rem}
Note that we refer to a basic hypergeometric
series as $\ell$-balanced if
$q^\ell a_1\cdots a_{r+1}=b_1\cdots b_s$, 
and balanced 
 (Saalsch\"utzian) if $\ell=1$
{(see \cite[Definition 3.3.1]{AAR}, \cite[p.~5]{GaspRah})}.
{The referee has pointed out that
for the very important $\ell=1$ case,
the term {\it balanced} was introduced
by Askey, whereas the earlier term 
Saalsch\"utzian is due to Whipple  and was used by Bailey, but lost much of its
force after Askey's discovery in 1975 that Pfaff had Saalsch\"utz's identity 93 years
earlier in 1797.
}
A basic hypergeometric series ${}_{r+1}\phi_r$ is { well-poised} if 
the parameters satisfy the relations
\[
qa_1=b_1a_2=b_2a_3=\cdots=b_ra_{r+1}.
\]
\noindent It is {very-well poised} if in addition, 
$\{a_2,a_3\}=\pm q\sqrt{a_1}$.
}
Define the very-well poised 
basic hypergeometric series
${}_{r+1}W_r$ \cite[(2.1.11)]{GaspRah}
\begin{equation}
\label{rpWr}
{}_{r+1}W_r(b;a_4,\ldots,a_{r+1};q,z)
:=\qhyp{r+1}{r}{\pm q\sqrt{b},b,a_4,\ldots,a_{r+1}}
{\pm \sqrt{b},\frac{qb}{a_4},\ldots,\frac{qb}{a_{r+1}}}{q,z},
\end{equation} 
where $\sqrt{b},\frac{qb}{a_4},\ldots,\frac{qb}{a_{r+1}}\not\in\Omega_q$. 
When the very-well poised basic hypergeometric
series is termina\-ting, then one has
\begin{equation}\label{eq:2.13}
{}_{r+1}W_r\left(b;q^{-n},a_5,\ldots,a_{r+1};q,z\right)
=\qhyp{r+1}{r}{q^{-n},\pm q\sqrt{b}, b, a_5,\ldots,a_{r+1}}
{\pm \sqrt{b},q^{n+1}b,\frac{qb}{a_5},\ldots,\frac{qb}{a_{r+1}}}
{q,z},
\end{equation}
where $\sqrt{b},\frac{qb}{a_5},\ldots,\frac{qb}{a_{r+1}}\not\in
\Omega_q^n\cup\{0\}$.
The Askey--Wilson polynomials are intimately connected with 
the terminating very-well poised ${}_8W_7$, which is
given by
\begin{equation}
\label{VWP87}
{}_{8}W_7(b;q^{-n}\!,c,d,e,f;q,z)
=\qhyp{8}{7}{q^{-n},\pm q\sqrt{b}, b,c,d,e,f}
{\pm \sqrt{b},q^{n+1}b,\frac{qb}{c},\frac{qb}{d},\frac{qb}{e},\frac{qb}{f}}
{q,z},
\end{equation}
where $\sqrt{b},\frac{qb}{c},\frac{qb}{d},\frac{qb}{e},\frac{qb}{f}\not
\in\Omega_q^n\cup\{0\}$.

\medskip {
In the sequel, we will use the following notation 
$\nqphyp{r+1}{s}{m}$,  $m\in\mathbb Z$
(originally due to van de Bult \& Rains
\cite[p.~4]{vandeBultRains09}),  
for basic hypergeometric series with
zero parameter entries.
Consider $p\in\mathbb N_0$. Then define
\begin{equation}\label{topzero} 
\nlqphyp{r+1}{s}{-p}\!
\left(\!\begin{array}{c}a_1,\ldots,a_{r+1} \\
b_1,\ldots,b_s\end{array};q,z
\right)
:=
\rphisx{r+p+1}{s}{a_1,a_2,\ldots,a_{r+1},\overbrace{0,\ldots,0}^{p} \\ b_1,b_2,\ldots,b_s}{z},
\end{equation}
\begin{equation}\label{botzero}
\nlqphyp{r+1}{s}{\,p}\!
\left(\!\begin{array}{c}a_1,\ldots,a_{r+1} \\
b_1,\ldots,b_s\end{array};q,z
\right)
:=
\rphisx{r+1}{s+p}{a_1,a_2,\ldots,a_{r+1} \\ b_1,b_2,\ldots,b_s, \underbrace{0,\ldots,0}_p}{z},
\end{equation}
where $b_1,\ldots,b_s\not
\in\Omega_q\cup\{0\}$, and
$
\nlqphyp{r+1}{s}{0}
={}_{r+1}\phi_{s}
.$
The nonterminating basic hypergeometric series 
$\nlqphyp{r+1}{s}{m}
({\bf a};{\bf b};q,z)$, ${\bf a}:=\{a_1,\ldots,a_{r+1}\}$,
${\bf b}:=\{b_1,\ldots,b_s\}$, is well-defined for $s-r+m\ge 0$. In particular 
$\nlqphyp{r+1}{s}{m}$
is an entire function of $z$ for $s-r+m>0$, convergent for $|z|<1$ for $s-r+m=0$ and divergent if $s-r+m<0$.
Note that we will move interchangeably between the
van de Bult \& Rains notation and the alternative
notation with vanishing numerator and denominator parameters
which are used on the right-hand sides of \eqref{topzero} and \eqref{botzero}.}

\subsection{{\hspace{-2mm}{The Askey--Wilson polynomials 
and related 
fundamental integrals}}}

{Let $n\in\mathbb N_0$, $q\in\CCdag$. 
For the Askey--Wilson polynomials $p_n(x;{\bf a}|q)$, which are symmetric
in four free parameters, we will switch interchangeably
with the notation ${\bf a}:=\{a_1,a_2,a_3,a_4\}$ and 
${\mathfrak a}:=\{a,b,c,d\}$, 
${\bf a}={\mathfrak a}$,
$a,b,c,d\in\CCast$,
and  similarly for the continuous dual $q$-Hahn polynomials
which are symmetric in three free parameters.
Define
$a_{12}:=a_1a_2$,
$a_{13}:=a_1a_3$,
$a_{23}:=a_2a_3$,
$a_{123}:=a_1a_2a_3$,
$a_{1234}:=a_1a_2a_3a_4$, etc. }
\medskip

The Askey--Wilson polynomials can be defined in 
terms of the terminating basic hypergeometric series
\cite[(14.1.1)]{Koekoeketal}
\begin{equation}
p_n(x;{\mathfrak a}|q):=a^{-n}(ab,ac,ad;q)_n\qhyp43{q^{-n};
q^{n-1}abcd,a\expe^{\pm i\theta}}{ab,ac,ad}{q,q},
\quad x=\cos \theta,\label{AW}
\end{equation}
such that $ab, ac, ad\not \in \Omega_q^n$.
The Askey--Wilson polynomials are orthogonal
on $(-1,1)$
with respect to the 
weight function
\begin{eqnarray}
&&\hspace{-5.1cm}
w_q(\cos\theta;{\bf a}):=
\frac{(\expe^{\pm 2i\theta};q)_\infty}
{({\bf a}\expe^{\pm i\theta};q)_\infty}=
\label{AWweight}
\frac{(\pm\expe^{\pm i\theta},\pm q^\frac12 \expe^{\pm i\theta};q)_\infty}
{({\bf a}\expe^{\pm i\theta};q)_\infty},
\end{eqnarray}
where the second equality is due to
\eqref{sq}.
The orthogonality relation for
Askey--Wilson polynomials is
\cite[(14.1.2)]{Koekoeketal}
\begin{equation}
\int_0^\pi p_m(x;{\bf a}|q)p_n(x;{\bf a}|q)w_q(x;{\bf a})\,
{\mathrm d}\theta=h_n({\bf a};q)\delta_{m,n},
\end{equation}
where 
\begin{equation}
h_n({\bf a};q):=\frac{2\pi(q^{n-1}a_{1234};q)_n(q^{2n}
a_{1234};q)_\infty}{(q^{n+1},q^na_{12},q^na_{13}
,q^na_{14},q^na_{23},q^na_{24},q^na_{34};q)_\infty}.
\end{equation}

{
We will also rely on several important generalized
$q$-beta integrals. The first is the Askey--Wilson integral 
\cite[(6.1.1)]{GaspRah} (the integral over the full domain of the
Askey--Wilson weight \eqref{AWweight})
\begin{eqnarray}
&&\hspace{-4.3cm}\int_0^\pi\frac{(\expe^{\pm 2i\theta};q)_\infty}
{({\bf a}\expe^{\pm i\theta};q)_\infty}
{\mathrm d}\theta
=\frac{2\pi(a_{1234};q)_\infty}{(q,a_{12},a_{13}
,a_{14},a_{23},a_{24},a_{34};q)_\infty}{,}
\label{AWint}
\end{eqnarray}
{where $\max(|a_1|,\ldots,|a_4|)<1$}.
Note the Askey--Wilson norm for $n=0$ is equal to the 
evaluation of the Askey--Wilson integral \eqref{AWint}.
The second is the Nassrallah--Rahman integral 
(in symmetrical form) 
\moro{\cite[(6.3.9)]{GaspRah}}
which generalizes the Askey--Wilson integral, 
namely \cite[(3.1)]{Rahman86prodctsqJ}.
Let {${\bf a}:=\{a_1,a_2,a_3,a_4,a_5\}$}. Then
\begin{eqnarray}
&&\hspace{-0.8cm}
\int_0^\pi\frac{(\expe^{\pm 2i\theta},
\lambda\expe^{\pm i\theta};q)_\infty}
{({\bf a}\expe^{\pm i\theta};q)_\infty}
{\mathrm d}\theta
=\frac{2\pi(\lambda{\bf a},
\lambda ^{-1}a_{12345};q)_\infty}
{(q,a_{12},\ldots,
a_{45},\lambda ^2;q)_\infty}
\Whyp87{\frac{\lambda ^2}{q}}{
\frac{\lambda}{{\bf a}}
}
{q,\frac{a_{12345}}{\lambda}}{,}
\label{NRint}
\end{eqnarray}
{where $\max(|a_1|,\ldots,|a_5|)<1$ \boro{and $|a_{12345}|<|\lambda|$}}.
The Nassrallah--Rahman integral \eqref{NRint} becomes the Askey--Wilson 
integral \eqref{AWint} for $\lambda =a_5.$ 
}

{
The third is the Rahman integral
\cite[Exercise 6.7]{GaspRah} which generalizes
the Nassrallah--Rahman integral.
Let {${\bf a}:=\{a_1,a_2,a_3,a_4,a_5,a_6\}$}. Then
\begin{eqnarray}
&&\hspace{-0.8cm}\int_0^\pi\frac{(\expe^{\pm 2i\theta},\lambda
\expe^{\pm i\theta},\mu\expe^{\pm i\theta};q)_\infty}
{({\bf a}\expe^{\pm i\theta}
;q)_\infty}
{\mathrm d}\theta=\frac{2\pi}{(q,a_{12},\ldots,
a_{56};q)_\infty}\nonumber\\
&&\hspace{0.0cm}\times\Biggl(
\frac{(
\lambda{\bf a},
\frac{\mu}{{\bf a}}
;q)_\infty}
{(\lambda^2,\mu/\lambda;q)_\infty}\Whyp{10}9{\frac{\lambda^2}
{q}}{\frac{\lambda\mu}{q},
\frac{\lambda}{{\bf a}}
}
{q,q}
+\frac{(
\mu{\bf a},
\frac{\lambda}{\bf a}
;q)_\infty}
{(\mu^2,\lambda/\mu;q)_\infty}\Whyp{10}9{\frac{\mu^2}{q}}{
\frac{\lambda\mu}{q},
\frac{\mu}{\bf a}
}{q,q}
\Biggr),
\label{Rint}
\\
&&=\frac{2\pi}{(q,a_{12},\ldots,
a_{56};q)_\infty}\II{\lambda\boro{;}\mu}
\frac{(
\lambda{\bf a},
\frac{\mu}{{\bf a}}
;q)_\infty}
{(\lambda^2,\mu/\lambda;q)_\infty}\Whyp{10}9{\frac{\lambda^2}
{q}}{\frac{\lambda\mu}{q},
\frac{\lambda}{{\bf a}}
}
{q,q},
\end{eqnarray}
where 
$\lambda\mu=a_{123456}$ 
{and $\max(|a_1|,\ldots,|a_6|)<1$}.
Given that $\lambda\ne 0$, then if 
$\mu=a_6\to 0$, then the Rahman integral \eqref{Rint} becomes 
the Nassrallah--Rahman integral \eqref{NRint}.
See \cite{Rahman88} for some other interesting 
limits of the Rahman integral \eqref{Rint}.
}

\medskip
{Some other important integrals related
to basic hypergeometric functions 
are the 
$q$-beta integrals. 
The first one we 
mention is 
due to Askey--Roy 
\cite[(2.8)]{AskeyRoy86}
\begin{equation}
\int_{-\pi}^\pi
\frac{
\big((f{c},\frac{q}{f}d)\frac{\sigma}{z},
(\frac{f}{d},\frac{q}{fc})\frac{z}{\sigma};q\big)_\infty}
{\big((c,d)\frac{\sigma}{z},
(a,b)\frac{z}{\sigma};q\big)_\infty}
{\mathrm d}\psi=2\pi
\frac{
{\vartheta(f,f\frac{c}{d};q)}
(abcd;q)_\infty}
{(q,ac,ad,bc,bd;q)_\infty},
\label{ARint}
\end{equation}
where $z=\expe^{i\psi}$, 
$\max(|q|,|a{/\sigma}|,|b{/\sigma}|,|{\sigma}c|,|{\sigma}d|)<1$ and
$cdf\ne 0$. The second important
$q$-beta integral 
is due to Gasper \cite[(1.8)]{Gasper1989}, namely
\begin{equation}
\int_{-\pi}^\pi
\frac
{((f{c},\frac{q}{f}d)\frac{\sigma}{z},
(\frac{f}{d},\frac{q}{fc},abcde)\frac{z}{\sigma};q)_\infty}
{((c,d)\frac{\sigma}{z},
(a,b,e)\frac{z}{\sigma};q)_\infty}
{\mathrm d}\psi
=2\pi
\frac{
{\vartheta(f,f\frac{c}{d};q)}
(abcd,bcde,acde;q)_\infty}
{(q,ac,ad,bc,bd,ce,de;q)_\infty},
\label{Gint}
\end{equation}
where $z=\expe^{i\psi}$, 
$\max(|q|,|a{/\sigma}|,|b{/\sigma}|,|{\sigma} c|,|{\sigma} d|,|e{/\sigma}|)<1$ and
$cdf\ne 0$.
This integral extends the
Askey--Roy integral
\eqref{ARint} and reduces to it
when $e$ is set to $0$.
{Note that \eqref{Gint} reduces to an 
expression equivalent to 
\cite[(1.8)]{Gasper1989} by taking $\sigma\to 1$ and $f\mapsto f/c$.}
}
{
\begin{rem}
Note that it is the special choice of numerator
parameter behavior in the Askey--Roy and Gasper
integrals which allow one to obtain these
closed-form infinite product representations.
We will return to this in {Theorem} 
\ref{gascaru}.
\end{rem}
}
\section{{Integral representations for basic hypergeometric functions}}
{Here we present a result which follows
by contour integration of products
and quotients of $q$-gamma functions
multiplied by integer powers of a 
complex exponential. Much of the
derivations presented here follow the
pioneering work of Bailey
\cite[Chapter 8]{Bailey64}, his student
Slater 
\cite[Chapters 5 and 7]{Slater66}
and especially from such
works of Askey and Roy \cite{AskeyRoy86}, 
{Nassrallah} and Rahman
\cite{NassrallahRahman85},
Rahman \cite{Rahman10W986},
Gasper \cite{Gasper1989}, and
Gasper and Rahman who
carefully reviewed early preliminary
results as well as deriving fundamental extensions
in \cite[Chapters 4 and 6]{GaspRah}.
}
The following 
theorem is a straightforward generalization 
of Corollary 2.4 in
\cite{IsmailStanton15},
and essentially a restatement of 
\cite[(4.10.5-6)]{GaspRah} using the 
van de Bult--Rains notation
\eqref{topzero}, \eqref{botzero} for
basic hypergeometric series with vanishing
numerator
or denominator parameters.

{
\begin{thm} \label{gascard}
Let  $q\in\CCdag$, $m\in \mathbb Z$,
$t\in \CCast$,
\moro{$\sigma\in(0,\infty)$,}
${\bf a}:=\{a_1,\ldots,a_A\}$, 
${\bf b}:=\{b_1,\ldots,b_B\}$, 
${\bf c}:=\{c_1,\ldots,c_C\}$, 
${\bf d}:=\{d_1,\ldots,d_D\}$ be
sets of complex numbers with cardinality 
$A, B, C, D\in\mathbb N_0$ (not all zero) respectively with
$|c_k|<\sigma/|t|$,
$|d_l|<1/\sigma$,
for any 
$a_i, b_j, c_k, d_l\in\CC$ elements of 
${\bf a}, {\bf b}, {\bf c}, {\bf d}$,
and $z=\expe^{i\psi}$.
Define
\begin{eqnarray}
&&\hspace{-1.7cm}G_{m,t}:=G_{m,t}({\bf a},{\bf b},{\bf c},{\bf d};\sigma,q)
:=\frac{(q;q)_\infty}{2\pi}\left(\frac{\sqrt{t}}{\sigma}\right)^m\int_{-\pi}^\pi
\frac{({\bf b}\frac{\sigma}{z}, t{\bf a}\frac{z}{\sigma};q)_\infty}
{({\bf d}\frac{\sigma}{z}, t{\bf c}\frac{z}{\sigma};q)_\infty}
\expe^{im\psi}{\mathrm d}{\psi},
\label{gascardeq}
\end{eqnarray}
such that the integral exists. Then
\begin{equation}
G_{m,t}({\bf a},{\bf b},{\bf c},{\bf d};\sigma,q)
=G_{-m,t}({\bf b},{\bf a},{\bf d},{\bf c};\sigma,q),
\label{Gmt0}
\end{equation}
if $|c_k|,|d_l|<\min\{1/\sigma, 
\sigma/|t|\}$.
Furthermore, let $td_lc_k\not\in\Omega_q$.
If $D\ge B$,  $d_l/d_{l'}\not\in\Omega_q$, $l\ne l'$, then
\begin{eqnarray}
\label{Gmt1}
&&\hspace{-0.6cm}G_{m,t}\!=\!t^\frac{m}{2}
\sum_{k=1}^D
\frac{(td_k{\bf a},{\bf b}/d_k;q)_\infty d_k^m}
{(td_k{\bf c},{\bf d}_{[k]}/d_k;q)_\infty}
\nqphyp{B+C}{A+D-1}{C-A}
\left(\!\!\begin{array}{c}
td_k{\bf c},q d_k/{\bf b}\\
td_k{\bf a},q d_k/{\bf d}_{[k]}
\end{array}
;q,q^m(qd_k)^{D-B}\frac{b_1\cdots b_B}{d_1\cdots d_D}\right)\!,
\end{eqnarray}
and/or if $C\ge A$, $c_k/c_{k'}\not\in\Omega_q$, $k\ne k'$, then
\begin{eqnarray}
&&\hspace{-0.55cm}G_{m,t}=
\frac{1}{t^{\frac{m}{2}}}
\sum_{k=1}^C\!\frac{(\,tc_{k}{\bf b},
{\bf a}/c_{k};q)_\infty c_k^{-m}
}
{(tc_k{\bf d},{\bf c}_{[k]}/c_k ;q)_\infty}
\nqphyp{A+D}{{B+C-1}}{{D-B}}
\!\!\left(\!\!\begin{array}{c} tc_k\,{\bf d}, q c_{k}/
{\bf a}\\ tc_k\,{\bf b},q c_k/{\bf c}_{[k]} \end{array}\!\!\!;q,
q^{-m}(q c_k)^{C-A}\frac{a_1\cdots a_A}{c_1\cdots c_C}\right)\!,
\label{Gmt2}
\end{eqnarray}
\noindent where the nonterminating basic hypergeometric series 
in \eqref{Gmt1} 
(resp.~\eqref{Gmt2}) 
is  entire if $D>B$ 
(resp.~$C>A$), convergent for
$|q^mb_1\cdots b_B|<|d_1\cdots d_D|$ if $D=B$ 
(resp. $|q^{-m}a_1\cdots a_A|<|c_1\cdots c_C|$ if $C=A$), 
and divergent otherwise.
\end{thm}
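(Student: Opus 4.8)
The plan is to recast the defining integral \eqref{gascardeq} as a contour integral around the unit circle and evaluate it by the residue theorem. Writing $z=\expe^{i\psi}$, so that ${\mathrm d}\psi={\mathrm d}z/(iz)$, the integral over $\psi\in[-\pi,\pi]$ becomes, up to the prefactor in \eqref{gascardeq}, a contour integral $\oint_{|z|=1}F(z)\,{\mathrm d}z$ of the meromorphic integrand $F(z):=\frac{(\mathbf b\frac{\sigma}{z},t\mathbf a\frac{z}{\sigma};q)_\infty}{(\mathbf d\frac{\sigma}{z},t\mathbf c\frac{z}{\sigma};q)_\infty}z^{m-1}$. Since $(a;q)_\infty$ vanishes exactly when $a\in\Omega_q$, the poles of $F$ fall into two families: those from the factors $(d_l\frac{\sigma}{z};q)_\infty$, located at $z=\sigma d_l q^n$ ($n\in\No$), and those from $(tc_k\frac{z}{\sigma};q)_\infty$, located at $z=\sigma q^{-n}/(tc_k)$ ($n\in\No$). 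The hypotheses $|d_l|<1/\sigma$ and $|c_k|<\sigma/|t|$ are precisely what place the first family strictly inside $|z|=1$ and the second strictly outside, so the unit circle separates the two towers. Because the $\mathbf d$-poles accumulate at $z=0$, that point is an essential singularity, so closing the contour will amount to summing an infinite residue series rather than a single residue.

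First I would prove \eqref{Gmt1}. Assuming $D\ge B$, I would apply the residue theorem to $\oint_{|z|=1}F(z)\,{\mathrm d}z$, collecting the residues at the interior poles $z=\sigma d_k q^n$. The conditions $d_l/d_{l'}\notin\Omega_q$ and $td_lc_k\notin\Omega_q$ ensure these poles are simple (the $\mathbf d$-towers are disjoint and the $\mathbf c$-factors do not vanish on them). The residue at $z=\sigma d_k q^n$ is obtained by isolating the vanishing factor $1-d_k\sigma q^n/z$ of $(d_k\frac{\sigma}{z};q)_\infty$ and evaluating the remaining factors, using \eqref{poch.id:4} to split each $q$-shifted factorial at index $n$ and \eqref{poch.id:3}, \eqref{poch.id:5}, \eqref{poch.id:6} to convert the $q^{-n}$-type arguments into standard $(\cdot;q)_n$ symbols with the correct powers of $q^{\binom n2}$. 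Collecting the $n$-independent factors outside the sum produces the product $(td_k\mathbf a,\mathbf b/d_k;q)_\infty d_k^m/(td_k\mathbf c,\mathbf d_{[k]}/d_k;q)_\infty$, while the $n$-dependent part assembles into the general term of the series in \eqref{Gmt1}; a direct count of numerator and denominator parameters yields $\nqphyp{B+C}{A+D-1}{C-A}$ through \eqref{topzero} and \eqref{botzero}, and the accumulated powers of $q$ and $d_k$ produce the stated argument $q^m(qd_k)^{D-B}b_1\cdots b_B/(d_1\cdots d_D)$. This series is well-defined precisely when $D\ge B$, and its convergence region (entire for $D>B$, modulus $<1$ for $D=B$) coincides with the regime in which the residue series converges, which simultaneously justifies the contour-closing step.

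To prove the symmetry \eqref{Gmt0}, I would use the substitution $z\mapsto\sigma^2/(tz)$ (equivalently $\psi\mapsto-\psi$ followed by a rescaling). A short computation shows this carries the integrand of $G_{m,t}(\mathbf a,\mathbf b,\mathbf c,\mathbf d)$ onto that of $G_{-m,t}(\mathbf b,\mathbf a,\mathbf d,\mathbf c)$: the argument $b_j\frac{\sigma}{z}$ becomes $tb_j\frac{z}{\sigma}$, while $ta_i\frac{z}{\sigma}$ becomes $a_i\frac{\sigma}{z}$, and similarly for $\mathbf c,\mathbf d$, with $\expe^{im\psi}\mapsto\expe^{-im\psi}$ and the prefactor $(\sqrt t/\sigma)^m$ turning into $(\sqrt t/\sigma)^{-m}$. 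The subtlety is that this substitution sends $|z|=1$ to the circle $|z|=\sigma^2/|t|$, so one must deform back to the unit circle; the hypothesis $|c_k|,|d_l|<\min\{1/\sigma,\sigma/|t|\}$ is exactly the requirement that no poles of the transformed integrand lie in the annulus between the two circles, making the deformation legitimate.

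Finally, \eqref{Gmt2} follows by applying \eqref{Gmt1} to the right-hand side of \eqref{Gmt0}: in $G_{-m,t}(\mathbf b,\mathbf a,\mathbf d,\mathbf c)$ the four sets and the sign of $m$ are interchanged, so the hypothesis $D\ge B$ of \eqref{Gmt1} becomes $C\ge A$ and the sum runs over the $\mathbf c$ parameters, reproducing \eqref{Gmt2} after relabeling; alternatively one evaluates the contour integral directly by summing the exterior $\mathbf c$-tower residues. I expect the principal obstacle to be the residue bookkeeping of the second paragraph, namely tracking every power of $q$, $q^{\binom n2}$, $t$, $\sigma$ and $d_k$ through the $q$-shifted-factorial manipulations so that the summand matches the van de Bult--Rains series exactly, including the vanishing-parameter count $C-A$, together with the analytic justification that the infinite residue series converges and represents the integral in the stated regions.
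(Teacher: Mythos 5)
Your proposal is correct, but it takes a genuinely different route from the paper. The paper's proof is essentially by citation and rescaling: it observes that \eqref{gascardeq} is \cite[(4.9.3)]{GaspRah} with the parameter sets replaced by ${\bf a}t/\sigma$, ${\bf b}\sigma$, ${\bf c}t/\sigma$, ${\bf d}\sigma$; that \eqref{Gmt1} and \eqref{Gmt2} are then exactly \cite[(4.10.5-6)]{GaspRah} under the same substitution, rewritten in the van de Bult--Rains notation \eqref{topzero}, \eqref{botzero}; and that \eqref{Gmt0} follows from $\psi\mapsto-\psi$. What you do instead is re-derive the underlying Gasper--Rahman expansions from scratch: the contour-integral formulation, the separation of the ${\bf d}$-pole towers (inside $|z|=1$) from the ${\bf c}$-pole towers (outside) by the hypotheses $|d_l|<1/\sigma$, $|c_k|<\sigma/|t|$, the simple-pole conditions $d_l/d_{l'},\,td_lc_k\notin\Omega_q$, and the residue summation assembling into \eqref{Gmt1}, with \eqref{Gmt2} obtained by the symmetry or by summing the exterior residues; this is precisely the argument by which \cite[(4.10.5-6)]{GaspRah} is established in the literature (Slater, and Gasper--Rahman \S 4.9--4.10), so you are in effect re-proving the cited result rather than invoking it. What the paper's route buys is brevity and inherited rigor: the delicate step you correctly flag as the principal obstacle --- justifying that the integral equals the infinite residue series despite the essential singularity at $z=0$, together with the convergence bookkeeping tied to $D\ge B$ (resp.\ $C\ge A$) --- is outsourced to \cite{GaspRah}, and the $t,\sigma$-generalization comes for free because it is a mere substitution in the integrand. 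What your route buys is self-containedness and transparency about where each hypothesis enters; in particular your treatment of \eqref{Gmt0} via $z\mapsto\sigma^2/(tz)$ plus the annulus-deformation argument is arguably more careful than the paper's bare ``$\psi\mapsto-\psi$'', since inversion alone yields $G_{-m,t}({\bf b},{\bf a},{\bf d},{\bf c};t/\sigma,q)$ and one genuinely needs $|c_k|,|d_l|<\min\{1/\sigma,\sigma/|t|\}$ to deform back to the same $\sigma$, exactly as you observe.
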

}
{
\begin{proof}
{
We obtain the integral expression for 
$G_{m,t}$ \eqref{gascardeq} by 
starting with \cite[(4.9.3)]{GaspRah}
and replacing the sets of parameters 
${\bf a}$, ${\bf b}$, ${\bf c}$, ${\bf d}$ with ${\bf a} t/\sigma$, ${\bf b} \sigma$,
${\bf c} t/\sigma$, ${\bf d} \sigma$.
The relation \eqref{Gmt0} follows
by replacing $\psi$ with $-\psi$
in \eqref{gascardeq}.  
To produce \eqref{Gmt1} and \eqref{Gmt2},
in Gasper \& Rahman \cite[(4.10.5-6)]{GaspRah}, make
the above parameter replacements
and use the van de Bult--Rains notation \eqref{topzero}, \eqref{botzero}.
Note that \eqref{Gmt1}, \eqref{Gmt2}
reduce to \cite[(4.10.5-6)]{GaspRah} 
when $t = \sigma = 1.$ As mentioned
in \cite[\S 4.9]{GaspRah}, these integrals
were used in Slater \cite[Chapter 5]{Slater66} with $m=0,1$.
}
\end{proof}
}
{
\begin{rem}
Note that in the case where the arguments of the basic hypergeometric functions in 
\eqref{Gmt1}, \eqref{Gmt2} are greater
than unity, the integral representations
for $G_{m,t}$, when convergent, may provide an
analytic continuation for these basic
hypergeometric functions.
{
For the integrals in \eqref{gascardeq} with respect to the variable of integration $\psi$ the line of integration from $-\pi$ to $\pi$ in the $\psi$-plane would have to be replaced by a suitably deformed line in the $\psi$-plane separating the sequences of poles that have infinitely many poles in the upper half $\psi$-plane from those with infinitely many poles in the lower half $\psi$-plane.}
\end{rem}
}


\begin{rem} 
{
Observe that in the case where \eqref{gascardeq} can be written as \eqref{Gmt1} or \eqref{Gmt2} 
(e.g., $m\in\mathbb Z$)
then 
$G_{m,t}$ 
does not depend on $\sigma$.
}
\end{rem}

\subsection{Argument $q$ applications of Theorem \ref{gascard}}

Note that when identifying integral representations for 
basic hypergeometric series, Theorem \ref{gascard}
is extremely useful. However, in applications
even though one may use it to identify
the parameters of a symmetric sum of
basic hypergeometric functions, the restriction
on the argument is often problematic. On the
other hand, with the special choice of parameters
given in the following corollaries which 
leads to an argument $q$, the ability
to tie parameters to specific basic hypergeometric
functions is greatly enhanced.
Here we present some generalized results 
which gives the symmetric sum of 
two terms each containing a basic hypergeometric function with argument $q$.

\begin{thm}\label{gascaru}
Let {$q\in\CCdag$,} ${\bf a}:=\{a_1,\ldots,a_A\}$, 
${\bf c}:=\{c_1,\ldots,c_C\}$,
be sets of complex numbers with cardinality 
$A,C\in\mathbb N_0$ (not {both} zero) respectively, 
${\bf d}:=\{d_1,d_2\}$, 
$c_kd_l\not\in\Omega_q$,
$z=\expe^{i\psi}$,
$\sigma\in(0,\infty)$, $d_1, d_2\in\CCast$,
such that
$|c_k|<\sigma$, $|d_1|,|d_2|<1/\sigma$, 
for any 
$c_k\in {\bf c}$.
Define
\begin{eqnarray}
&&
\hspace{-0.7cm}H({\bf a},{\bf c},{\bf d};q):=\II{d_1\boro{;}d_2}
\frac{(d_1{\bf a};q)_\infty}
{\left(\frac{d_2}{d_1},d_1{\bf c};q\right)_{\!\infty}}
\!\qphyp{C}{A+1}{C-A-2}
{d_1{\bf c}}
{d_1{\bf a},q d_1/d_2}
{q,q}\\
&&\hspace{-0.5cm}
=\!
\frac{(d_1{\bf a};q)_\infty}
{\left(\frac{d_2}{d_1},d_1{\bf c};q\right)_{\!\infty}}
\!\qphyp{C}{A+1}{C-A-2}
{d_1{\bf c}}
{d_1{\bf a},q d_1/d_2}
{q,q}\!+\!
\frac{(d_2{\bf a};q)_\infty}
{\left(\frac{d_1}{d_2},d_2{\bf c};q\right)_{\!\infty}}
\!\qphyp{C}{A+1}{C-A-2}
{d_2{\bf c}}
{d_2{\bf a},q d_2/d_1}
{q,q}\!,\label{Hdefn}
\end{eqnarray}
where $d_l/d_{l'}\not\in\Omega_q$, $l\ne l'$, and if $C\ge A+2$,
\begin{eqnarray}
&&\hspace{-1.7cm}J({\bf a},{\bf c},{\bf d};f,q):=
\sum_{k=1}^C
\frac{
{\vartheta(fc_kd_1,\frac{f}{c_kd_2};q)}
({\bf a}/c_k
;q)_\infty}
{(c_k{\bf d},
{\bf c}_{[k]}/c_k;q)_\infty}\nonumber\\
&&\hspace{3.9cm}\times\qhyp{A+2}{C-1}
{c_k{\bf d},
qc_k/{\bf a}}{qc_k/{\bf c}_{[k]}}
{q,\frac{q(qc_k)^{C-A-2}a_1\cdots a_A}
{d_1d_2c_1\cdots c_C}},
\label{Jdefn}
\end{eqnarray}
where $c_k/c_{k'}\not\in\Omega_q$, $k\ne k'$, 
and ${}_{A+2}\phi_{C-1}$ is convergent
for $C=A+2$ if 
$|qa_1\cdots a_A|<|d_1d_2c_1\cdots c_C|$,
and is an entire function if $C>A+2$.
Then
\begin{eqnarray}
&&\hspace{-1.6cm}\int_{-\pi}^\pi
\frac{((fd_1,\frac{q}{f}d_2)
\frac{\sigma}{z}, 
(\frac{f}{d_2},\frac{q}{fd_1},{\bf a})\frac{z}{\sigma};q)_\infty}
{((d_1,d_2)\frac{\sigma}{z}, {\bf c}\frac{z}{\sigma};q)_\infty}
{\mathrm d}{\psi}\label{qqform}
=\frac{2\pi
{\vartheta(f,f\frac{d_1}{d_2};q)}
}{(q;q)_\infty}
H({\bf a},{\bf c},{\bf d};q)
\\
&&\hspace{4.96cm}=\frac{2\pi}{(q;q)_\infty}J({\bf a},{\bf c},{\bf d};f,q),\quad (C\ge A+2)\moro{,}
\label{jjform}
\end{eqnarray}
\moro{and none of the arguments of the modified theta functions are equal to some $q^m$, $m\in{\mathbb Z}$.}
\end{thm}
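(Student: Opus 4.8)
The plan is to recognize the integral in \eqref{qqform} as a single instance of $G_{m,t}$ from Theorem \ref{gascard}, and then to read off the two claimed right-hand sides from the two evaluations \eqref{Gmt1} and \eqref{Gmt2}. Concretely, I would set $m=0$ and $t=1$ and apply Theorem \ref{gascard} with the parameter sets
\[
\widehat{\bf a}:=\left\{\frac{f}{d_2},\frac{q}{fd_1}\right\}\cup{\bf a},\qquad \widehat{\bf b}:=\left\{fd_1,\frac{q}{f}d_2\right\},\qquad \widehat{\bf c}:={\bf c},\qquad \widehat{\bf d}:=\{d_1,d_2\},
\]
of cardinalities $A+2,\,2,\,C,\,2$ respectively. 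With these choices the integrand of \eqref{gascardeq} matches that of \eqref{qqform} term by term, and the weight $(\sqrt t/\sigma)^m=1$, so $\int_{-\pi}^\pi(\cdots)\,{\mathrm d}\psi=\frac{2\pi}{(q;q)_\infty}G_{0,1}$. The hypotheses $|c_k|<\sigma$, $|d_l|<1/\sigma$, and $c_kd_l\not\in\Omega_q$ are exactly the $t=1$ specializations of the conditions required in Theorem \ref{gascard}.

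For the first equality I would invoke \eqref{Gmt1}, which is available since $\widehat D=\widehat B=2$; the sum then runs over $d_1,d_2$ and is manifestly the symmetric sum in $d_1,d_2$ of \eqref{Hdefn}. Two features need checking. First the argument: because $\widehat D=\widehat B$ the factor $(qd_k)^{\widehat D-\widehat B}$ is unity, and $\prod\widehat{\bf b}=(fd_1)(\frac{q}{f}d_2)=qd_1d_2$, so the argument collapses to $\prod\widehat{\bf b}/(d_1d_2)=q$, matching $H$. Second, the ${}_{C+2}\phi_{A+3}$ degenerates: the pair $\{q/f,\,fd_1/d_2\}$ arising from $qd_k/\widehat{\bf b}$ (top) coincides with two entries of $d_k\widehat{\bf a}$ (bottom), leaving the ${}_{C}\phi_{A+1}$ of \eqref{Hdefn}. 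For the prefactor I would cancel the common infinite products $(d_k{\bf a},d_k{\bf c};q)_\infty$ and $(d_{l}/d_{k};q)_\infty$ already present in $H$, and then use $\vartheta(x;q)=(x,q/x;q)_\infty$ to recognize the residual factor $(f,q/f,fd_1/d_2,qd_2/(fd_1);q)_\infty=\vartheta(f,f\frac{d_1}{d_2};q)$; since this theta is the same for both $k$, pulling it out gives $G_{0,1}=\vartheta(f,f\frac{d_1}{d_2};q)\,H$, which is \eqref{qqform}.

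For the second equality I would instead invoke \eqref{Gmt2}, whose hypothesis $\widehat C\ge\widehat A$ is precisely $C\ge A+2$; the sum now runs over $c_1,\ldots,c_C$. Here $\prod\widehat{\bf a}=\frac{q}{d_1d_2}a_1\cdots a_A$, so the argument becomes $\frac{q(qc_k)^{C-A-2}a_1\cdots a_A}{d_1d_2\,c_1\cdots c_C}$, as in \eqref{Jdefn}. The cancelling pair is now $\{fc_kd_1,\,qc_kd_2/f\}$, which appears both among $qc_k/\widehat{\bf a}$ (top) and as $c_k\widehat{\bf b}$ (bottom), reducing the ${}_{A+4}\phi_{C+1}$ to the ${}_{A+2}\phi_{C-1}$ of \eqref{Jdefn}. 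The prefactor leaves over $(fc_kd_1,\,q/(fc_kd_1),\,f/(c_kd_2),\,qc_kd_2/f;q)_\infty=\vartheta(fc_kd_1,\frac{f}{c_kd_2};q)$, which are exactly the theta factors multiplying each summand of $J$; hence $G_{0,1}=J$, giving \eqref{jjform}.

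The genuinely delicate part is the bookkeeping in these two reductions—tracking which entries of $d_k\widehat{\bf a}$ (resp.\ $c_k\widehat{\bf b}$) cancel against $qd_k/\widehat{\bf b}$ (resp.\ $qc_k/\widehat{\bf a}$), and verifying that the residual infinite products reassemble into the stated modified theta functions via $\vartheta(x;q)=(x,q/x;q)_\infty$. The convergence assertions transfer directly: for \eqref{Gmt1} the case $\widehat D=\widehat B$ requires $|\prod\widehat{\bf b}|<|d_1d_2|$, i.e.\ $|q|<1$, so the ${}_C\phi_{A+1}$ in $H$ always converge; for \eqref{Gmt2} the boundary case $\widehat C=\widehat A$ (that is $C=A+2$) requires $|\prod\widehat{\bf a}|<|c_1\cdots c_C|$, which is $|qa_1\cdots a_A|<|d_1d_2c_1\cdots c_C|$, exactly as stated for $J$. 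Finally, the stipulation that no theta argument equal a power $q^m$ ensures $\vartheta(f,f\frac{d_1}{d_2};q)\ne0$, which is what legitimizes factoring this common theta out of $G_{0,1}$ in the $H$-evaluation.
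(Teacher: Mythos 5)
Your proposal is correct and takes essentially the same route as the paper's proof: both apply Theorem \ref{gascard} with $m=0$, $t=1$, identify the integrand of \eqref{qqform} with that of \eqref{gascardeq}, and then read \eqref{qqform} off \eqref{Gmt1} and \eqref{jjform} off \eqref{Gmt2}, using the two top/bottom parameter cancellations, the collapse of the argument to $q$, and the stated convergence conditions. Your explicit bookkeeping of the parameter sets and the reassembly of the residual infinite products into the modified theta functions simply fills in the details that the paper's terser proof leaves implicit.
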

{
\begin{proof}
Starting with \eqref{gascardeq}, \eqref{Gmt1}
with $m=0$, $t=1$,
and substituting the 
parameters as in the integrand of 
\eqref{qqform}, two of the numerator parameters
cancel with two of the denominator parameters
and the argument of the basic hypergeometric 
function reduces to $q$. Noting that the nonterminating basic hypergeometric series are either of the form ${}_C\phi_{C-1}$ or ${}_{A+2}\phi_{A+1}$, depending on whether $C-A-2$ is negative or positive respectively,
the series is convergent for
$|q|<1$, and this produces the right-hand side of \eqref{qqform}.
One produces \eqref{jjform} by starting with
\eqref{gascardeq}, \eqref{Gmt2} with $m=0$, 
$t=1$ and using the convergence properties
of the nonterminating basic hypergeometric series described
in Theorem \ref{gascard}.
This completes the proof.
\end{proof}
}

{
\begin{cor}
Let {$q\in\CCdag$,} ${\bf b}:=\{b_1,\ldots,b_B\}$, 
${\bf d}:=\{d_1,\ldots,d_D\}$,
be sets of complex numbers with cardinality 
$B,D\in\mathbb N_0$ (not both zero) respectively, 
${\bf c}:=\{c_1,c_2\}$, 
$z=\expe^{i\psi}$,
$\sigma\in(0,\infty)$, $c_1, c_2\in\CCast$,
$f\in \CCast\setminus\{1\}$, 
such that
$|d_l|<1/\sigma$, $|c_1|,|c_2|<\sigma$, 
for any 
$d_l\in {\bf d}$.
Then
\begin{eqnarray}
&&\hspace{-1.6cm}\int_{-\pi}^\pi
\frac{(
(\frac{f}{c_2},\frac{q}{fc_1},{\bf b})\frac{\sigma}{z},
(fc_1,\frac{q}{f}c_2)\frac{z}{\sigma}
;q)_\infty}
{({\bf d}\frac{\sigma}{z},
(c_1,c_2)\frac{z}{\sigma} 
;q)_\infty}
{\mathrm d}{\psi}\label{qqform2}
=\frac{2\pi
{\vartheta(f,f\frac{c_1}{c_2};q)}
}{(q;q)_\infty}
H({\bf b},{\bf d},{\bf c};q)
\\
&&\hspace{4.85cm}=\frac{2\pi}{(q;q)_\infty}J({\bf b},{\bf d},{\bf c};f,q),\quad (D\ge B+2),
\label{jjform2}
\end{eqnarray}
and ${}_{B+2}\phi_{D-1}$ in $J$ is convergent
for $D=B+2$ if 
$|qb_1\cdots b_B|<|c_1c_2d_1\cdots d_D|$,
and is an entire function if $D>B+2$\moro{,}
\moro{and none of the arguments of the modified theta functions are equal to some $q^m$, $m\in{\mathbb Z}$.}
\end{cor}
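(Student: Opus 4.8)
The plan is to obtain this Corollary as the mirror image of Theorem~\ref{gascaru} under the reflection $\psi\mapsto-\psi$ of the integration contour, which is exactly the $m=0$ incarnation of the symmetry \eqref{Gmt0} of $G_{m,t}$. Since the interval $[-\pi,\pi]$ is symmetric, $\int_{-\pi}^\pi g(\psi)\,{\mathrm d}\psi=\int_{-\pi}^\pi g(-\psi)\,{\mathrm d}\psi$ for every integrable integrand $g$, and under $\psi\mapsto-\psi$ (i.e.\ $z=\expe^{i\psi}\mapsto 1/z$) each factor $(\alpha\frac{\sigma}{z};q)_\infty$ turns into $(\alpha\frac{\tilde z}{\tilde\sigma};q)_\infty$ while each factor $(\beta\frac{z}{\sigma};q)_\infty$ turns into $(\beta\frac{\tilde\sigma}{\tilde z};q)_\infty$, where $\tilde z=\expe^{i\psi}$ and $\tilde\sigma:=1/\sigma$. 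Thus the reflection interchanges the $\frac{\sigma}{z}$ and $\frac{z}{\sigma}$ factors at the single cost of replacing $\sigma$ by its reciprocal.

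Applying this to the integrand of \eqref{qqform2}, the numerator block $(\frac{f}{c_2},\frac{q}{fc_1},{\bf b})\frac{\sigma}{z}$ moves onto the $\frac{\tilde z}{\tilde\sigma}$ side and $(fc_1,\frac{q}{f}c_2)\frac{z}{\sigma}$ onto the $\frac{\tilde\sigma}{\tilde z}$ side, while in the denominator ${\bf d}\frac{\sigma}{z}\mapsto{\bf d}\frac{\tilde z}{\tilde\sigma}$ and $(c_1,c_2)\frac{z}{\sigma}\mapsto(c_1,c_2)\frac{\tilde\sigma}{\tilde z}$. The outcome is precisely the integrand of \eqref{qqform} evaluated at $\tilde\sigma$ under the relabeling ${\bf a}\mapsto{\bf b}$, ${\bf c}\mapsto{\bf d}$, $\{d_1,d_2\}\mapsto\{c_1,c_2\}$, with $f$ left untouched. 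I would then check the hypotheses: Theorem~\ref{gascaru} requires $|c_k|<\tilde\sigma$ and $|d_1|,|d_2|<1/\tilde\sigma$, and under this relabeling together with $\tilde\sigma=1/\sigma$ these read $|d_l|<1/\sigma$ and $|c_1|,|c_2|<\sigma$, exactly the standing hypotheses of the Corollary; the side conditions $c_kd_l\notin\Omega_q$, $d_l/d_{l'}\notin\Omega_q$, and the requirement that no argument of a modified theta function equal $q^m$ transfer verbatim.

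Invoking Theorem~\ref{gascaru} on the reflected integrand now yields both right-hand sides directly: the prefactor ${\vartheta(f,f\frac{d_1}{d_2};q)}$ relabels to ${\vartheta(f,f\frac{c_1}{c_2};q)}$, the factor $H({\bf a},{\bf c},{\bf d};q)$ relabels to $H({\bf b},{\bf d},{\bf c};q)$, and $J({\bf a},{\bf c},{\bf d};f,q)$ relabels to $J({\bf b},{\bf d},{\bf c};f,q)$, reproducing \eqref{qqform2} and \eqref{jjform2}. The validity condition $C\ge A+2$ for the $J$-representation, and the convergence criterion $|qa_1\cdots a_A|<|d_1d_2c_1\cdots c_C|$ for the ${}_{A+2}\phi_{C-1}$ inside \eqref{Jdefn}, become $D\ge B+2$ and $|qb_1\cdots b_B|<|c_1c_2d_1\cdots d_D|$, with the entire-function case $C>A+2$ passing to $D>B+2$, exactly as claimed. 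A self-contained alternative, bypassing the reflection, is to rerun the proof of Theorem~\ref{gascaru} but feed the dual parameter assignment into Theorem~\ref{gascard}: writing the integral as $G_{0,1}$ with $\frac{\sigma}{z}$-numerator set $\{\frac{f}{c_2},\frac{q}{fc_1}\}\cup{\bf b}$, $\frac{z}{\sigma}$-numerator set $\{fc_1,\frac{q}{f}c_2\}$, $\frac{\sigma}{z}$-denominator set ${\bf d}$ and $\frac{z}{\sigma}$-denominator set $\{c_1,c_2\}$, one obtains \eqref{qqform2} from \eqref{Gmt2} (summation over the pair $\{c_1,c_2\}$, admissible since $2\ge2$) and \eqref{jjform2} from \eqref{Gmt1} (summation over ${\bf d}$, admissible since $D\ge B+2$). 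In either route the only real work is the bookkeeping of the reciprocal $\sigma\mapsto1/\sigma$ and the interchange of the parameter sets; there is no genuine analytic obstacle, since the substantive content already resides in Theorem~\ref{gascaru} (equivalently Theorem~\ref{gascard}).
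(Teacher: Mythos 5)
Your proposal is correct, and in fact it contains two proofs. Your ``self-contained alternative'' is precisely the paper's own argument: the paper proves \eqref{qqform2} by applying Theorem~\ref{gascard} with $m=0$, $t=1$, using the expansion \eqref{Gmt2} (summation over the pair $\{c_1,c_2\}$, where the paired numerator parameters $fc_1$, $\frac{q}{f}c_2$ force the two cancellations that reduce the argument to $q$ and leave a ${}_D\phi_{D-1}$ or ${}_{B+2}\phi_{B+1}$), and proves \eqref{jjform2} from \eqref{Gmt1} (summation over ${\bf d}$, admissible for $D\ge B+2$). Your primary route is genuinely different and arguably cleaner: reflecting $\psi\mapsto-\psi$, so $z\mapsto 1/z$, interchanges the $\frac{\sigma}{z}$ and $\frac{z}{\sigma}$ blocks at the cost of $\sigma\mapsto 1/\sigma$, and this exhibits the corollary as literally the mirror image of Theorem~\ref{gascaru} under the relabelling ${\bf a}\mapsto{\bf b}$, ${\bf c}\mapsto{\bf d}$, $\{d_1,d_2\}\mapsto\{c_1,c_2\}$ with $f$ fixed --- i.e.\ the $m=0$, $t=1$ instance of the symmetry \eqref{Gmt0}. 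Your bookkeeping checks out: the hypotheses $|d_l|<1/\sigma$, $|c_1|,|c_2|<\sigma$ are exactly what Theorem~\ref{gascaru} demands at $\tilde\sigma=1/\sigma$, and the conditions $C\ge A+2$ and $|qa_1\cdots a_A|<|d_1d_2c_1\cdots c_C|$ relabel to $D\ge B+2$ and $|qb_1\cdots b_B|<|c_1c_2d_1\cdots d_D|$ as claimed. What the paper's route buys is that both expansions come straight from Theorem~\ref{gascard} without invoking Theorem~\ref{gascaru} as a black box; what your reflection buys is economy and a conceptual explanation of why the theorem and the corollary are dual statements rather than two independent computations.
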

}
{
\begin{proof}
As in the proof of {Theorem} \ref{gascaru},
start with Theorem \ref{gascard} with $m=0$, $t=1$.
Use \eqref{gascardeq}, \eqref{Gmt2},
and substitute the 
parameters as in the integrand of 
\eqref{qqform2}. Noting that the nonterminating basic hypergeometric series are either of the form ${}_D\phi_{D-1}$ or ${}_{B+2}\phi_{B+1}$, depending on whether $D-B-2$ is negative or positive respectively,
the series is convergent for
$|q|<1$, and this produces the right-hand side
of \eqref{qqform2}.
One produces \eqref{jjform2} by starting with
\eqref{qqform2} with
\eqref{gascardeq}, \eqref{Gmt1} and using the convergence properties
of nonterminating basic hypergeometric series.
This completes the proof.
\end{proof}
}

\noindent{
\begin{thm}
\label{qintqone}
Let $H$, ${\bf a}$, ${\bf c}$, ${\bf d}$, $q$ be defined
as in {Theorem} \ref{gascaru} and $s\in\CCast$,
\moro{$d_2{d_1}^{-1}\ne q^m$, $m\in\Z$}. Then
\begin{eqnarray}
&&\hspace{-1cm}H({\bf a},{\bf c},{\bf d};q)=\frac{\sqrt{\frac{d_2}{d_1}}}
{(1-q)s(q;q)_\infty
{\vartheta(\frac{d_2}{d_1};q)}
}
\int_{s\sqrt{\frac{d_2}{d_1}}}^{s\sqrt{\frac{d_1}{d_2}}}
\frac{((q\sqrt{d_1/d_2
},q\sqrt{
d_2/d_1
},{\bf a}\sqrt{d_1d_2})\frac{u}{s};q)_\infty}
{({\bf c}\sqrt{d_1d_2}\frac{u}{s};q)_\infty}{\mathrm d}_qu,
\label{qintH}
\end{eqnarray}
which is symmetric in $\{d_1,d_2\}$, as in
\eqref{qJacsym}.
\end{thm}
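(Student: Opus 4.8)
The plan is to start from the right-hand side of \eqref{qintH} and reduce it to $H({\bf a},{\bf c},{\bf d};q)$ by evaluating the Jackson $q$-integral explicitly. Writing $f(u;q)$ for the integrand and setting the lower and upper limits $\alpha:=s\sqrt{d_2/d_1}$ and $\beta:=s\sqrt{d_1/d_2}$, I would apply the definition \eqref{qint}, so that the $q$-integral becomes $(1-q)\beta\sum_{n\ge0}q^nf(q^n\beta;q)-(1-q)\alpha\sum_{n\ge0}q^nf(q^n\alpha;q)$. The symmetric form \eqref{qJacsym} is the more transparent one here, since the two endpoints $\alpha$ and $\beta$ will correspond respectively to the $d_2$- and $d_1$-summands, so that the endpoint symmetrization in \eqref{qJacsym} is exactly the $\II{d_1;d_2}$ appearing in \eqref{Hdefn}; this is what makes the claimed $\{d_1,d_2\}$-symmetry manifest.

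First I would evaluate the $\beta$-sum. Substituting $u=q^n\beta$ gives $u/s=q^n\sqrt{d_1/d_2}$, and the three crucial simplifications $\sqrt{d_1 d_2}\,\sqrt{d_1/d_2}=d_1$, $q\sqrt{d_1/d_2}\cdot\sqrt{d_1/d_2}=qd_1/d_2$ and $q\sqrt{d_2/d_1}\cdot\sqrt{d_1/d_2}=q$ turn the numerator arguments into $q^{n+1}$, $q^{n+1}d_1/d_2$ and $d_1{\bf a}q^n$, and the denominator argument into $d_1{\bf c}q^n$. Peeling off finite $q$-shifted factorials via $(xq^n;q)_\infty=(x;q)_\infty/(x;q)_n$ leaves the infinite products $(q;q)_\infty(qd_1/d_2;q)_\infty(d_1{\bf a};q)_\infty/(d_1{\bf c};q)_\infty$ times the series $\sum_{n\ge0}(d_1{\bf c};q)_n q^n/(q,qd_1/d_2,d_1{\bf a};q)_n$. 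This last series is precisely the van de Bult--Rains series $\qphyp{C}{A+1}{C-A-2}{d_1{\bf c}}{d_1{\bf a},qd_1/d_2}{q,q}$ of \eqref{Hdefn}: with $r+1=C$ numerator and $s=A+1$ denominator parameters and correction $m=C-A-2$, the exponent $s-r+m$ of the $(-1)^kq^{\binom{k}{2}}$ factor implicit in \eqref{2.11} and \eqref{topzero}--\eqref{botzero} vanishes, matching the absence of any such factor in the plain sum. Recognizing $(qd_1/d_2;q)_\infty=\vartheta(d_2/d_1;q)/(d_2/d_1;q)_\infty$ from \eqref{theta} then collapses the $\beta$-sum to $(q;q)_\infty\vartheta(d_2/d_1;q)$ times the first summand of $H$. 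The $\alpha$-sum is the image of this computation under $d_1\leftrightarrow d_2$ and yields $(q;q)_\infty\vartheta(d_1/d_2;q)$ times the second summand of $H$.

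It remains to combine the two endpoint contributions into $H$ with the correct prefactor, and this is where the bookkeeping is delicate. The key relation is $\vartheta(d_1/d_2;q)=-(d_1/d_2)\,\vartheta(d_2/d_1;q)$, which I would obtain from \eqref{infneg} together with the observation $\vartheta(1/x;q)=\vartheta(qx;q)$. Using it to rewrite $\vartheta(d_1/d_2;q)$ in the $\alpha$-term, and simplifying $\alpha\cdot(d_1/d_2)=s\sqrt{d_2/d_1}\,(d_1/d_2)=s\sqrt{d_1/d_2}=\beta$, one finds that both endpoint contributions share the common factor $(1-q)s\sqrt{d_1/d_2}\,(q;q)_\infty\,\vartheta(d_2/d_1;q)$ and that the bracketed remainder is exactly the sum of the two summands of $H$. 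Solving for $H$ and using $1/\sqrt{d_1/d_2}=\sqrt{d_2/d_1}$ reproduces the stated prefactor $\sqrt{d_2/d_1}/\big((1-q)s(q;q)_\infty\vartheta(d_2/d_1;q)\big)$. The main obstacle is precisely this final reconciliation of the square-root and sign factors coming from the two limits with the single theta function in the denominator of \eqref{qintH}; everything else is the routine $(xq^n;q)_\infty$ splitting. Finally, the symmetry in $\{d_1,d_2\}$ follows because the integrand is invariant under $d_1\leftrightarrow d_2$ while the limits interchange, negating the $q$-integral, and the prefactor also changes sign under the theta-function relation, leaving the product invariant, in accordance with \eqref{qJacsym}.
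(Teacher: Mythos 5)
Your proposal is correct and follows essentially the same route as the paper's own (much terser) proof: expand the Jackson $q$-integral via \eqref{qint}, split the shifted factorials $(xq^n;q)_\infty=(x;q)_\infty/(x;q)_n$ (the paper cites \eqref{infPochdefnb} for this) to expose the argument-$q$ series of \eqref{Hdefn}, then use the sign relation \eqref{infone} to merge the two endpoint contributions into $H$. Your write-up simply supplies the bookkeeping—the vdBR identification, the theta relation $\vartheta(d_1/d_2;q)=-(d_1/d_2)\vartheta(d_2/d_1;q)$, and $\alpha\,d_1/d_2=\beta$—that the paper leaves implicit.
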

}
{
\begin{proof}
Start with the $q$-integral on the right-hand side
of \eqref{qintH} using the definition \eqref{qint}.
Replacing the Pochhammer symbols using \eqref{infPochdefnb} identifies 
the argument $q$ basic hypergeometric series in question. 
Then pulling out common factors, using \eqref{infone}
and comparing with \eqref{Hdefn} derives \eqref{qintH}. 
The symmetry in $\{d_1,d_2\}$ is clear from \eqref{Hdefn}. 
This completes the proof.
\end{proof}
}

{
Note that from {Theorem} \ref{gascaru} and Theorem
\ref{qintqone}, we arrive at an interesting relation
of a definite integral with a $q$-integral.
}
{
\begin{cor}\label{Int-Qint}
Let ${\bf a}$, ${\bf c}$, ${\bf d}$, $q$, $s$, $f$, $\sigma$, 
$z=\expe^{i\psi}$ be defined
as in {Theorem} \ref{gascaru} and Theorem \ref{qintqone}. Then
\begin{eqnarray}
&&\hspace{-0.6cm}\int_{-\pi}^\pi
\frac{((fd_1,\frac{q}{f}d_2)
\frac{\sigma}{z}, 
(\frac{f}{d_2},\frac{q}{fd_1},{\bf a})\frac{z}{\sigma};q)_\infty}
{((d_1,d_2)\frac{\sigma}{z}, {\bf c}\frac{z}{\sigma};q)_\infty}
{\mathrm d}{\psi}
=\frac{2\pi\sqrt{\frac{d_2}{d_1}}
{\vartheta(f,f\frac{d_1}{d_2};q)}
}{(1-q)s(q,q;q)_\infty
{\vartheta(\frac{d_2}{d_1};q)}
}\nonumber\\
&&\hspace{5cm}\times\int_{s\sqrt{\frac{d_2}{d_1}}}^{s\sqrt{\frac{d_1}{d_2}}}
\frac{((q\sqrt{
d_1/d_2
},q\sqrt{
d_2/d_1
},{\bf a}\sqrt{d_1d_2})\frac{u}{s};q)_\infty}
{({\bf c}\sqrt{d_1d_2}\frac{u}{s};q)_\infty}{\mathrm d}_qu.
\end{eqnarray}
\end{cor}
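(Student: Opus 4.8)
The plan is to derive this corollary as a direct composition of Theorem \ref{gascaru} and Theorem \ref{qintqone}, exploiting the fact that its hypotheses on $\mathbf{a}$, $\mathbf{c}$, $\mathbf{d}$, $q$, $s$, $f$, $\sigma$ are inherited verbatim from both. First I would invoke equation \eqref{qqform} of Theorem \ref{gascaru}, which already evaluates the definite integral on the left-hand side as $\frac{2\pi\vartheta(f,f d_1/d_2;q)}{(q;q)_\infty}H(\mathbf{a},\mathbf{c},\mathbf{d};q)$, valid under the domain restrictions $|c_k|<\sigma$, $|d_1|,|d_2|<1/\sigma$ and the non-resonance conditions $c_k d_l,\,d_l/d_{l'}\notin\Omega_q$ carried into the corollary.

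Next I would substitute the Jackson $q$-integral representation of $H(\mathbf{a},\mathbf{c},\mathbf{d};q)$ supplied by equation \eqref{qintH} of Theorem \ref{qintqone}; this is licensed by the extra hypothesis $d_2/d_1\ne q^m$, $m\in\mathbb{Z}$, again already assumed. After this substitution the proof collapses to a consolidation of scalar prefactors, since the $q$-integral itself is transcribed unchanged.

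Finally I would collect the constants. The prefactor $\frac{2\pi\vartheta(f,f d_1/d_2;q)}{(q;q)_\infty}$ from \eqref{qqform} multiplies the prefactor $\frac{\sqrt{d_2/d_1}}{(1-q)s(q;q)_\infty\vartheta(d_2/d_1;q)}$ from \eqref{qintH}; the two copies of $(q;q)_\infty$ merge into $(q,q;q)_\infty$ via the product convention $(a_1,\ldots,a_k;q)_b:=(a_1;q)_b\cdots(a_k;q)_b$, and the theta factors combine to give $\vartheta(f,f d_1/d_2;q)/\vartheta(d_2/d_1;q)$, reproducing exactly the constant $\frac{2\pi\sqrt{d_2/d_1}\,\vartheta(f,f d_1/d_2;q)}{(1-q)s(q,q;q)_\infty\vartheta(d_2/d_1;q)}$ displayed in the statement, with the $q$-integral on $[s\sqrt{d_2/d_1},\,s\sqrt{d_1/d_2}]$ appearing verbatim.

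There is no genuine computational obstacle here: the entire content is bookkeeping of prefactors and a chaining of two previously established identities. The only points that deserve care are verifying that the two theorems' hypotheses are mutually compatible and that every theta-function denominator is nonvanishing—both guaranteed by the stipulations that the relevant arguments avoid $q^m$, $m\in\mathbb{Z}$, together with the domain restrictions $|c_k|<\sigma$ and $|d_l|<1/\sigma$. Once these compatibility checks are recorded, \eqref{qqform} and \eqref{qintH} chain together directly to yield the asserted equality.
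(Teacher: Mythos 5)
Your proposal is correct and is essentially the paper's own argument: the paper's proof reads simply ``Comparing Theorem \ref{gascaru} with Theorem \ref{qintqone} completes the proof,'' and your chaining of \eqref{qqform} with \eqref{qintH}, followed by the merging of the two $(q;q)_\infty$ factors into $(q,q;q)_\infty$, is exactly that comparison written out in detail.
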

}
{
\begin{proof}
Comparing {Theorem} \ref{gascaru} with Theorem
\ref{qintqone} completes the proof.
\end{proof}
}


A useful consequence of this formula is 
given in \cite[Exercise 4.4]{GaspRah},
which is an application of \eqref{qqform} with $C=4$, {$A=2$}. It takes advantage of 
Bailey's transformation of a very-well-poised ${}_8W_7$ \cite[(17.9.16)]{NIST:DLMF} and is {given} as follows\moro{:}
\begin{eqnarray}
&&\hspace{-0.1cm}\int_{-\pi}^\pi
\frac{((cf,\frac{qd}{f})\frac{\sigma}{z},
(\frac{f}{d},\frac{q}{fc},k,\frac{abcdgh}{k})
\frac{z}{\sigma};q)_\infty}
{((c,d)\frac{\sigma}{z},(a,b,g,h)\frac{z}{\sigma};q)_\infty}{\mathrm d}\psi
=\frac{2\pi
{\vartheta(f,f\frac{c}{d};q)}
(kc,kd,acdg,bcdg,cdgh,\frac{abcdh}{k};q)_\infty}
{(q,ac,ad,bc,bd,cg,dg,ch,dh,kcdg;q)_\infty}
\nonumber\\
&&\hspace{7.5cm}\times\Whyp87{\frac{kcdg}{q}}{cg,dg,\frac{k}{a},\frac{k}{b},\frac{k}{h}}{q,\frac{abcdh}{k}},
\label{fourfour}
\end{eqnarray}
where $z=\expe^{i\psi}$,
$\max(|a|,|b|,|c|,|d|,|g|,|h|)<1$, and $|abcdh|<|k|$.
{Note that if $h=k$ 
and $g\mapsto e$ then \eqref{fourfour}
reduces to Gasper's integral \eqref{Gint}.}

\medskip
{
Using
Theorem \ref{gascard}
one can
derive the following generalization of Rahman's integral \eqref{Rint} 
which does not include the constraint
$\lambda\mu=a_{123456}$.
}
{
\begin{thm}Let ${\bf a}:=\{a_1,\ldots,a_6\}$, $a_1,\ldots,a_6,\lambda,\mu\in\CCast$,
$q\in\CCdag$. Then
\begin{eqnarray}
&
\int_{-\pi}^\pi\frac{(\expe^{\pm 2i\psi},\lambda \expe^{\pm i\psi}
,\mu\expe^{\pm i\psi};q)_\infty}{(
{\bf a}\expe^{\pm i\psi}
;q)_\infty}&{\mathrm d}\psi
=\frac{2\pi}{(q;q)_\infty}
\hspace{-3mm}\II{a_1\boro{;a_2},\ldots,a_6}\hspace{-2mm}
\dfrac{\left(a_1^{-2},a_1\lambda,a_1\mu, \dfrac{\lambda}{a_1},\dfrac{\mu}{a_1};q\right)_\infty}
{\left(a_{12},\ldots,a_{16},\dfrac{a_2}{a_1},\ldots
,\dfrac{a_6}{a_1};q\right)_\infty}\nonumber\\&&
\hspace{1.2cm}
\times
{\Whyp{10}{9}{a_1^2}{a_{12},\ldots,a_{16},\frac{q a_1}{\lambda},\frac{q a_1}{\mu}}
{q,\frac{q\lambda\mu}{a_{123456}}}},
\label{genRah}
\end{eqnarray}
where 
$|q\lambda\mu|<|a_{123456}|{<1}$
{and $\max(|a_1|,\ldots,|a_6|)<1$}.
\end{thm}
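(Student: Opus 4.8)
The plan is to recognize the left-hand integral as the special case $G_{0,1}$ of the master integral in Theorem~\ref{gascard}, after first linearizing the factor $(\expe^{\pm 2i\psi};q)_\infty=(z^{2},z^{-2};q)_\infty$, $z=\expe^{i\psi}$, in powers of $z^{\pm1}$. Applying the quadratic identity \eqref{sq} in the forms $(z^{2};q)_\infty=(\pm z,\pm q^{\frac12}z;q)_\infty$ and $(z^{-2};q)_\infty=(\pm z^{-1},\pm q^{\frac12}z^{-1};q)_\infty$ splits this factor into four factors linear in $z$ and four linear in $z^{-1}$. With $\sigma=t=1$, $m=0$, the integrand then matches the form \eqref{gascardeq} with the numerator parameter sets $\mathbf a=\mathbf b=\{1,-1,q^{\frac12},-q^{\frac12},\lambda,\mu\}$ (cardinality $A=B=6$) and the denominator sets $\mathbf c=\mathbf d=\{a_1,\ldots,a_6\}$ (cardinality $C=D=6$), so that the integral equals $\tfrac{2\pi}{(q;q)_\infty}G_{0,1}$.

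Next I would invoke the sum formula \eqref{Gmt2}, available because $C\ge A$ (here $C=A=6$); summing over $c_k=a_k$, $k=1,\ldots,6$, produces exactly the $\II{a_1;a_2,\ldots,a_6}$ symmetrization of a single pivot term. For the pivot $a_1$, the factor $(a_1\mathbf b;q)_\infty$ contains $(\pm a_1,\pm q^{\frac12}a_1;q)_\infty=(a_1^{2};q)_\infty$ and $(\mathbf a/a_1;q)_\infty$ contains $(\pm a_1^{-1},\pm q^{\frac12}a_1^{-1};q)_\infty=(a_1^{-2};q)_\infty$, again by \eqref{sq}; the $(a_1^{2};q)_\infty$ so produced cancels against the $a_1\cdot a_1=a_1^{2}$ factor inside $(a_1\mathbf d;q)_\infty$, leaving precisely the prefactor $\big(a_1^{-2},a_1\lambda,a_1\mu,\lambda/a_1,\mu/a_1;q\big)_\infty/\big(a_{12},\ldots,a_{16},a_2/a_1,\ldots,a_6/a_1;q\big)_\infty$ of \eqref{genRah}. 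Because $\prod\mathbf a=1\cdot(-1)\cdot q^{\frac12}\cdot(-q^{\frac12})\cdot\lambda\mu=q\lambda\mu$ and $\prod\mathbf c=a_{123456}$, and since $m=0$ with $C=A$, the argument $q^{-m}(qc_k)^{C-A}\prod\mathbf a/\prod\mathbf c$ collapses to the $c_k$-independent value $q\lambda\mu/a_{123456}$ for every term.

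Finally I would reduce the ${}_{12}\phi_{11}$ supplied by \eqref{Gmt2} to the ${}_{10}W_9$ of \eqref{genRah}. Writing out its numerator entries $a_1\mathbf d,\,qa_1/\mathbf a$ and denominator entries $a_1\mathbf b,\,qa_1/\mathbf c_{[1]}$, the parameters $\pm q^{\frac12}a_1$ occur in both lists and cancel; what remains has $b=a_1^{2}$ together with $\pm qa_1$ on top and $\pm a_1$ on bottom, and the seven free parameters $a_{12},\ldots,a_{16},qa_1/\lambda,qa_1/\mu$ paired with their images $qa_1^{2}/(\cdot)$, i.e.\ exactly the very-well-poised pattern \eqref{rpWr} defining $\Whyp{10}{9}{a_1^{2}}{a_{12},\ldots,a_{16},\frac{qa_1}{\lambda},\frac{qa_1}{\mu}}{q,\frac{q\lambda\mu}{a_{123456}}}$. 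Collecting the six pivots gives \eqref{genRah}, and the $C=A$ boundary case of Theorem~\ref{gascard} pins the convergence to $|q\lambda\mu|<|a_{123456}|$. I expect the only real work to be this bookkeeping: tracking all twelve numerator and eleven denominator entries through the two applications of \eqref{sq} and verifying that every surviving parameter falls into place for the very-well-poised pairing; no nontrivial transformation (unlike the ${}_8W_7$ step behind \eqref{fourfour}) is needed, since the reduction is a direct cancellation.
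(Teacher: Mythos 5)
Your proposal is correct and is precisely the paper's own argument: the paper proves \eqref{genRah} by applying Theorem \ref{gascard} (with $m=0$, $t=\sigma=1$) to the left-hand side after splitting $(\expe^{\pm 2i\psi};q)_\infty$ via \eqref{sq}, its remark ``$qa_1^2/(\pm q^{\frac12}a_1)=\pm q^{\frac12}a_1$'' being exactly the cancellation of $\pm q^{\frac12}a_1$ that you use to collapse the ${}_{12}\phi_{11}$ to the very-well-poised ${}_{10}W_9$. You have simply written out in full the bookkeeping that the paper's two-line proof leaves implicit.
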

}
{
\begin{proof}
Starting with the left-hand side of \eqref{genRah}
and applying Theorem \ref{gascard},
{noting $qa_1^2/(\pm q^\frac12 a_1)=\pm q^\frac12 a_1$,}
completes the proof.
\end{proof}
}

{
If we set $\lambda\mu=a_{123456}$ then
\eqref{genRah} specializes to Rahman's
integral \eqref{Rint}. This results in 
the following transformation law for the
symmetrized sum of six {${}_{10}W_{9}$'s}
with argument $q$ being equal to the
symmetrized sum of two ${}_{10}W_9$'s with 
argument $q$.
}

{
\begin{cor}Let ${\bf a}:=\{a_1,\ldots,a_6\}$, $a_1,\ldots,a_6,\lambda,\mu\in\CCast$,
$q\in\CCdag$. Then
\begin{eqnarray}
&&\hspace{0cm}\II{a_1\boro{;a_2},\ldots,a_6}\!\!
\dfrac{\left(a_1^{-2},a_1\lambda,a_1\mu, \dfrac{\lambda}{a_1},\dfrac{\mu}{a_1};q\right)_\infty}
{\left(a_{12},\ldots,a_{16},\dfrac{a_2}{a_1},\ldots
,\dfrac{a_6}{a_1};q\right)_\infty}
{\Whyp{10}{9}{a_1^2}{a_{12},\ldots,a_{16},
\frac{q a_1}
{\lambda},\dfrac{q a_1}{\mu}}{q,q}}\nonumber\\
&&\hspace{3cm}=
2\II{\lambda\boro{;}\mu}
\frac{\left(
\lambda{\bf a},
\dfrac{\mu}{\bf a}
;q\right)_\infty}
{(a_{12},\ldots,a_{56};q)_\infty\left(\lambda^2,\dfrac{\mu}{\lambda};q\right)_\infty}
\Whyp{10}{9}{\frac{\lambda^2}{q}}{
\dfrac{\lambda}{\bf a},
\dfrac{a_{123456}}{q}
}{q,q}.
\end{eqnarray}
\end{cor}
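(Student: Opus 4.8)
The plan is to prove this identity by evaluating a single definite integral in two different ways and equating the results. The natural integral to use is the one appearing on the left-hand side of the generalized Rahman integral \eqref{genRah}, specialized to the balanced case $\lambda\mu=a_{123456}$. Indeed, the corollary is simply the assertion that, under this constraint, the right-hand side of \eqref{genRah} must agree with (twice) the right-hand side of Rahman's integral \eqref{Rint}, since both expressions evaluate the same integrand over overlapping intervals.

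First I would impose $\lambda\mu=a_{123456}$ in \eqref{genRah}. Under this substitution the argument $\frac{q\lambda\mu}{a_{123456}}$ of each ${}_{10}W_9$ on the right of \eqref{genRah} collapses to $q$, so that the integral over $[-\pi,\pi]$ equals $\frac{2\pi}{(q;q)_\infty}$ times the symmetrized sum of six ${}_{10}W_9$'s written on the left of the corollary. Separately, I would evaluate the very same integral via \eqref{Rint}: the integrand depends on $\psi$ only through $\expe^{\pm i\psi}$ and $\expe^{\pm 2i\psi}$ and is therefore invariant under $\psi\mapsto-\psi$, so $\int_{-\pi}^\pi=2\int_0^\pi$. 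Since \eqref{Rint} is stated precisely under the hypothesis $\lambda\mu=a_{123456}$, it applies without modification and gives $\int_0^\pi$ as $\frac{2\pi}{(q,a_{12},\ldots,a_{56};q)_\infty}$ times the symmetrized $\{\lambda,\mu\}$ sum of two ${}_{10}W_9$'s.

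Equating the two evaluations of $\int_{-\pi}^\pi$ and cancelling the common prefactor $\frac{2\pi}{(q;q)_\infty}$ leaves the six-term sum equal to $\frac{2}{(a_{12},\ldots,a_{56};q)_\infty}$ times the two-term $\{\lambda,\mu\}$ sum; the extra factor $2$ is exactly the one displayed on the right of the corollary and originates in the $[-\pi,\pi]$ versus $[0,\pi]$ mismatch between \eqref{genRah} and \eqref{Rint}. Finally, under the constraint one has $\frac{\lambda\mu}{q}=\frac{a_{123456}}{q}$, so the ${}_{10}W_9$ occurring in \eqref{Rint} is verbatim the one written in the corollary, and the two sides coincide. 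I do not anticipate a genuine conceptual obstacle here; the delicate part is purely bookkeeping — keeping the two distinct normalizations $(q;q)_\infty$ and $(q,a_{12},\ldots,a_{56};q)_\infty$ straight and accounting correctly for the factor $2$ — together with checking that the domain hypotheses survive the specialization, namely that $|q\lambda\mu|<|a_{123456}|$ reduces to the automatically valid $|q|<1$, while $|a_{123456}|<1$ and $\max(|a_1|,\ldots,|a_6|)<1$ carry over directly from \eqref{Rint}.
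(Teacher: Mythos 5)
Your proposal is correct and is essentially the paper's own proof: the paper likewise compares \eqref{genRah} under the specialization $\lambda\mu=a_{123456}$ (which turns the ${}_{10}W_9$ arguments into $q$) with Rahman's integral \eqref{Rint}, using the evenness of the integrand in $\psi$ to relate $\int_{-\pi}^{\pi}$ to $2\int_0^{\pi}$, which is exactly your accounting of the factor $2$ and of the two normalizations $(q;q)_\infty$ versus $(q,a_{12},\ldots,a_{56};q)_\infty$. The only detail the paper adds that you omit is the notational observation $qa_1^2/(\pm q^{\frac12}a_1)=\pm q^{\frac12}a_1$, a bookkeeping point about the very-well-poised parameter structure rather than a conceptual step.
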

}
{
\begin{proof}
Comparing \eqref{genRah} to \eqref{Rint} and noting that the integrand 
is an even function of $\psi$,
{and noting $qa_1^2/(\pm q^\frac12 a_1)=\pm q^\frac12 a_1$,}
completes the proof.
\end{proof}
}

{
Using 
Theorem \ref{gascard}
we can find an alternative
expression for the Nassrallah--Rahman integral
\eqref{NRint} as a symmetrized sum of five {${}_{8}W_7$'s}.
}
{
\begin{thm}Let 
${\bf a}:=\{a_1,\ldots,a_5\}$,
$a_1,\ldots,a_5,\lambda,\mu\in\CCast$,
$q\in\CCdag$. Then
\begin{eqnarray}
&
\int_{-\pi}^\pi
\frac{(\expe^{\pm 2i\psi},\lambda \expe^{\pm i\psi};q)_\infty}
{({\bf a}\expe^{\pm i\psi}
;q)_\infty}
{\mathrm d}\psi
=&\frac{2\pi}{(q;q)_\infty}\II{a_1\boro{;a_2},\ldots,a_5}\hspace{-2mm}
\frac{\left(a_1^{-2},a_1\lambda, \dfrac{\lambda}{a_1};q
\right)_\infty}{\left(a_{12},\ldots,a_{15},\dfrac{a_2}
{a_1},\ldots,\dfrac{a_5}{a_1};q\right)_\infty}
\nonumber\\
&&\hspace{1.5cm}
\times{\Whyp{8}{7}{a_1^2}{a_{12},\ldots,a_{15},
\dfrac{q a_1}{\lambda}}{q,\frac{q\lambda}{a_{12345}}}},
\label{altNRint}
\end{eqnarray}
where 
$|q\lambda|<|a_{12345}|{<1}$
{and $\max(|a_1|,\ldots,|a_5|)<1$}.
\end{thm}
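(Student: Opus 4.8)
The plan is to derive \eqref{altNRint} as a direct specialization of Theorem \ref{gascard}, entirely parallel to the proof of \eqref{genRah}: the left-hand side is an integral of the type \eqref{gascardeq} with $m=0$ and $t=1$, and the right-hand side will emerge from \eqref{Gmt2} (note that the parameter $\mu$ listed in the hypotheses does not appear and may be ignored). First I would rewrite the integrand. With $z=\expe^{i\psi}$ one has $(\expe^{\pm 2i\psi};q)_\infty=(z^2,z^{-2};q)_\infty$, and applying \eqref{sq} to each factor gives $(z^2;q)_\infty=(\pm z,\pm q^{\frac12}z;q)_\infty$ and $(z^{-2};q)_\infty=(\pm z^{-1},\pm q^{\frac12}z^{-1};q)_\infty$. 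Hence the numerator of the integrand factors into a product of terms linear in $z$ and in $z^{-1}$, which is precisely the structure required by \eqref{gascardeq}.

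Next I would read off the parameter identification. Taking $\sigma=1$ (admissible since the conditions $|c_k|<\sigma/|t|$, $|d_l|<1/\sigma$ of Theorem \ref{gascard} then reduce to $\max(|a_1|,\ldots,|a_5|)<1$, and by the Remark following that theorem $G_{m,t}$ is independent of $\sigma$ for $m\in\mathbb Z$), and setting $m=0$, $t=1$, the integral in \eqref{altNRint} equals $\frac{2\pi}{(q;q)_\infty}G_{0,1}$ with the four parameter sets ${\bf a}={\bf b}=\{\pm1,\pm q^{\frac12},\lambda\}$ and ${\bf c}={\bf d}=\{a_1,\ldots,a_5\}$ in the notation of Theorem \ref{gascard}. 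Since here $C=A=5$, the condition $C\ge A$ holds and I may invoke \eqref{Gmt2}, summing over $k=1,\ldots,5$; this sum is exactly the symmetrization $\II{a_1;a_2,\ldots,a_5}$ appearing on the right of \eqref{altNRint}.

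The main work, and the step I expect to be the principal obstacle, is to show that each summand of \eqref{Gmt2} collapses to the stated ${}_8W_7$. For the term with $c_k=a_1$ the numerator parameters of the inner series are $a_1\{a_1,\ldots,a_5\}$ together with $qa_1/\{\pm1,\pm q^{\frac12},\lambda\}$, and the denominator parameters are $a_1\{\pm1,\pm q^{\frac12},\lambda\}$ together with $qa_1/\{a_2,\ldots,a_5\}$. The pair $\pm q^{\frac12}a_1$ occurs in both lists and cancels, which is the content of the identity $qa_1^2/(\pm q^{\frac12}a_1)=\pm q^{\frac12}a_1$, so the nominal ${}_{10}\phi_9$ reduces to a ${}_8\phi_7$ whose remaining numerator entries $\pm qa_1,a_1^2$ and denominator entries $\pm a_1$ are exactly the very-well-poised block of ${}_8W_7(a_1^2;\cdots)$. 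Simultaneously I would simplify the prefactor: the numerator $q$-shifted factorials carry the products $(\pm a_1,\pm q^{\frac12}a_1;q)_\infty$ and $(\pm a_1^{-1},\pm q^{\frac12}a_1^{-1};q)_\infty$, which by \eqref{sq} equal $(a_1^2;q)_\infty$ and $(a_1^{-2};q)_\infty$; the factor $(a_1^2;q)_\infty$ then cancels the denominator factor $c_k{\bf d}\ni a_1^2$, leaving precisely the coefficient $(a_1^{-2},a_1\lambda,\lambda/a_1;q)_\infty/(a_{12},\ldots,a_{15},a_2/a_1,\ldots,a_5/a_1;q)_\infty$.

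Finally I would confirm the remaining bookkeeping. The argument of the series in \eqref{Gmt2} is $q^{-m}(qc_k)^{C-A}(a_1\cdots a_A)/(c_1\cdots c_C)$; with $m=0$ and $C-A=0$, the numerator-parameter product $(1)(-1)(q^{\frac12})(-q^{\frac12})\lambda=q\lambda$ and $c_1\cdots c_C=a_{12345}$, this becomes $q\lambda/a_{12345}$, matching the right-hand side. The $C=A$ convergence criterion of Theorem \ref{gascard} reads $|q^{-m}a_1\cdots a_A|<|c_1\cdots c_C|$, i.e.\ $|q\lambda|<|a_{12345}|$, which together with $\max(|a_1|,\ldots,|a_5|)<1$ (whence $|a_{12345}|<1$) yields the stated region of validity. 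Comparison with \eqref{NRint} then identifies \eqref{altNRint} as the advertised alternative form of the Nassrallah--Rahman integral as a symmetrized sum of five ${}_8W_7$'s.
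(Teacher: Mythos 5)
Your proposal is correct and follows essentially the same route as the paper, whose proof is the single sentence that one applies Theorem \ref{gascard} to the left-hand side of \eqref{altNRint}; your parameter identification ${\bf b}=\{\pm1,\pm q^{\frac12},\lambda\}$, ${\bf c}={\bf d}=\{a_1,\ldots,a_5\}$ via \eqref{sq}, the use of \eqref{Gmt2} with $m=0$, $t=1$, and the cancellation $qa_1^2/(\pm q^{\frac12}a_1)=\pm q^{\frac12}a_1$ reducing the ${}_{10}\phi_9$ to the very-well-poised ${}_8W_7$ are exactly the steps the paper leaves implicit (the same cancellation is noted explicitly in its proof of \eqref{genRah}). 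The bookkeeping of the prefactor, argument $q\lambda/a_{12345}$, and convergence region also checks out, so the write-up is a faithful, fully detailed version of the paper's argument.
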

}
{
\begin{proof}
Starting with the left-hand side of \eqref{altNRint}
and applying 
Theorem \ref{gascard}
completes the proof.
\end{proof}
}

{By 
comparing the above expression for the Nassrallah--Rahman 
integral to \eqref{altNRint}, one can obtain the following 
transformation of a symmetrized sum of five {${}_{8}W_7$'s} is equal
to a symmetric ${}_8W_7$.
}

{
\begin{cor}Let 
${\bf a}:=\{a_1,\ldots,a_5\}$,
$a_1,\ldots,a_5,\lambda\in\CCast$,
$q\in\CCdag$. Then
\begin{eqnarray}
&&\hspace{-1cm}\II{a_1\boro{;a_2},\ldots,a_5}
\frac{\left(\lambda a_1,a_1^{-2},\dfrac{\lambda}{a_1};q\right)_\infty}
{\left(a_{12},\ldots,a_{15},\dfrac{a_2}{a_1},\ldots,\dfrac{a_5}
{a_1};q\right)_\infty}
{\Whyp{8}{7}{a_1^2}{a_{12},\ldots,a_{15},
\frac{q a_1}{\lambda}}{q,\frac{q\lambda}{a_{12345}}}}\nonumber\\
&&\hspace{5.3cm}=\frac{{2}\left(
\lambda{\bf a},
\dfrac{a_{12345}}{\lambda};q\right)_\infty}
{(a_{12},\ldots,a_{45},\lambda^2;q)_\infty}
\Whyp87{\frac{\lambda^2}{q}}{
\frac{\lambda}{\bf a}
}
{q,\frac{a_{12345}}{\lambda}},
\label{compNR}
\end{eqnarray}
where 
$|q|<\left|\dfrac{a_{12345}}{\lambda}\right|<1$.
\end{cor}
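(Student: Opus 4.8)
The plan is to derive \eqref{compNR} purely algebraically by eliminating the common definite integral between the Nassrallah--Rahman integral \eqref{NRint} and the alternative evaluation \eqref{altNRint} just established; no new analytic estimate is needed.

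First I would note that the integrand
$$\frac{(\expe^{\pm 2i\psi},\lambda \expe^{\pm i\psi};q)_\infty}{({\bf a}\expe^{\pm i\psi};q)_\infty}$$
is invariant under $\psi\mapsto-\psi$, because every $\pm$-pairing is symmetric under the interchange $\expe^{i\psi}\leftrightarrow\expe^{-i\psi}$. Consequently the integral over $(-\pi,\pi)$, which is the left-hand side of \eqref{altNRint}, equals twice the integral over $(0,\pi)$, which is the quantity evaluated by \eqref{NRint}. Keeping track of this factor of two is the one place where care is genuinely required.

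Next I would write the two closed evaluations of the $(-\pi,\pi)$ integral side by side. The evaluation \eqref{altNRint} expresses it as $\frac{2\pi}{(q;q)_\infty}$ times the symmetrized sum of five ${}_8W_7$'s appearing on the left of \eqref{compNR}, using only the trivial rewriting $\lambda a_1=a_1\lambda$. Doubling the right-hand side of \eqref{NRint} and splitting off the factor $(q;q)_\infty$ from its denominator $(q,a_{12},\ldots,a_{45},\lambda^2;q)_\infty$ rewrites the same integral as $2\cdot\frac{2\pi}{(q;q)_\infty}$ times the single ${}_8W_7$ on the right of \eqref{compNR}, where I would use $\lambda^{-1}a_{12345}=a_{12345}/\lambda$. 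Equating the two and cancelling the common prefactor $\frac{2\pi}{(q;q)_\infty}$ produces \eqref{compNR}, the surviving factor $2$ being exactly the contribution of the even-ness argument.

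Finally I would address the stated domain $|q|<|a_{12345}/\lambda|<1$. The outer inequality $|a_{12345}/\lambda|<1$ is the convergence condition for the right-hand ${}_8W_7$ (whose argument is $a_{12345}/\lambda$), while $|q|<|a_{12345}/\lambda|$ guarantees $|q\lambda/a_{12345}|<1$, the convergence condition for each left-hand ${}_8W_7$ (whose argument is $q\lambda/a_{12345}$); thus the identity is meaningful precisely on this region. Since the comparison of integrals is valid under the stronger hypothesis $\max(|a_1|,\ldots,|a_5|)<1$ of \eqref{NRint} and \eqref{altNRint}, one then extends \eqref{compNR} to the full stated domain by analytic continuation in the parameters, both sides being analytic there. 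The only genuine obstacle is the factor-of-two bookkeeping between the ranges $(0,\pi)$ and $(-\pi,\pi)$; the rest is routine cancellation of infinite $q$-shifted factorials.
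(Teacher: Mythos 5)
Your proposal is correct and is essentially identical to the paper's own proof, which likewise obtains \eqref{compNR} by comparing \eqref{altNRint} with \eqref{NRint} and using the evenness of the integrand in $\psi$ to account for the factor of $2$. Your extra remarks on cancelling $(q;q)_\infty$, matching the convergence conditions of the two ${}_8W_7$'s, and extending by analytic continuation are consistent elaborations of the same argument.
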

}
{
\begin{proof}
Comparing \eqref{altNRint} to \eqref{NRint} and noting that the 
integrand is an even function of $\psi$ completes the proof.
\end{proof}
}

{By taking $\lambda=a_5$
one reduces the Nassrallah--Rahman integrals 
\eqref{NRint}, \eqref{altNRint} to the Askey--Wilson integral
\eqref{AWint} (the ${}_8W_7$ becomes unity). Comparing these
limit expressions produces the following
nonterminating summation formula
which relates a symmetric sum of 
four {${}_6W_5$'s} to an infinite product {that} we now give.
}

\begin{cor}Let $a_1,\ldots,a_4\in\CCast$,
$q\in\CCdag$, \moro{and none of the arguments of the modified theta functions are equal to some $q^m$, $m\in{\mathbb Z}$,
and none of infinite $q$-shifted factorials vanish}. Then
\begin{eqnarray}
&&\hspace{-3cm}\II{a_1;a_2,\boro{a_3},a_4}
\frac{(a_1^{-2};q)_\infty}
{\left(a_{12},a_{13},a_{14},\dfrac{a_2}{a_1}
,\dfrac{a_3}{a_1},\dfrac{a_4}{a_1};q\right)_\infty}
{\Whyp65{a_1^2}{a_{12},a_{13},a_{14}
}{q,\frac{q}{a_{1234}}}}
\nonumber\\
&&\hspace{-1cm}
{=\frac{(a_{1234};q)_\infty}
{(a_{12},a_{13},a_{14},a_{23},a_{24},a_{34};q)_\infty}
\II{a_1;a_2,\boro{a_3},a_4}
\frac{{\vartheta}(a_1^{-2},a_{23},a_{24},a_{34};q)}{{\vartheta}(a_{1234},\frac{a_2}{a_1},
\frac{a_3}{a_1},\frac{a_4}{a_1};q)}}
\nonumber\\
&&\hspace{2cm}=\frac{{2}(a_{1234};q)_\infty}
{(a_{12},a_{13},a_{14},a_{23},a_{24},a_{34};q)_\infty},
\end{eqnarray}
where $|q|<\left|a_{1234}\right|<1$.
\end{cor}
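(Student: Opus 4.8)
The plan is to obtain this corollary as the $\lambda=a_5$ degeneration of the five–parameter comparison \eqref{altNRint}, and then to sum each surviving very-well-poised series in closed form. First I would set $\lambda=a_5$ in \eqref{altNRint}. On the left-hand side the numerator factor $(\lambda\expe^{\pm i\psi};q)_\infty=(a_5\expe^{\pm i\psi};q)_\infty$ cancels the $a_5$ factor in the denominator $({\bf a}\expe^{\pm i\psi};q)_\infty$, so the integral collapses to the four-parameter Askey--Wilson integral \eqref{AWint}; since the integrand is even in $\psi$ one has $\int_{-\pi}^\pi=2\int_0^\pi$, and \eqref{AWint} then reproduces exactly the infinite product $2(a_{1234};q)_\infty/(a_{12},a_{13},a_{14},a_{23},a_{24},a_{34};q)_\infty$, which is the last member of the stated chain.

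Next I would analyse the symmetrization $\II{a_1;a_2,a_3,a_4}$ coming from the right-hand side of \eqref{altNRint} under $\lambda=a_5$. For a base-$a_k$ term with $k\in\{1,2,3,4\}$, the numerator parameter $a_{k5}=a_ka_5$ and the parameter $qa_k/\lambda=qa_k/a_5$ are very-well-poised partners (their product is $q\,a_k^2=q\,b$ with $b=a_k^2$), so each cancels against the denominator partner of the other; this collapses the $_{8}W_{7}$ to $\Whyp65{a_k^2}{a_ka_i,a_ka_j,a_ka_l}{q,\frac{q}{a_{1234}}}$ with $\{i,j,l\}=\{1,2,3,4\}\setminus\{k\}$, while simultaneously the prefactor sheds its $(a_k\lambda,\lambda/a_k;q)_\infty=(a_{k5},a_5/a_k;q)_\infty$ factors, leaving precisely the summand displayed in the corollary. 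The delicate point is the fifth, base-$a_5$ term: here $\lambda/a_5=1$, so its prefactor carries $(1;q)_\infty=0$, and I must verify that this zero is not compensated by a pole of the accompanying $_{8}W_{7}$. Taking the limit $\lambda\to a_5$, the parameter $qa_5/\lambda\to q$ and the matching denominator entry $qa_5^2/(qa_5/\lambda)=a_5\lambda\to a_5^2$ both remain finite and (generically) outside $\Omega_q$, while the argument has modulus $<1$; hence the $_{8}W_{7}$ stays bounded and the whole term tends to $0$. Thus the five-fold symmetrization reduces to the four-fold symmetrization of the corollary, establishing equality of the first and last members.

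Finally, to recover the middle theta-quotient member I would apply the nonterminating very-well-poised $_{6}\phi_{5}$ summation to each of the four surviving $_{6}W_{5}$'s. Its argument $q/a_{1234}$ is exactly the summable value $qb/(cde)$ with $b=a_k^2$ and $\{c,d,e\}=\{a_ka_i,a_ka_j,a_ka_l\}$, and convergence is guaranteed by the hypothesis $|q|<|a_{1234}|$. Multiplying the resulting product evaluation by the surviving prefactor and regrouping the infinite $q$-shifted factorials into modified theta functions via $\vartheta(x;q)=(x,q/x;q)_\infty$ — pairing $(a_k^{-2};q)_\infty(qa_k^2;q)_\infty=\vartheta(a_k^{-2};q)$ and likewise the $a_{ij}$, $a_{1234}$ and $a_i/a_k$ factors — turns each term into the displayed theta quotient times the common product $(a_{1234};q)_\infty/(a_{12},a_{13},a_{14},a_{23},a_{24},a_{34};q)_\infty$. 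This gives the first-equals-middle identity; combined with the previous paragraph it yields the full chain, and in particular the nonobvious theta identity $\II{a_1;a_2,a_3,a_4}\vartheta(a_1^{-2},a_{23},a_{24},a_{34};q)/\vartheta(a_{1234},a_2/a_1,a_3/a_1,a_4/a_1;q)=2$ emerges as a free byproduct rather than something to be proved on its own. The main obstacle throughout is the bookkeeping of parameter cancellations, above all the careful $0\times(\text{finite})$ justification for the vanishing of the base-$a_5$ term.
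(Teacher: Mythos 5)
Your proposal is correct and follows exactly the paper's route: the paper's proof is the one-line ``Setting $\lambda=a_5$ in \eqref{altNRint}, and comparing with \eqref{AWint} completes the proof,'' and you have simply filled in the details it leaves implicit --- the collapse of each ${}_8W_7$ to a ${}_6W_5$ via the very-well-poised partner cancellation, the $0\times(\text{finite})$ vanishing of the base-$a_5$ term, and the nonterminating ${}_6\phi_5$ summation that produces the middle theta-quotient member. No gaps; this is the intended argument, carried out carefully.
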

{
\begin{proof}
Setting $\lambda=a_5$ in \eqref{altNRint}, and
comparing with \eqref{AWint} completes the proof.
\end{proof}
}

{
\begin{rem}
Note that for the ${}_8W_7(a;b,c,d,e,f;q,z)$'s
which appear in this subsection, instead of 
the argument being $q^2a^2/(bcdef)$ it is
$-q^2a^2/(bcdef)$. Compare with 
Bailey's transformation of a very-well poised
${}_8W_7$ \cite[(17.9.16)]{NIST:DLMF}. So these
${}_8W_7$'s cannot be written as a sum of two
balanced ${}_4\phi_3$'s.
\end{rem}
}

{Some other applications of {Theorem} \ref{gascaru} arise when one encounters 
a sum of two basic hypergeometric functions with
argument $q$. In this case, you} are almost certainly guaranteed 
to be able to find a corresponding
integral representation. Below
we present some examples of this.
First we present two integral representations
of a nonterminating ${}_2\phi_1$.
Note that other integral representations for
the arbitrary ${}_2\phi_1$ have been
presented such as Watson's contour integral {(see \cite[(4.2.2)]{GaspRah} for more details)}
\begin{equation}
\qhyp21{a,b}{c}{\moro{q,}z}=-\frac{1}{2i}
\frac{(a,b;q)_\infty}{(q,c;q)_\infty}
\int_{-i\infty}^{i\infty}
\frac{((q,c)q^s;q)_\infty}
{((a,b)q^s;q)_\infty}\frac{(-z)^s}{\sin(\pi s)}{\mathrm d}s,
\end{equation}
where $\pm i\infty:=\pm\lim_{x\uparrow\infty}ix$,
where $x\in(0,\infty)$.

\begin{cor}Let $a,b,c,z\in\CCast$, such that $|z|<1$, $q\in\CCdag$, $\tau\in(0,1)$, 
$w=\expe^{i\eta}$. Then
\begin{eqnarray}
&&\hspace{-1cm}\qhyp21{a,b}{c}{q,z}\nonumber\\
\label{2phi1a}&&\hspace{0cm}=
\frac{(q,a,\frac{c}{b},\frac{abz}{c};q)_\infty}{2\pi
{\vartheta(f,f\frac{c}{bz};q)}
}
\int_{-\pi}^\pi
\frac{((f\sqrt{\frac{c}{bz}},\frac{q}{f}\sqrt{\frac{bz}{c}})
\frac{\tau}{w},(f\sqrt{\frac{c}{bz}},\frac{q}{f}\sqrt{\frac{bz}
{c}},\sqrt{bcz})\frac{w}{\tau};q)_\infty}
{((\sqrt{\frac{c}{bz}},\sqrt{\frac{bz}{c}})\frac{\tau}{w},(\sqrt{\frac{bz}{c}}a,
\sqrt{\frac{cz}{b}})\frac{w}{\tau};q)_\infty}
{\mathrm d}\eta
\\
&&\label{2phi1b}\hspace{0cm}=
\frac{(q,a,b,\frac{c}{a},\frac{c}{b},\frac{abz}{c};q)_\infty}
{2\pi
{\vartheta(f,f\frac{ab}{c};q)}
(c;q)_\infty
}
\int_{-\pi}^\pi
\frac{((f\sqrt{\frac{ab}{c}},\frac{q}{f}\sqrt{\frac{c}{ab}})\frac{\tau}{w},(f\sqrt{\frac{ab}{c}},\frac{q}{f}\sqrt{\frac{c}{ab}})\frac{w}{\tau};q)_\infty}
{((\sqrt{\frac{ab}{c}},\sqrt{\frac{c}{ab}})\frac{\tau}{w},(\sqrt{\frac{ac}{b}},\sqrt{\frac{bc}{a}},\sqrt{\frac{ab}{c}}z)\frac{w}{\tau};q)_\infty}{\mathrm d}\eta\moro{,}
\end{eqnarray}
\moro{and none of the arguments of the modified theta functions are equal to some $q^m$, $m\in{\mathbb Z}$.}
\end{cor}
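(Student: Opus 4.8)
The plan is to read off both representations as special cases of Theorem \ref{gascaru}, so that essentially all of the genuine work reduces to one parameter match together with the verification of a single classical series identity. Starting from the integral on the left of \eqref{qqform} (renaming the integration variable $\psi\mapsto\eta$, $z\mapsto w=\expe^{i\eta}$, and $\sigma\mapsto\tau$), I would, for \eqref{2phi1a}, choose in Theorem \ref{gascaru} the values $d_1=\sqrt{c/(bz)}$, $d_2=\sqrt{bz/c}$, ${\bf a}=\{\sqrt{bcz}\}$ (so $A=1$) and ${\bf c}=\{a\sqrt{bz/c},\sqrt{cz/b}\}$ (so $C=2$); for \eqref{2phi1b} I would instead take $d_1=\sqrt{ab/c}$, $d_2=\sqrt{c/(ab)}$, ${\bf a}=\emptyset$ (so $A=0$) and ${\bf c}=\{\sqrt{ac/b},\sqrt{bc/a},z\sqrt{ab/c}\}$ (so $C=3$). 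In both cases $d_1d_2=1$, which forces $fd_1=f/d_2$ and $\tfrac{q}{f}d_2=\tfrac{q}{fd_1}$; this is exactly why the two exponential factors in \eqref{2phi1a}, \eqref{2phi1b} share their first two numerator entries, and a direct comparison then shows that with these choices the integrand of \eqref{qqform} coincides termwise with the one displayed in \eqref{2phi1a} (resp.~\eqref{2phi1b}). Since $d_1/d_2=c/(bz)$ (resp.~$ab/c$), the factor $\vartheta(f,fd_1/d_2;q)$ produced by Theorem \ref{gascaru} matches the $\vartheta$ written in each prefactor.

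Next I would evaluate the right-hand side $H({\bf a},{\bf c},{\bf d};q)$ from \eqref{Hdefn} for each choice. The purpose of the square-root parameters is that the products $d_j{\bf a}$, $d_j{\bf c}$ and the ratios $d_2/d_1$ collapse to rational monomials in $a,b,c,z$: for \eqref{2phi1a} the first summand has $d_1{\bf a}=\{c\}$, $d_1{\bf c}=\{a,c/b\}$, $d_2/d_1=bz/c$, while the second has $d_2{\bf a}=\{bz\}$, $d_2{\bf c}=\{abz/c,z\}$, $d_1/d_2=c/(bz)$; the analogous reductions for \eqref{2phi1b} give $d_1{\bf c}=\{a,b,abz/c\}$, and so on. Pulling the resulting infinite products out front and comparing with the explicit prefactors in \eqref{2phi1a}, \eqref{2phi1b}, the claim in each case is reduced to a single series-level statement: that the two-term symmetric sum $H$, with these parameters, equals a fixed ratio of $q$-shifted factorials times ${}_2\phi_1(a,b;c;q,z)$ (for \eqref{2phi1a} one needs $(a,c/b,abz/c;q)_\infty H={}_2\phi_1(a,b;c;q,z)$, and for \eqref{2phi1b} the corresponding identity with $(a,b,c/a,c/b,abz/c;q)_\infty/(c;q)_\infty$). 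Solving this for ${}_2\phi_1$ and substituting $H=\tfrac{(q;q)_\infty}{2\pi\vartheta(f,fd_1/d_2;q)}\int(\cdots)$ from Theorem \ref{gascaru} produces exactly \eqref{2phi1a} and \eqref{2phi1b}.

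The main obstacle is precisely this last series identity. In each case $H$ is a symmetric sum of two basic hypergeometric series of argument $q$, each carrying a vanishing parameter in the van de Bult--Rains notation (a top-zero $\nqphyp{2}{2}{-1}$ pair for \eqref{2phi1a}, a bottom-zero $\nqphyp{3}{1}{1}$ pair for \eqref{2phi1b}), and what must be shown is that this symmetric sum reassembles into the single argument-$z$ series ${}_2\phi_1(a,b;c;q,z)$. This is a classical nonterminating ${}_2\phi_1$ transformation, obtainable by iterating Heine's transformations together with the attendant two-term connection formula for the general solution of the $q$-hypergeometric equation (cf.\ \cite[Appendix III and \S4.3]{GaspRah}); I would invoke and check it against these parameters rather than rederive it. The remaining bookkeeping is routine but must be carried out with care: one verifies that the square roots are assigned consistently, that the constraints $|c_k|<\sigma$ and $|d_l|<1/\sigma$ of Theorem \ref{gascaru} can be met together with $\tau\in(0,1)$ and $|z|<1$, and that none of the theta arguments equal a power $q^m$, which is exactly the proviso recorded at the end of the statement.
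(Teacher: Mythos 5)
Your proposal is correct and is essentially the paper's own proof read in the opposite direction: the paper starts from the classical two-term argument-$q$ decompositions of the nonterminating ${}_{2}\phi_{1}$ (cited as \cite[(17.9.3)]{NIST:DLMF} and \cite[(17.9.3\_5)]{NIST:DLMF}) and then applies \eqref{qqform} with exactly the parameter sets $\{{\bf a},{\bf c},{\bf d}\}$ you chose, whereas you start from \eqref{qqform}, evaluate $H$, and invoke that same classical decomposition to identify $H$ with the ${}_{2}\phi_{1}$ --- a logically equivalent route resting on the same two ingredients. Incidentally, your choice ${\bf c}=\{\sqrt{ac/b},\sqrt{bc/a},z\sqrt{ab/c}\}$ for \eqref{2phi1b} matches the displayed integrand, whereas the paper's proof writes this set without the factor $z$ in its third element.
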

\begin{proof}
For \eqref{2phi1a} start with cf.~\cite[(17.9.3)]{NIST:DLMF}
\begin{eqnarray}
&&\hspace{-0.5cm}\frac{(c;q)_\infty}{(a,\frac{c}{b},\frac{abz}{c};q)_\infty}\qhyp21{a,b}{c}{q,z}\nonumber\\
&&\hspace{1.5cm}
=\frac{(c;q)_\infty}{(a,\frac{c}{b},\frac{bz}{c};q)_\infty}\qphyp{2}{2}{-1}{a,\frac{c}{b}}{c,\frac{qc}{bz}}{q,q}+\frac{(bz;q)_\infty}{(z,\frac{c}{bz},\frac{abz}{c};q)_\infty}\qphyp22{-1}
{z,\frac{abz}{c}}{bz,\frac{qbz}{c}}{q,q},
\end{eqnarray}
then {apply}  \eqref{qqform} with 
\begin{equation}
{\bf a}:=\{\sqrt{bcz}\},\ 
{\bf c}:=\left\{\sqrt{\frac{bz}{c}}a,\sqrt{\frac{cz}{b}}\right\}, \ 
{\bf d}:=\left\{\sqrt{\frac{c}{bz}},\sqrt{\frac{bz}{c}}\right\},
\end{equation}
and therefore $C-A-2=-1$.
For \eqref{2phi1b} start with cf.~\cite[(17.9.3\_5)]{NIST:DLMF}
\begin{eqnarray}
&&\hspace{-0.5cm}\frac{(c;q)_\infty}{(a,b,\frac{c}{a},\frac{c}{b},\frac{abz}{c};q)_\infty}\qhyp21{a,b}{c}{q,z}\nonumber\\
&&\hspace{1.5cm}
=\frac{1}{(a,\frac{c}{b},\frac{bz}{c};q)_\infty}\qphyp{3}{1}{1}{a,b,\frac{abz}{c}}{\frac{qab}{c}}{q,q}+\frac{1}{(z,c,\frac{c}{a},\frac{c}{b};q)_\infty}\qphyp31{1}
{\frac{c}{a},\frac{c}{b},z}{\frac{qc}{ab}}{q,q},
\end{eqnarray}
then {apply} \eqref{qqform} with 
\begin{equation}
{\bf a}:=\emptyset,\ 
{\bf c}:=\left\{\sqrt{\frac{ac}{b}},\sqrt{\frac{bc}{a}},\sqrt{\frac{ab}{c}}\right\}, \ 
{\bf d}:=\left\{\sqrt{\frac{ab}{c}},\sqrt{\frac{c}{ab}}\right\},
\end{equation}
and therefore $C-A-2=1$. This completes the
proof.
\end{proof}

Another example where an integral representation for a 
nonterminating basic hypergeometric function may be found
is for a well-poised ${}_3\phi_2$.
\begin{cor}
\label{3phi2thm}
Let $a,b,c,x\in\CCast$, such that $|qax|<|bc|$, $q\in\CCdag$, $\tau\in(0,1)$, 
$w=\expe^{i\eta}$,
\moro{$f,fx\ne q^m$, $m\in\Z$}. Then
\begin{eqnarray}
&&\hspace{-1cm}\qhyp32{a,b,c}{\frac{qa}{b},\frac{qa}{c}}{q,\frac{qax}{bc}}
=\frac{(q,a,\frac{qa}{bc};q)_\infty}{2\pi
{\vartheta(f,f\boro{x};q)}
(\frac{qa}{b},\frac{qa}{c};q)_\infty}\nonumber\\
&&\hspace{2cm}\times\int_{-\pi}^{\pi}
\frac{(
(
\boro{f\sqrt{x},\frac{q}{f\sqrt{x}}}
)
\frac{\tau}{w},(
\boro{f\sqrt{x},\frac{q}{f\sqrt{x}},\frac{qa\sqrt{x}}{b},\frac{qa\sqrt{x}}{c},ax^\frac32}
\boro{)}\frac{w}{\tau};q)_\infty}{((
\boro{\sqrt{x},\frac{1}{\sqrt{x}}}
)\frac{\tau}{w},(
\boro{\pm\sqrt{ax},\pm\sqrt{qax},\frac{qa\sqrt{x}}{bc}}
)\frac{w}{\tau};q)_\infty}{\mathrm d}\eta.
\end{eqnarray}
\end{cor}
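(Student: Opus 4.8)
The plan is to follow the template of the preceding corollary verbatim: first produce a nonterminating transformation that rewrites the well-poised ${}_3\phi_2$ as a symmetric sum of two basic hypergeometric series of argument $q$, then recognize that sum as $H({\bf a},{\bf c},{\bf d};q)$ for a suitable parameter choice, and finally invoke \eqref{qqform} of Theorem \ref{gascaru} to turn $H$ into the stated contour integral. Thus the opening step is to locate the ${}_3\phi_2$ analogue of the two nonterminating ${}_2\phi_1$ transformations used just above (cf.~\cite[\S17.9]{NIST:DLMF}): a three-term / analytic-continuation identity for the nonterminating well-poised series $\qhyp32{a,b,c}{\frac{qa}{b},\frac{qa}{c}}{q,\frac{qax}{bc}}$ expressing it, up to an explicit infinite-product factor, as a sum of two argument-$q$ series that is symmetric under interchange of the two members of ${\bf d}$.

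Second, I would read off the parameter dictionary by matching the integrand of the claimed identity against that of \eqref{qqform}. Comparing the $\frac{\tau}{w}$ factors forces ${\bf d}=\{\sqrt{x},1/\sqrt{x}\}$ (so $d_1=\sqrt{x}$, $d_2=1/\sqrt{x}$, whence $d_1/d_2=x$ and the theta prefactor $\vartheta(f,fd_1/d_2;q)$ becomes exactly the $\vartheta(f,fx;q)$ of the statement), while the numerator and denominator $\frac{w}{\tau}$ factors give ${\bf a}=\{\frac{qa\sqrt{x}}{b},\frac{qa\sqrt{x}}{c},ax^{3/2}\}$ and ${\bf c}=\{\pm\sqrt{ax},\pm\sqrt{qax},\frac{qa\sqrt{x}}{bc}\}$. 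Hence $A=3$, $C=5$, and $C-A-2=0$, consistent with the series occurring in $H$ being of type ${}_5\phi_4$, and the four parameters $\pm\sqrt{ax},\pm\sqrt{qax}$ are to be collapsed using the quadratic identities \eqref{sq} and \eqref{poch.id:7}. Substituting these data into \eqref{qqform}, the factor $\frac{2\pi\vartheta(f,fx;q)}{(q;q)_\infty}$ cancels the $\vartheta(f,fx;q)$ and $2\pi$ in the quoted constant $\frac{(q,a,\frac{qa}{bc};q)_\infty}{2\pi\vartheta(f,fx;q)(\frac{qa}{b},\frac{qa}{c};q)_\infty}$, and the leading $q$ of the numerator product cancels $(q;q)_\infty$; what remains to be shown is precisely the initial transformation ${}_3\phi_2=\frac{(a,\frac{qa}{bc};q)_\infty}{(\frac{qa}{b},\frac{qa}{c};q)_\infty}\,H({\bf a},{\bf c},{\bf d};q)$, the product prefactors of $H$ in \eqref{Hdefn} supplying the remaining factors.

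The main obstacle is this first step: establishing the symmetric-sum transformation of the well-poised ${}_3\phi_2$ and keeping the bookkeeping of the $\pm$-paired parameters straight, since the two ${}_5\phi_4$'s inside $H$ only collapse back to a single ${}_3\phi_2$ after the quadratic reductions \eqref{sq}, \eqref{poch.id:7} are carried out. Everything downstream is mechanical: once that transformation is in hand, the passage through \eqref{qqform} is routine. For convergence, $C-A-2=0$ together with $|q|<1$ guarantees that each argument-$q$ series in $H$ converges, while the hypothesis $|qax|<|bc|$ keeps the argument $\frac{qax}{bc}$ of the target ${}_3\phi_2$ inside the unit disk; finally the conditions $f,fx\ne q^m$ ($m\in\Z$) ensure the denominator theta functions $\vartheta(f,fx;q)$ do not vanish, so the asserted identity is well-posed.
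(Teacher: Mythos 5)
Your proposal follows the paper's proof essentially step for step: the paper starts from the tabulated transformation \cite[(III.35)]{GaspRah}, which is precisely the three-term identity you flag as the ``main obstacle'' (it rewrites the well-poised ${}_3\phi_2$ as a sum of two ${}_5\phi_4$'s with argument $q$), and then applies \eqref{qqform} with exactly the parameter sets you read off, namely ${\bf a}=\{\frac{qa\sqrt{x}}{b},\frac{qa\sqrt{x}}{c},ax^{3/2}\}$, ${\bf c}=\{\pm\sqrt{ax},\pm\sqrt{qax},\frac{qa\sqrt{x}}{bc}\}$, ${\bf d}=\{\sqrt{x},1/\sqrt{x}\}$ with $C-A-2=0$. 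The only element missing from your write-up is the explicit citation of that standard transformation, so there is no genuine gap.
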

\begin{proof}
Start with cf.~\cite[(III.35)]{GaspRah}, then
\begin{eqnarray}
&&\hspace{-1.5cm}\boro{\frac{(\frac{qa}{b},\frac{qa}{c};q)_\infty}{(a,\frac{qa}{bc};q)_\infty}}\qhyp32{a,b,c}{\frac{qa}{b},\frac{qa}{c}}{q,\boro{\frac{qax}{bc}}}\nonumber\\
&&\hspace{1.5cm}
=\frac{(\frac{qax}{b},\frac{qax}{c};q)_\infty}{(1/x,\frac{qax}{bc};q)_\infty}\qhyp{5}{4}{\pm x\sqrt{a},\pm x\sqrt{qa},\frac{qax}{bc}}{qx,\frac{qax}{b},\frac{qax}{c},\boro{ax^2}}{q,q}\nonumber\\
&&\hspace{3cm}+\frac{(\frac{qa}{b},\frac{qa}{c},ax;q)_\infty}{(a,\frac{qa}{bc},x;q)_\infty}\qhyp54
{\pm\sqrt{a},\pm\sqrt{qa},\frac{qa}{bc}}{\frac{qa}{b},\frac{qa}{c},q/x,ax}{q,q}.
\end{eqnarray}
Applying \eqref{qqform} with 
\begin{equation}
{\bf a}:=\left\{\frac{qa\sqrt{x}}{b},\frac{qa\sqrt{x}}{c},ax^\frac32\right\},\ 
{\bf c}:=\left\{\pm\sqrt{ax},\pm\sqrt{qax},\frac{qa\sqrt{x}}{bc}\right\}, \ 
{\bf d}:=\left\{\sqrt{x},\frac{1}{\sqrt{x}}\right\},
\end{equation}
and therefore $C-A-2=0$, completes the proof.
\end{proof}

\subsection{{Unbalanced symmetrization transformations for basic hypergeometric functions and some of their specializations and limits}}

A direct consequence of {Theorem} \ref{gascaru} is the following 
integral.
\begin{lem}\label{intlem}
Let ${\bf a}:=\{a_1,\ldots,a_A\}$, 
${\bf b}:=\{b_1,\ldots,b_B\}$, 
${\bf c}:=\{c_1,\ldots,c_C\}$, 
${\bf d}:=\{d_1,\ldots,d_D\}$ be
sets of complex numbers with cardinality 
$A, B, C, D\in\mathbb N_0$ (not all zero) respectively, 
$z=\expe^{i\psi}$, $w=\expe^{i\eta}$. 
Let \moro{$\sigma,\tau\in(0,\infty)$,} $t\in \CCast$, so that 
$|b_j|<|d_l|<\min\{\moro{\tau}/|t|,1/\sigma\}$, $|d_la_i|<1/|t|$, 
and $|a_i|<|c_k|<\min\{\sigma/|t|,1/\moro{\tau}\}$ for any 
$a_i, b_j, c_k, d_l$ elements of ${\bf a}, {\bf b}, {\bf c}, 
{\bf d}$ respectively. Then 
\begin{eqnarray}
&&\int_{-\pi}^\pi
\frac{({\bf b}\frac{\sigma}{z}, t\,{\bf a}\frac{z}{\sigma};q)_\infty}
{({\bf d}\frac{\sigma}{z}, t\,{\bf c}\frac{z}{\sigma};q)_\infty}
{\mathrm d}{\psi}
=
\int_{-\pi}^\pi
\frac{({\bf a}\frac{\tau}{w}, t\,{\bf b}\frac{w}{\tau};q)_\infty}
{({\bf c}\frac{\tau}{w}, t\,{\bf d}\frac{w}{\tau};q)_\infty}
{\mathrm d}{\eta}=\frac{2\pi}{(q;q)_\infty} G_{0,t}.
\end{eqnarray}
\end{lem}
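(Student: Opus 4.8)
The plan is to recognize Lemma~\ref{intlem} as essentially an application of the symmetry relation \eqref{Gmt0} from Theorem~\ref{gascard}, specialized to $m=0$. First I would observe that the rightmost equality is immediate from the definition \eqref{gascardeq}: setting $m=0$ there gives $G_{0,t}=\frac{(q;q)_\infty}{2\pi}\int_{-\pi}^\pi \frac{({\bf b}\frac{\sigma}{z}, t{\bf a}\frac{z}{\sigma};q)_\infty}{({\bf d}\frac{\sigma}{z}, t{\bf c}\frac{z}{\sigma};q)_\infty}\,{\mathrm d}\psi$ (the prefactor $(\sqrt t/\sigma)^m$ collapses to unity), so the first integral equals $\tfrac{2\pi}{(q;q)_\infty}G_{0,t}$ by definition. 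The only genuine content is the middle equality, asserting that the second integral — over $\eta$ with the roles of $({\bf a},{\bf b})$ and $({\bf c},{\bf d})$ interchanged and $\sigma$ replaced by $\tau$ — evaluates to the same quantity.

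For this I would invoke \eqref{Gmt0}, which states $G_{m,t}({\bf a},{\bf b},{\bf c},{\bf d};\sigma,q)=G_{-m,t}({\bf b},{\bf a},{\bf d},{\bf c};\sigma,q)$. Taking $m=0$ yields $G_{0,t}({\bf a},{\bf b},{\bf c},{\bf d};\sigma,q)=G_{0,t}({\bf b},{\bf a},{\bf d},{\bf c};\sigma,q)$. Now writing out the right-hand side via \eqref{gascardeq} with the swapped parameter sets, and relabelling the integration variable $\psi\mapsto\eta$ together with the dummy scale $\sigma\mapsto\tau$ (both are free positive reals, and the earlier remark guarantees $G_{0,t}$ is independent of the scale parameter when the series representations apply), produces precisely the integrand $\frac{({\bf a}\frac{\tau}{w}, t{\bf b}\frac{w}{\tau};q)_\infty}{({\bf c}\frac{\tau}{w}, t{\bf d}\frac{w}{\tau};q)_\infty}$ appearing in the lemma. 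Thus both integrals represent $\tfrac{2\pi}{(q;q)_\infty}G_{0,t}$.

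The remaining task, and the step I expect to require the most care, is bookkeeping of the convergence hypotheses. The lemma imposes the chained inequalities $|b_j|<|d_l|<\min\{\tau/|t|,1/\sigma\}$, $|d_la_i|<1/|t|$, and $|a_i|<|c_k|<\min\{\sigma/|t|,1/\tau\}$. I would verify that these simultaneously guarantee existence of \emph{both} integrals: the conditions $|c_k|<\sigma/|t|$ and $|d_l|<1/\sigma$ are exactly the hypotheses of Theorem~\ref{gascard} for the $\psi$-integral, while $|b_j|<\tau/|t|$ (reading off the swapped sets, where ${\bf b}$ plays the role of the new numerator set scaled by $t$) and $|a_i|<1/\tau$ are the analogous conditions for the $\eta$-integral after the $\sigma\mapsto\tau$ relabelling. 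The extra strict inequalities $|a_i|<|c_k|$ and $|b_j|<|d_l|$, together with $|d_la_i|<1/|t|$, are what one needs so that the hypothesis $|c_k|,|d_l|<\min\{1/\sigma,\sigma/|t|\}$ of \eqref{Gmt0} holds for a common admissible scale — they ensure the two integrals live in overlapping regions of convergence so that the symmetry \eqref{Gmt0} may legitimately be applied to equate them.

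In short: the proof is a two-line appeal to the $m=0$ case of \eqref{Gmt0} once the integrals are identified with $G_{0,t}$ via \eqref{gascardeq}; the substantive work is confirming that the stated parameter restrictions are precisely strong enough to place both integrals, simultaneously, within the domain where Theorem~\ref{gascard}'s symmetry is valid. I would present the argument in that order — definition, symmetry, relabelling — and devote a sentence to tracing each inequality back to its role.
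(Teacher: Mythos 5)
Your overall strategy---reduce everything to Theorem \ref{gascard} at $m=0$---matches the paper's, and your identification of the two integrals as $G_{0,t}({\bf a},{\bf b},{\bf c},{\bf d};\sigma,q)$ and $G_{0,t}({\bf b},{\bf a},{\bf d},{\bf c};\tau,q)$ is correct. But the way you connect them, namely \eqref{Gmt0} at a single scale followed by scale-independence, has a genuine gap, located exactly at the step you flagged as delicate. The relation \eqref{Gmt0} requires $|c_k|,|d_l|<\min\{1/\rho,\rho/|t|\}$, i.e., that the original \emph{and} the swapped integrals both exist at one common scale $\rho$, and your claim that the lemma's hypotheses ``ensure the two integrals live in overlapping regions of convergence'' is false. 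Take $t=1$, $\sigma=10$, $\tau=1/10$, ${\bf c}=\{5\}$, ${\bf d}=\{1/20\}$, ${\bf a}=\{1\}$, ${\bf b}=\{1/100\}$: every hypothesis of the lemma holds ($1/100<1/20<1/10$, $|d_1a_1|=1/20<1$, $1<5<10$), yet the $\psi$-integral exists only for scales $\rho\in(5,20)$ while the swapped ($\eta$-) integral exists only for $\rho\in(1/20,1/5)$. These intervals are disjoint, so \eqref{Gmt0} cannot be invoked at any scale, and the scale-independence remark (which applies only within a region where the integral representation is valid) cannot carry you from one interval to the other. A smaller slip in the same paragraph: the existence conditions for the $\eta$-integral are $|c_k|<1/\tau$ and $|d_l|<\tau/|t|$ (conditions on the denominator sets ${\bf c},{\bf d}$), not $|a_i|<1/\tau$ and $|b_j|<\tau/|t|$ as you wrote; the numerator sets impose no such conditions.

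The paper's proof avoids this obstruction entirely: with $m=0$, write the $\psi$-integral as the series \eqref{Gmt1} (a sum over the elements of ${\bf d}$), and observe that applying \eqref{Gmt2} to the swapped integral $G_{0,t}({\bf b},{\bf a},{\bf d},{\bf c};\tau,q)$ produces \emph{literally the same series} term by term; equivalently, the two summation expressions \eqref{Gmt1} and \eqref{Gmt2} are exchanged under $({\bf a},{\bf b},{\bf c},{\bf d})\mapsto({\bf b},{\bf a},{\bf d},{\bf c})$ when $m=0$. Since these series involve no scale parameter at all, they bridge the two (possibly disjoint) regions of existence, and the remaining hypotheses ($|b_j|<|d_l|$, $|a_i|<|c_k|$) serve to guarantee their convergence, not an overlap of integration regimes. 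Your argument becomes correct if you replace the appeal to \eqref{Gmt0} by this direct comparison of the two summation expressions of Theorem \ref{gascard}.
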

{
\begin{proof}
Setting $m=0$ in \eqref{gascardeq}, it is straightforward to 
check the identity by comparing the first summation expression of 
\eqref{gascardeq} to the
the second summation expression of \eqref{gascardeq}. Hence the result holds.
\end{proof}
}
Therefore taking into account the definition of $G_{m,t}$ 
(see expression \eqref{gascardeq}) the following identity holds.

\begin{cor}
\label{corsumDC}
Let ${\bf a}$, ${\bf b}$, ${\bf c}$, ${\bf d}$,
and the other variables {be} as defined as in Lemma \ref{intlem}.
Then one has the following (in general non-balanced)
transformation of symmetrization over variables 
for basic hypergeometric functions:
\begin{eqnarray}
&&\sum_{k=1}^D
\frac{(td_k\,{\bf a},d_k^{-1}\,{\bf b};q)_\infty}
{(td_k{\bf c},d_k^{-1}\,{\bf d}_{[k]};q)_\infty}
\qphyp{B+C}{A+D-1}{A-C}{td_k\,{\bf c},qd_k\,{\bf b}^{-1}}{td_k\,{\bf a}
,q d_k{\bf d}_{[k]}^{-1}}{q,\frac{b_1\cdots b_B}{d_1\cdots d_D}}\nonumber\\
&&=\sum_{k=1}^C
\frac{(tb_k\,{\bf c},b_k^{-1}\,{\bf a};q)_\infty}
{(tc_k{\bf d},c_k^{-1}\,{\bf c}_{[k]};q)_\infty}
\qphyp{A+D}{B+C-1}{B-D}{tc_k\,{\bf d},qc_k\,{\bf a}^{-1}}
{tc_k\,{\bf b},q c_k{\bf c}_{[k]}^{-1}}{q,\frac{a_1\cdots a_A}{c_1\cdots c_C}}.
\end{eqnarray}
\label{megathm}
\end{cor}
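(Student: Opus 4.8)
The plan is to read the identity off the two closed-form evaluations of $G_{0,t}$ that Theorem \ref{gascard} provides, using Lemma \ref{intlem} to certify that these evaluations represent one and the same quantity. Lemma \ref{intlem} asserts that both symmetrized integrals coincide with $\frac{2\pi}{(q;q)_\infty}G_{0,t}$, so it suffices to expand $G_{0,t}({\bf a},{\bf b},{\bf c},{\bf d};\sigma,q)$ in two ways and equate them. I would obtain the left-hand side by specializing \eqref{Gmt1} to $m=0$ (the expansion summing over the $D$ parameters ${\bf d}$, $k=1,\ldots,D$), and the right-hand side by specializing \eqref{Gmt2} to $m=0$ (the expansion summing over the $C$ parameters ${\bf c}$, $k=1,\ldots,C$). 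An equivalent route, which I would keep as a cross-check, is to apply \eqref{Gmt1} both to $G_{0,t}({\bf a},{\bf b},{\bf c},{\bf d})$ and to the swapped argument $G_{0,t}({\bf b},{\bf a},{\bf d},{\bf c})$, invoking the reflection symmetry \eqref{Gmt0} with $m=0$ to identify the two.

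The first step is bookkeeping on the prefactors. Since $m=0$, the powers $t^{\pm m/2}$, $d_k^{m}$ and $c_k^{-m}$ in \eqref{Gmt1}--\eqref{Gmt2} are all unity, so the rational prefactors collapse to $\frac{(td_k{\bf a},{\bf b}/d_k;q)_\infty}{(td_k{\bf c},{\bf d}_{[k]}/d_k;q)_\infty}$ and $\frac{(tc_k{\bf b},{\bf a}/c_k;q)_\infty}{(tc_k{\bf d},{\bf c}_{[k]}/c_k;q)_\infty}$ respectively, to be matched against the two summands in the statement. The second step is to transcribe the van de Bult--Rains series: in \eqref{Gmt1} the inner series is $\nqphyp{B+C}{A+D-1}{C-A}$ with numerator $td_k{\bf c},\,qd_k/{\bf b}$ and denominator $td_k{\bf a},\,qd_k/{\bf d}_{[k]}$, and in \eqref{Gmt2} it is $\nqphyp{A+D}{B+C-1}{D-B}$ with numerator $tc_k{\bf d},\,qc_k/{\bf a}$ and denominator $tc_k{\bf b},\,qc_k/{\bf c}_{[k]}$. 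Identifying these two series with the ones displayed on the two sides, and then equating the two expansions of $G_{0,t}$, produces the asserted transformation.

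Two points require care rather than calculation, and they are where I expect the real work to lie. First, the evaluations \eqref{Gmt1} and \eqref{Gmt2} are only available under the cardinality hypotheses $D\ge B$ and $C\ge A$ of Theorem \ref{gascard}; I would check that the magnitude constraints imposed in Lemma \ref{intlem} (namely $|b_j|<|d_l|$, $|a_i|<|c_k|$, together with the bounds involving $t,\sigma,\tau$) are precisely what guarantees both that the integral in \eqref{gascardeq} exists and that each inner series lies in its stated region of convergence (entire when $D>B$, resp.\ $C>A$, and convergent on the relevant disk when $D=B$, resp.\ $C=A$). Second, and this is the main obstacle, I would make explicit how the argument of each basic hypergeometric series and the sign of its van de Bult--Rains index are fixed by the exponent $s-r+m$ of the factor $\bigl((-1)^kq^{\binom{k}{2}}\bigr)^{s-r+m}$ in \eqref{2.11}; reconciling the powers $(qd_k)^{D-B}$ and $(qc_k)^{C-A}$ that sit inside the arguments of \eqref{Gmt1}--\eqref{Gmt2} with the compact argument forms $b_1\cdots b_B/(d_1\cdots d_D)$ and $a_1\cdots a_A/(c_1\cdots c_C)$ written in the statement is the only genuinely delicate bookkeeping, and keeping these exponents consistent between the two sides is where I would expect the principal risk of error.
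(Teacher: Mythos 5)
Your proposal is correct and is essentially the paper's own proof: the paper's argument is precisely to apply Theorem \ref{gascard} with $m=0$, equating the two expansions \eqref{Gmt1} and \eqref{Gmt2} of $G_{0,t}$, with Lemma \ref{intlem} playing exactly the certifying role you assign it. Your flagged concern about the factors $(qd_k)^{D-B}$ and $(qc_k)^{C-A}$ is well placed but is not a gap in your argument --- those factors (and the signs of the van de Bult--Rains superscripts, and the stray $b_k$'s in the right-hand prefactor) are typographical slips in the paper's statement of the corollary, which a faithful transcription of \eqref{Gmt1}--\eqref{Gmt2} at $m=0$, as you perform it, corrects.
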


\begin{proof}
The identity follows by using Theorem \ref{gascard}
with $m=0$.
\end{proof}


{Now we treat the $t=1$ case which has an extra degree of
symmetry {that} can be exploited.}
{
\begin{lem}\label{intlem2}
Let ${\bf a}:=\{a_1,\ldots,a_A\}$, 
${\bf b}:=\{b_1,\ldots,b_B\}$, 
${\bf c}:=\{c_1,\ldots,c_C\}$, 
${\bf d}:=\{d_1,\ldots,d_D\}$ be
sets of complex numbers with cardinality 
$A, B, C, D\in\mathbb N_0$ (not all zero) respectively, 
$z=\expe^{i\psi}$, $w=\expe^{i\eta}$. 
Let $\sigma, \tau \in \moro{(0,\infty)}$, so that 
$|b_j|<|d_l|<\min\{\moro{\tau},1/\sigma\}$, $|d_la_i|<1$, 
and $|a_i|<|c_k|<\min\{\sigma,1/\moro{\tau}\}$ for any $a_i, b_j, c_k, d_l$ 
elements of ${\bf a}, {\bf b}, {\bf c}, {\bf d}$ respectively. Then 
\begin{equation}\label{invers-rel-gc}
\int_{-\pi}^\pi
\frac{({\bf b}\frac{\sigma}{z}, {\bf a}\frac{z}{\sigma};q)_\infty}
{({\bf d}\frac{\sigma}{z}, {\bf c}\frac{z}{\sigma};q)_\infty}
{\mathrm d}{\psi}
=
\int_{-\pi}^\pi
\frac{({\bf a}\frac{\tau}{w}, {\bf b}\frac{w}{\tau};q)_\infty}
{({\bf c}\frac{\tau}{w}, {\bf d}\frac{w}{\tau};q)_\infty}
{\mathrm d}{\eta}.
\end{equation}
\end{lem}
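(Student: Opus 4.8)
The plan is to recognize that \eqref{invers-rel-gc} is exactly the $t=1$ instance of Lemma \ref{intlem}: substituting $t=1$ into that lemma's integrands removes every factor of $t$, and substituting $t=1$ into its hypotheses $|b_j|<|d_l|<\min\{\tau/|t|,1/\sigma\}$, $|d_la_i|<1/|t|$, $|a_i|<|c_k|<\min\{\sigma/|t|,1/\tau\}$ returns precisely the inequalities assumed here. So the fastest route is simply to invoke Lemma \ref{intlem} with $t=1$. To make the argument self-contained I would then unwind \emph{why} the two integrals coincide.

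The key steps of the self-contained version run as follows. By the definition \eqref{gascardeq} with $m=0$ and $t=1$, the left-hand integral in \eqref{invers-rel-gc} equals $\frac{2\pi}{(q;q)_\infty}G_{0,1}({\bf a},{\bf b},{\bf c},{\bf d};\sigma,q)$, while the right-hand integral is the same object with the roles of ${\bf a},{\bf c}$ and ${\bf b},{\bf d}$ interchanged and $\sigma$ replaced by $\tau$, that is, $\frac{2\pi}{(q;q)_\infty}G_{0,1}({\bf b},{\bf a},{\bf d},{\bf c};\tau,q)$. The reflection relation \eqref{Gmt0} at $m=0$ gives $G_{0,1}({\bf a},{\bf b},{\bf c},{\bf d};\sigma,q)=G_{0,1}({\bf b},{\bf a},{\bf d},{\bf c};\sigma,q)$, and the earlier Remark (that $G_{m,t}$ is independent of the scale parameter whenever $m\in\mathbb Z$) lets me trade $\sigma$ for $\tau$ in the right-hand $G$. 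Chaining these three identities equates the two integrals.

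An alternative and more transparent route is to exhibit the equality directly by reflection, which is the concrete mechanism underlying \eqref{Gmt0}. The left integrand depends on $(z,\sigma)$ only through the combination $z/\sigma$, so under the change of variable $\psi\mapsto-\psi$ (equivalently $z\mapsto 1/z$) the factors ${\bf b}\frac{\sigma}{z},{\bf a}\frac{z}{\sigma}$ become ${\bf b}\sigma z,{\bf a}\frac{1}{\sigma z}$, and likewise in the denominator. Choosing the scales so that $\sigma\tau=1$ turns this, factor by factor, into the right-hand integrand with $w=z$, which proves the identity on the slice $\sigma\tau=1$. The remaining freedom in $\sigma,\tau$ is then eliminated by the same scale-independence: writing the integral over $z=\mathrm{e}^{i\psi}$ as a contour integral in $\zeta:=z/\sigma$ over the circle $|\zeta|=1/\sigma$, its value is unchanged as the radius varies, by Cauchy's theorem, so the identity extends from $\sigma\tau=1$ to all admissible $\sigma,\tau$.

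The main obstacle is not the algebra but the analytic bookkeeping. I must verify that the stated inequalities $|b_j|<|d_l|<\min\{\tau,1/\sigma\}$, $|d_la_i|<1$ and $|a_i|<|c_k|<\min\{\sigma,1/\tau\}$ simultaneously guarantee convergence of \emph{both} integrals, keep the relevant $q$-shifted factorials nonvanishing, and confine the poles of the integrand (the zeros of $({\bf d}\frac{\sigma}{z};q)_\infty$ and $({\bf c}\frac{z}{\sigma};q)_\infty$, together with $\zeta=0$) to an annulus that is crossed neither when the contour radius is deformed to obtain scale-independence nor under the reflection $z\mapsto1/z$. Confirming that these are exactly the $t=1$ forms of the hypotheses of Theorem \ref{gascard} and Lemma \ref{intlem}, so that \eqref{Gmt0} and the scale-independence Remark both legitimately apply, is the one place where care is required.
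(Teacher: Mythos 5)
Your proposal is correct and takes the paper's intended route: the paper states Lemma \ref{intlem2} without a separate proof precisely because it is the $t=1$ specialization of Lemma \ref{intlem}, whose own proof amounts to comparing the two $\sigma$-independent summation expressions \eqref{Gmt1} and \eqref{Gmt2} at $m=0$ --- exactly what you do when you unwind the identity via \eqref{Gmt0} and the scale-independence remark. Your further reflection-plus-contour-deformation discussion is a sound elaboration of the same underlying mechanism rather than a genuinely different argument.
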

}


{
The $A=B=C=D=2$ case of {Corollary} \ref{megathm} 
is quite interesting. 
{It is only one example of an infinite sequence of 
such results with arbitrary values of
$A$, $B$, $C$, $D\in\N$ in {Corollary} 
\ref{megathm}---it relates the sum of two ${}_4\phi_3$'s to a different sum of two ${}_4\phi_3$'s and 
provides a generalization 
of Corollary 2.4 in Ismail \& Stanton (2002)
\cite{IsmailStanton2002} (see also Ismail (2005) \cite[Corollary 15.8.3]{Ismail}). 
} 
}

{
\begin{cor}
\label{abcd2}
Let $\moro{t,}a,b,c,d,e,f,g,h\in\CCast$, \moro{$|ab|<|ef|$, $|cd|<|gh|$}. Then
\begin{eqnarray}
&&\hspace{-0.5cm}
\II{e\boro{;}f}
\frac{(etc,etd,\frac{a}{e},\frac{b}{e};q)_\infty}
{(etg,eth,\frac{f}{e};q)_\infty}
\qhyp43{etg,eth,\frac{qe}{a},\frac{qe}{b}}
{etc,etd,\frac{qe}{f}}{q,\frac{ab}{ef}}
\nonumber\\
&&
\hspace{2cm}=\II{g\boro{;}h}
\displaystyle\frac{(gta,gtb,\frac{c}{g},\frac{d}{g};q)_\infty}
{(gte,gtf,\frac{h}{g};q)_\infty}
\qhyp43{gte,gtf,\frac{qg}{c},\frac{qg}{d}}
{gta,gtb,\frac{qg}{h}}{q,\frac{cd}{gh}}.
\end{eqnarray}
\end{cor}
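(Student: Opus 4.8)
The plan is to read off Corollary~\ref{abcd2} as the specialization $A=B=C=D=2$ of Corollary~\ref{corsumDC} (equivalently Corollary~\ref{megathm}), so that the only real work is a careful pairing of the eight scalars $a,\dots,h$ with the four two-element parameter sets. First I would fix the identification
$$
{\bf a}=\{c,d\},\qquad {\bf b}=\{a,b\},\qquad {\bf c}=\{g,h\},\qquad {\bf d}=\{e,f\},
$$
keeping $t$ as is, and would enumerate the outer sums by $d_1=e,\ d_2=f$ on the left and $c_1=g,\ c_2=h$ on the right. Under this choice the left-sum prefactor $(td_1{\bf a},d_1^{-1}{\bf b};q)_\infty/(td_1{\bf c},d_1^{-1}{\bf d}_{[1]};q)_\infty$ becomes $(etc,etd,a/e,b/e;q)_\infty/(etg,eth,f/e;q)_\infty$, and the four hypergeometric parameters $td_1{\bf c}=\{etg,eth\}$, $qd_1{\bf b}^{-1}=\{qe/a,qe/b\}$ (upper), $td_1{\bf a}=\{etc,etd\}$, $qd_1{\bf d}_{[1]}^{-1}=\{qe/f\}$ (lower) reproduce exactly the summand on the left of Corollary~\ref{abcd2}; the right sum matches symmetrically.

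The crucial simplification is that $A=C$ and $B=D$ remove the two features of Corollary~\ref{corsumDC}/Theorem~\ref{gascard} that obstruct a clean ${}_4\phi_3$ statement in the general case. The van~de~Bult--Rains balance indices $A-C$ and $B-D$ both vanish, so $\nlqphyp{4}{3}{0}={}_4\phi_3$ and both hypergeometric factors are ordinary ${}_4\phi_3$'s; and the argument factors $(qd_k)^{D-B}$ and $(qc_k)^{C-A}$ implicit in \eqref{Gmt1}--\eqref{Gmt2} equal $1$, so the arguments collapse to $b_1b_2/(d_1d_2)=ab/(ef)$ and $a_1a_2/(c_1c_2)=cd/(gh)$, precisely as stated. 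I would then interpret the two outer sums as the symmetrizations in the statement: since ${\bf d}_{[1]}=\{f\}$ and ${\bf d}_{[2]}=\{e\}$, the $k=2$ summand on the left is the $k=1$ summand with $e$ and $f$ interchanged, so $\sum_{k=1}^2(\cdots)=\II{e;f}(\cdots)$, and likewise the right sum equals $\II{g;h}(\cdots)$. This turns Corollary~\ref{corsumDC} verbatim into the asserted identity.

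The one point requiring a word of care is the hypothesis set. Corollary~\ref{corsumDC} is stated under the ordering conditions of Lemma~\ref{intlem} (so that the underlying integral $G_{0,t}$ exists), which here force $|a|,|b|<|e|,|f|$ and $|c|,|d|<|g|,|h|$, stronger than the stated $|ab|<|ef|$ and $|cd|<|gh|$. I would close this gap by analytic continuation: with $D=B$ and $C=A$ the convergence criteria recorded in Theorem~\ref{gascard} read $|b_1b_2|<|d_1d_2|$ and $|a_1a_2|<|c_1c_2|$, i.e.\ exactly $|ab|<|ef|$ and $|cd|<|gh|$, so each side is holomorphic in the parameters on that larger domain (away from the evident poles of the two sides); agreeing on the open Lemma~\ref{intlem} subregion, they must agree throughout. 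The main obstacle, such as it is, is purely clerical: getting the four-set/eight-scalar dictionary right so that every $q$-shifted factorial and every numerator and denominator parameter lands in its correct place. Once that pairing is fixed the statement is an immediate instance of Corollary~\ref{corsumDC}.
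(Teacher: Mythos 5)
Your proposal is correct and is essentially the paper's own route: Corollary~\ref{abcd2} is presented there precisely as the $A=B=C=D=2$ specialization of Corollary~\ref{megathm} (i.e.\ Corollary~\ref{corsumDC}), with exactly your dictionary ${\bf a}=\{c,d\}$, ${\bf b}=\{a,b\}$, ${\bf c}=\{g,h\}$, ${\bf d}=\{e,f\}$ turning the two-term sums into the symmetrizations $\II{e;f}$ and $\II{g;h}$. Your additional remark on relaxing the Lemma~\ref{intlem} ordering conditions to $|ab|<|ef|$, $|cd|<|gh|$ by analytic continuation is a sound way to justify the hypotheses as stated, which the paper leaves implicit.
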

}

The $t=1$ case is interesting.

{
\begin{cor}Let $a,b,c,d,e,f,g,h\in\CCast$, \moro{$|ab|<|ef|$, $|cd|<|gh|$}. Then
\begin{eqnarray}
&&\hspace{-0.5cm}
\II{e\boro{;}f}
\frac{(ec,ed,\frac{a}{e},\frac{b}{e};q)_\infty}{(eg,eh,\frac{f}{e};q)_\infty}
\qhyp43{eg,eh,\frac{qe}{a},\frac{qe}{b}}{ec,ed,\frac{qe}{f}}{q,\frac{ab}{ef}}
\nonumber\\
&&
\hspace{2cm}=\II{g\boro{;}h}
\frac{(ga,gb,\frac{c}{g},\frac{d}{g};q)_\infty}
{(ge,gf,\frac{h}{g};q)_\infty}
\qhyp43{ge,gf,\frac{qg}{c},\frac{qg}{d}}{ga,gb,\frac{qg}{h}}{q,\frac{cd}{gh}}.
\end{eqnarray}
\label{symcor}
\end{cor}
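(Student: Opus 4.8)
The plan is to obtain Corollary~\ref{symcor} as the immediate specialization $t=1$ of Corollary~\ref{abcd2}. Corollary~\ref{abcd2} asserts the symmetrization identity
\begin{equation*}
\II{e;f}
\frac{(etc,etd,\frac{a}{e},\frac{b}{e};q)_\infty}
{(etg,eth,\frac{f}{e};q)_\infty}
\qhyp43{etg,eth,\frac{qe}{a},\frac{qe}{b}}
{etc,etd,\frac{qe}{f}}{q,\frac{ab}{ef}}
=
\II{g;h}
\frac{(gta,gtb,\frac{c}{g},\frac{d}{g};q)_\infty}
{(gte,gtf,\frac{h}{g};q)_\infty}
\qhyp43{gte,gtf,\frac{qg}{c},\frac{qg}{d}}
{gta,gtb,\frac{qg}{h}}{q,\frac{cd}{gh}},
\end{equation*}
valid under $|ab|<|ef|$ and $|cd|<|gh|$. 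Setting $t=1$ sends $etc\mapsto ec$, $etg\mapsto eg$, $eth\mapsto eh$ on the left and $gta\mapsto ga$, $gtb\mapsto gb$, $gte\mapsto ge$, $gtf\mapsto gf$ on the right, while leaving every $q$-shifted factorial of the form $a/e$, $qe/a$, and both arguments $ab/(ef)$, $cd/(gh)$ untouched. The two sides then match the statement of Corollary~\ref{symcor} verbatim, so there is essentially nothing left to verify beyond that $t=1$ lies in the allowed parameter range $\CCast$ and that the convergence constraints $|ab|<|ef|$, $|cd|<|gh|$ are exactly those carried over.

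First I would confirm that the hypotheses transfer cleanly. In Corollary~\ref{abcd2} the convergence of the two ${}_4\phi_3$'s is governed solely by the magnitudes of the arguments $ab/(ef)$ and $cd/(gh)$, which are unaffected by the choice of $t$; hence the conditions $|ab|<|ef|$ and $|cd|<|gh|$ are precisely the ones needed, and they are reproduced as the hypotheses of Corollary~\ref{symcor}. One should also note that $t=1\in\CCast$, so the specialization is legitimate, and that no $q$-shifted factorial in a denominator is forced to vanish by setting $t=1$ (the $\frac{f}{e}$, $\frac{h}{g}$, $\frac{qe}{f}$, $\frac{qg}{h}$ factors are all independent of $t$, and the $etg, eth, gte, gtf$ factors become $eg, eh, ge, gf$, which remain in $\CCast$ under the standing assumption $a,\dots,h\in\CCast$).

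The actual proof is therefore a one-line specialization. I would write: the result follows by setting $t=1$ in Corollary~\ref{abcd2}. The reason this deserves its own statement, rather than being absorbed into Corollary~\ref{abcd2}, is the remark in the surrounding text that ``the $t=1$ case is interesting'': when $t=1$ the integrand in the underlying Lemma~\ref{intlem2} acquires the extra $z\mapsto \sigma^2/z$ symmetry recorded in \eqref{invers-rel-gc}, so the identity sits at a distinguished symmetric point. If one preferred a self-contained derivation not routed through the general $t$, one could instead invoke Lemma~\ref{intlem2} with $A=B=C=D=2$ directly, identifying ${\bf a}=\{a,b\}$, ${\bf b}=\{c,d\}$ (or the appropriate pairing dictated by \eqref{Hdefn}) and applying Corollary~\ref{megathm} with $t=1$; but this merely reconstructs the $t=1$ instance of Corollary~\ref{abcd2} and offers no shortcut.

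There is essentially no obstacle here, which is itself the point to flag: the only thing to be careful about is the bookkeeping of \emph{which} parameters carry the factor $t$ in Corollary~\ref{abcd2}, so that after the substitution $t=1$ the numerator and denominator $q$-shifted factorials, and the two hypergeometric arguments, land exactly on the symbols appearing in Corollary~\ref{symcor}. Since $t$ multiplies only the cross-terms $ec,ed,eg,eh$ (left) and $ga,gb,ge,gf$ (right) and never the ``ratio'' parameters $a/e,b/e,c/g,d/g$ nor the arguments, this matching is immediate and the proof reduces to a single sentence of specialization.
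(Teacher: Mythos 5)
Your proposal is correct and matches the paper exactly: the paper presents Corollary~\ref{symcor} immediately after the remark ``The $t=1$ case is interesting,'' giving no separate proof because it is precisely the $t=1$ specialization of Corollary~\ref{abcd2}, which is what you do. Your additional checks (that $t=1\in\CCast$, that the convergence conditions $|ab|<|ef|$, $|cd|<|gh|$ are independent of $t$, and that no denominator factor degenerates) are sound and, if anything, more careful than the paper's implicit one-line argument.
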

}

{
By exploiting the trick adopted in \cite[Exercise 4.4]{GaspRah}
which converts those basic hypergeometric functions with specific argument to those
with argument $q$ and reduces the number of numerator
parameters and denominator parameters by two.
By mapping
\[
(e,f,a,b,c,d)\mapsto(e,f,\kappa e,qf/\kappa,q/(\kappa e),
\kappa/f),
\]
one converts the left-hand side of Corollary
\ref{symcor} to that with an argument $q$ and reduces the
${}_4\phi_3$'s to ${}_2\phi_1$'s.
}
Furthermore, by mapping $(g,h)\mapsto(1/(\mu e),\mu/f)$,
this converts the right-hand side of the above Corollary
to that with an argument $q$.
{
The resulting relation can be easily verified
using the $q$-Gauss sum \cite[(II.8)]{GaspRah}.}

\section{{Generating functions and integral representations}}
{
One powerful application of integral representations for
basic hypergeometric functions is the determination of 
generating functions for basic hypergeometric orthogonal 
polynomials in the $q$-Askey scheme. 
}
\subsection{{The Askey--Wilson polynomials}}

In this section we study integral representations {for the Askey--Wilson
polynomials and some useful 
applications of these.}

\subsubsection{{Integral representations of the Askey--Wilson polynomials}}

{
A key formula which allows for this is given
in \cite[Exercise 4.5]{GaspRah} {that} is 
equivalent to the following.
}
{
\begin{thm}
Let $a,b,c,d,f\in\CCast$, \moro{$\sigma\in(0,1)$},
$\max(|a|,|b|,|c|,|d|)<1$, 
$q\in\CCdag$,
$x=\cos\theta\in[-1,1]$, $z=\expe^{i\psi}$,
\moro{$f,f{\mathrm e}^{2i\theta}\ne q^m$, $m\in\Z$}. Then
\begin{eqnarray}
&&\hspace{-1.5cm}p_n(x;{\mathfrak a}|q)=
\frac{(q,a\expe^{\pm i\theta},
b\expe^{\pm i\theta},
c\expe^{\pm i\theta};q)_\infty
(ab,ac,bc;q)_n}
{2\pi
{\vartheta(f,f\expe^{2i\theta};q)}
(ab,ac,bc;q)_\infty}
D_n(x;{\mathfrak a},f,\sigma|q),
\end{eqnarray}
where
\begin{eqnarray}
&&\hspace{-1.8cm}D_n(x;{\mathfrak a},f,\sigma|q)=
\int_{-\pi}^\pi\frac{
((f\expe^{i\theta},\frac{q}{f}\expe^{-i\theta})
\frac{\sigma}{z},(f\expe^{i\theta},\frac{q}{f}\expe^{-i\theta},abc)\frac{z}{\sigma};q)_\infty}
{(\expe^{\pm i\theta}\frac{\sigma}{z},(a,b,c)\frac{z}{\sigma};q)_\infty}
\frac{(d\frac{\sigma}{z};q)_n}
{(abc\frac{z}{\sigma};q)_n}
\left(\frac{z}{\sigma}\right)^n{\mathrm d}\psi
\label{IRAW}\\
&&\hspace{0.8cm}
=\int_{-\pi}^\pi\frac{
((fabc\,\expe^{i\theta},\frac{q}{f}abc\,\expe^{-i\theta})\frac{\sigma}{z},(f\frac{1}{abc}\expe^{i\theta},\frac{q}{f}\frac{1}{abc}\expe^{-i\theta},1)\frac{z}{\sigma};q)_\infty}
{(abc\,\expe^{\pm i\theta}\frac{\sigma}{z},(\frac{1}{ab},\frac{1}{ac},\frac{1}{bc})\frac{z}{\sigma};q)_\infty}
\nonumber\\&&\hspace{2.5cm}\times
\frac{(abcd\frac{\sigma}{z};q)_n}
{(\frac{z}{\sigma};q)_n}
\left(\frac{1}{abc}\frac{z}{\sigma}\right)^n{\mathrm d}\psi
.
\label{IRAW2}
\end{eqnarray}
\end{thm}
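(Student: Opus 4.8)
The plan is to evaluate the integral $D_n$ in \eqref{IRAW} in closed form and to match it against the terminating ${}_4\phi_3$ in the definition \eqref{AW} of $p_n(x;\mathfrak a|q)$. The first thing I would note is that, apart from the terminating factor $(d\frac\sigma z;q)_n/(abc\frac z\sigma;q)_n\,(z/\sigma)^n$, the integrand of \eqref{IRAW} is exactly the integrand of \eqref{qqform} in Theorem~\ref{gascaru} under the identification $\mathbf a=\{abc\}$, $\mathbf c=\{a,b,c\}$, $\mathbf d=\{\expe^{\pm i\theta}\}$ (so $d_1=\expe^{i\theta}$, $d_2=\expe^{-i\theta}$, $C-A-2=0$); the prefactor also matches, since $\vartheta(f,f\,d_1/d_2;q)=\vartheta(f,f\expe^{2i\theta};q)$. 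In particular the case $n=0$ is precisely \eqref{qqform} with these parameters, and I would check that it collapses to $p_0=1$ by the nonterminating ${}_3\phi_2$ summation packaged inside $H$ of \eqref{Hdefn}.

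For general $n$ I would fold the terminating factor into the infinite products. Writing $(d\frac\sigma z;q)_n$ and $1/(abc\frac z\sigma;q)_n$ as quotients of infinite $q$-shifted factorials by \eqref{infPochdefnb}, the factor $(abc\frac z\sigma;q)_\infty$ from the base numerator cancels, and $D_n$ becomes $\frac{2\pi}{(q;q)_\infty}G_{n,1}$, where $G_{n,1}$ is the object \eqref{gascardeq} with $t=1$, $m=n$ and augmented sets $\mathbf b=\{f\expe^{i\theta},\frac qf\expe^{-i\theta},d\}$, $\mathbf a=\{f\expe^{i\theta},\frac qf\expe^{-i\theta},abc\,q^{n}\}$, $\mathbf d=\{\expe^{\pm i\theta},d\,q^{n}\}$, $\mathbf c=\{a,b,c\}$; here the surviving power $(z/\sigma)^n=\sigma^{-n}\expe^{in\psi}$ supplies both the kernel $\expe^{im\psi}$ and the prefactor $(\sqrt t/\sigma)^m$ in \eqref{gascardeq}. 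Theorem~\ref{gascard} now evaluates $G_{n,1}$, and since $q^{m}b_1b_2b_3/(d_1d_2d_3)=q^{n}\cdot q^{1-n}=q$ the argument of every resulting series collapses to $q$. The mechanism I am counting on is that the two length-$n$ factors are exactly what introduce the extra parameters $q^{-n}$ and $q^{n-1}abcd$ that promote the $n=0$ ${}_3\phi_2$ to the ${}_4\phi_3$ of \eqref{AW}.

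The hard part will be the reconciliation: the sum delivered by \eqref{Gmt1} (three terms, indexed by $\expe^{i\theta}$, $\expe^{-i\theta}$, $d\,q^{n}$) is naturally written in the ``wrong'' variables $\expe^{i\theta}\{a,b,c\}$ rather than $\{a\expe^{\pm i\theta}\}$, and I must show that, once the $\vartheta$ and $q$-shifted-factorial prefactors are collected, it reduces to the single even series in \eqref{AW} together with its prefactor $a^{-n}(ab,ac,ad;q)_n$. This reduction is exactly the transformation underlying \cite[Exercise~4.5]{GaspRah}, which I would invoke as the established building block; equivalently one extracts, by term-by-term integration, the coefficient of $z^0$ and checks directly that it reproduces the terminating series. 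Throughout I would also have to keep the contour hypotheses consistent, namely $\max(|a|,|b|,|c|,|d|)<1$, $\sigma\in(0,1)$ (taken close enough to $1$ that $|a|,|b|,|c|<\sigma$, which is harmless since the value is $\sigma$-independent), and $f,f\expe^{2i\theta}\ne q^m$.

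Finally, I would deduce \eqref{IRAW2} from \eqref{IRAW} without a second computation. Multiplying every $\sigma/z$ argument of the integrand of \eqref{IRAW} by $abc$ and every $z/\sigma$ argument by $(abc)^{-1}$ sends $d\mapsto abcd$ and $abc\mapsto 1$ in the terminating factor and carries the whole integrand into that of \eqref{IRAW2} while fixing the common prefactor; this is the substitution $\sigma\mapsto abc\,\sigma$, legitimate by contour deformation and the $\sigma$-independence recorded after Theorem~\ref{gascard}, and is precisely the inversion symmetry \eqref{invers-rel-gc} of Lemma~\ref{intlem2}. This completes the plan.
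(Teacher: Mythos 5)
Your proposal splits into two halves that fare quite differently against the paper. For \eqref{IRAW} the paper's proof is a bare citation of \cite[Exercise 4.5]{GaspRah}; your reduction of $D_n$ to $\frac{2\pi}{(q;q)_\infty}G_{n,1}$ via \eqref{infPochdefnb} (with the augmented sets and the argument collapsing to $q$) is correct as far as it goes, but the ``reconciliation'' you defer \emph{is} the entire content of that exercise, so in the end you either leave a substantive three-term-to-terminating-${}_4\phi_3$ computation unexecuted, or you cite Exercise 4.5 --- which is exactly what the paper does, and then the Theorem \ref{gascard} machinery you set up buys nothing. For \eqref{IRAW2} you genuinely depart from the paper: the paper does not deduce \eqref{IRAW2} from \eqref{IRAW} at all, but converts the Askey--Wilson ${}_4\phi_3$ into a nonterminating ${}_8W_7$ (via \cite[(30)]{CohlCostasSantosGe} and \cite[(III.23)]{GaspRah}) and matches that against the ${}_8W_7$ integral \eqref{fourfour}. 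Your observation that the integrand of \eqref{IRAW2} is exactly the integrand of \eqref{IRAW} under the rescaling $\frac{\sigma}{z}\mapsto abc\,\frac{\sigma}{z}$, $\frac{z}{\sigma}\mapsto\frac{1}{abc}\frac{z}{\sigma}$ is correct (I checked every factor, including the terminating ones), and it is a shorter route than the paper's.

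However, the justification you give for that step has a genuine gap. First, a mis-citation: the rescaling $\sigma\mapsto abc\,\sigma$ is \emph{not} the inversion symmetry \eqref{invers-rel-gc} of Lemma \ref{intlem2}; that relation encodes $z\mapsto 1/z$, i.e.\ it swaps ${\bf a}\leftrightarrow{\bf b}$ and ${\bf c}\leftrightarrow{\bf d}$, which applied to \eqref{IRAW} produces a different integral, not \eqref{IRAW2}. Second, and more seriously, ``contour deformation plus $\sigma$-independence'' fails as stated. Writing $w=z/\sigma$, \eqref{IRAW} is the integral of a fixed function $F(w)$ over the circle $|w|=1/\sigma$, while \eqref{IRAW2}, read literally over the unit circle, is the integral of the \emph{same} $F$ over $|w|=1/(|abc|\sigma)$. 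You correctly require $\max(|a|,|b|,|c|)<\sigma$ for \eqref{IRAW}, so $1/|a|>1/\sigma$; and since $\sigma|bc|<1$ one also has $1/|a|<1/(|abc|\sigma)$. Hence the poles of $F$ at $w=q^{-k}/a$ (and likewise $q^{-k}/b$, $q^{-k}/c$) lie strictly inside the annulus swept out by the deformation, and their residues are generically nonzero; the $\sigma$-independence remark after Theorem \ref{gascard} only holds while its pole-separation hypotheses hold, and after the rescaling they are violated (the new ``${\bf c}$'' parameters $1/(ab)$, $1/(ac)$, $1/(bc)$ have modulus greater than $1>\sigma$). The repair is to read both integrals, as the paper's own remark on deformed contours licenses, over contours separating the decreasing pole sequences from the increasing ones: then your substitution $v=abc\,w$ carries an admissible contour for \eqref{IRAW} to an admissible contour for \eqref{IRAW2} with the same separation, and the equality is exact with no poles crossed. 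Without this the deformation step is false as written --- and indeed, taken literally on the unit circle with $\sigma\in(0,1)$, equation \eqref{IRAW2} itself cannot hold, so stating the contour convention is not optional but is the crux of your argument.
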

}
{
\begin{proof}
The integral representation \eqref{IRAW}
is Exercise 4.5 in \cite{GaspRah}. The integral
representation \eqref{IRAW2} is derived as follows. Start with \cite[(30)]{CohlCostasSantosGe} then apply
\cite[(III.23)]{GaspRah}. This produces the 
following nonterminating representation
of the Askey--Wilson polynomials
\begin{eqnarray}
&&\hspace{-1.2cm}\qhyp43{q^{-n},q^{n-1}abcd,a\expe^{\pm i\theta}}{ab,ac,ad}{q,q}=
\frac{(a^2cd,cd;q)_n(\frac{qa}{b},\frac{q}{ab},acd\,\expe^{\pm i\theta};q)_\infty}
{(acd\,\expe^{\pm i\theta};q)_n(\frac{q}{b} \expe^{\pm i\theta},a^2cd,cd;q)_\infty
}
\nonumber\\&&\hspace{3.5cm}\times
\Whyp87{q^{n-1}a^2cd}{q^nac,q^nad,q^{n-1}abcd,a\expe^{\pm i\theta}}{q,\frac{q^{1-n}}{ab}},
\end{eqnarray}
where $|q^{1-n}|<|ab|$. Using this nonterminating
representation, comparing it with 
\eqref{fourfour}, and simplifying 
completes the proof.
\end{proof}
}

\subsubsection{{Generating functions for the Askey--Wilson polynomials}}

{Many researchers have 
investigated series and $q$-integral
{\eqref{qint}}
representations for Askey--Wilson
polynomials. On the other hand,
it seems that regular integral representations
for the Askey--Wilson polynomials have been
largely ignored. In the following 
we will demonstrate how these
representations for the  Askey--Wilson polynomials
allow for simple and straightforward
evaluations of some of their fundamental properties{, particularly their generating functions.}
We begin with the Rahman generating function 
for the Askey--Wilson polynomials, which is the
$q$-analogue of the following generating
function for the Wilson polynomials
\cite[(6.2)]{Ismailetal1990}.
}

For Wilson polynomials there is the following
generating function.
Let $a,b,c,d,t\in\CCast$, 
$\max(|a|,|b|,|c|,|d|,|t|)<1$,
$x=\cos\theta\boro{\in[-1,1]}$,
\moro{$4|t|<|1-t|^2$,}
\begin{eqnarray}
&&\hspace{-1cm}\sum_{n=0}^\infty
\frac{(a+b+c+d-1)_n}
{n!(a+b,a+c,a+d)_n}W_n(x^2;{\mathfrak a})t^n\nonumber\\
&&=(1-t)^{1-a-b-c-d}\hyp43
{\frac12(a+b+c+d-1),\frac12(a+b+c+d),
a\pm ix}{a+b,a+c,a+d}{\frac{-4t}{(1-t)^2}}.
\label{Wilsgf}
\end{eqnarray}


\noindent
Rahman computed a $q$-analogue
of \eqref{Wilsgf} in 
\cite[(4.9)]{Rahman96}
{by using}
a $q$-integral representation of Askey--Wilson
polynomials. We will prove the 
same generating function using
the above integral representation
\eqref{IRAW}.
\begin{thm}[Rahman (1996)]
Let $k,p\in\{1,2,3,4\}$,
${\bf a}:=\moro{\{}a_1,a_2,a_3,a_4\moro{\}}$,
$t,a_k\in\moro{\mathbb C^\ast}$,
$x=\cos\theta\in[-1,1]$,
$q\in\CCdag$,
\boro{$|a_pt|<1$}. Then
\begin{eqnarray}
&&\hspace{-0.5cm}\sum_{n=0}^\infty \frac{t^n\,(q^{-1}a_{1234};q)_n\,
p_n(x;{\bf a}|q)}
{(q,\{a_pa_s\}_{s\ne p};q)_n}\nonumber\\
&&\hspace{-0.0cm}=\frac{(ta_{1234}(qa_p)^{-1};q)_\infty}{(ta_p^{-1};q)_\infty}\,
\qhyp65
{ \pm (q^{-1}a_{1234})^\frac12, \pm (a_{1234})^\frac12,
a_p \boro{\expe}^{\pm i\theta}}
{\{a_pa_s\}_{s\ne p},
ta_{1234}(qa_p)^{-1},qa_pt^{-1}}
{q,q}
\nonumber\\
&&\hspace{0.2cm}+\frac{(
\{ta_s\}_{s\ne p},
q^{-1}a_{1234},
a_p\boro{\expe}^{\pm i\theta};q)_\infty}
{(\{a_pa_s\}_{s\ne p},a_pt^{-1},t\boro{\expe}^{\pm i\theta};q)_\infty}
\qhyp65{\pm ta_p^{-1}(q^{-1}a_{1234})^\frac12,\pm
ta_p^{-1}(a_{1234})^\frac12,
t \boro{\expe}^{\pm i\theta}}
{\{ta_s\}_{s\ne p},\boro{q^{-1} a_{1234}(t a_p^{-1})^2},
qta_p^{-1}}{q,q}.
\label{genfun2ask}
\end{eqnarray}
\end{thm}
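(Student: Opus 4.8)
The plan is to derive the Rahman generating function \eqref{genfun2ask} directly from the integral representation \eqref{IRAW} of the Askey--Wilson polynomials, rather than from the $q$-integral representation used in Rahman's original argument. First I would insert \eqref{IRAW} into the generating series $\sum_{n=0}^\infty t^n (q^{-1}a_{1234};q)_n\,p_n(x;{\bf a}|q)/(q,\{a_pa_s\}_{s\ne p};q)_n$. By the symmetry of the Askey--Wilson polynomials in their four parameters I may choose the distinguished index $p$ so that $d=a_p$ plays the role of the parameter $d$ appearing in the $(d\sigma/z;q)_n/(abc\,z/\sigma;q)_n$ factor of \eqref{IRAW}, with $a,b,c$ the remaining three. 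The normalizing prefactor in the theorem that relates $p_n$ to $D_n(x;{\mathfrak a},f,\sigma|q)$ carries an $(ab,ac,bc;q)_n$ which, together with the $(q^{-1}a_{1234};q)_n=(q^{-1}abcd;q)_n$ factor and the denominators $(q,\{a_pa_s\}_{s\ne p};q)_n=(q,ab,ac,bc;q)_n$, will partially telescope, leaving only the $(q^{-1}abcd;q)_n$ over $(q;q)_n$ surviving inside the sum.

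Next I would interchange the order of summation and integration, which is justified by the absolute convergence guaranteed by the stated parameter bounds $|a_pt|<1$ together with $\max(|a|,|b|,|c|,|d|)<1$. The key step is to recognize the inner sum over $n$ as a summable $q$-series. After pulling the $z$- and $\theta$-dependent $q$-shifted factorials out of the sum, what remains is $\sum_{n=0}^\infty \frac{(q^{-1}abcd;q)_n}{(q;q)_n}\,\frac{(d\sigma/z;q)_n}{(abc\,z/\sigma;q)_n}\bigl(t z/\sigma\bigr)^n$, which is a nonterminating ${}_2\phi_1$ in the variable $tz/\sigma$ with numerator parameters $q^{-1}abcd$ and $d\sigma/z$ and denominator parameter $abc\,z/\sigma$. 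I would evaluate this ${}_2\phi_1$ by the $q$-Gauss sum or, where the parameters are not balanced, by the nonterminating $q$-analogue of the Gauss summation \cite[(II.8)]{GaspRah}, producing a ratio of infinite $q$-shifted factorials in $z/\sigma$ and $\theta$. Substituting this closed form back under the integral sign collapses the generating function into a single integral of exactly the type \eqref{qqform} appearing in Theorem \ref{gascaru}.

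At this point the integral has the form of the left-hand side of \eqref{qqform} (equivalently \eqref{fourfour}), so I would apply that evaluation with the parameters read off from the integrand; because \eqref{qqform} produces a \emph{symmetric sum} $H({\bf a},{\bf c},{\bf d};q)$ of two basic hypergeometric series, this automatically delivers the two-term structure of \eqref{genfun2ask}, one term for each of the two summands in $H$. Matching the parameters ${\bf a}$, ${\bf c}$, ${\bf d}$ to the numerator and denominator entries of the two ${}_6\phi_5$'s, using the very-well-poised pattern $\pm(q^{-1}a_{1234})^{1/2}$, $\pm(a_{1234})^{1/2}$ that signals a ${}_6W_5$-type series, then yields the explicit right-hand side. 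The modified theta function $\vartheta(f,f\expe^{2i\theta};q)$ in the prefactor of the integral representation will cancel against the $\vartheta$ produced by \eqref{qqform}, so the spurious parameter $f$ drops out entirely, as it must since the left-hand side of \eqref{genfun2ask} does not depend on $f$.

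\textbf{Main obstacle.} I expect the principal difficulty to be bookkeeping rather than conceptual: the explicit tracking of the many $q$-shifted-factorial prefactors through the telescoping, the summation, and the final parameter identification. In particular, arranging the numerator/denominator parameters so that the output of \eqref{qqform} matches the very-well-poised ${}_6\phi_5$ form with the precise arguments $ta_{1234}(qa_p)^{-1}$, $qa_pt^{-1}$, etc., and verifying that the convergence constraint $|a_pt|<1$ is exactly what makes both the interchange of sum and integral and the nonterminating $q$-Gauss evaluation valid, will require careful attention. The summation of the inner ${}_2\phi_1$ in the non-balanced case is the single step most likely to need an auxiliary transformation (for instance \cite[(III.23)]{GaspRah}) before \cite[(II.8)]{GaspRah} applies cleanly.
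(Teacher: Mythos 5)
Your opening move (insert \eqref{IRAW} into the sum and interchange summation and integration) and your closing idea (the two-term structure comes from the symmetric-sum evaluation \eqref{qqform}) both match the paper's proof, but the middle of your argument contains a genuine error that breaks the chain. First, the bookkeeping: the set $\{a_pa_s\}_{s\ne p}$ always consists of products \emph{involving} $a_p$, so it can never equal $\{ab,ac,bc\}$ as you claim; and if you take $a_p=d$ (your stated choice), the denominators $(q,ad,bd,cd;q)_n$ share no factor with the prefactor $(ab,ac,bc;q)_n$ coming from \eqref{IRAW}, so nothing telescopes at all. The workable choice is to match $a_p$ with one of $a,b,c$ (the paper takes $a_p=a$, denominators $(q,ab,ac,ad;q)_n$), and then the cancellation is only partial: the inner sum is
\begin{equation*}
\qhyp32{d\frac{\sigma}{z},abcd/q,bc}{abc\frac{z}{\sigma},ad}{q,\frac{tz}{\sigma}},
\end{equation*}
a ${}_3\phi_2$, not the ${}_2\phi_1$ you wrote down. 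Second, and more fundamentally, your plan to evaluate the inner series in closed form by the ($q$-)Gauss sum cannot work: the argument $tz/\sigma$ contains the free generating-function variable $t$, whereas \cite[(II.8)]{GaspRah} applies only when the argument equals the specific value $c/(ab)$ determined by the parameters. A nonterminating ${}_2\phi_1$ (let alone ${}_3\phi_2$) with a free argument admits no infinite-product evaluation, and no transformation such as \cite[(III.23)]{GaspRah} will manufacture one.

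What saves the proof in the paper is a structural observation you missed: the inner ${}_3\phi_2$ above is \emph{well-poised} (all three column products equal $abcd$), so by \cite[(3.4.1)]{GaspRah} it can be written as a sum of two ${}_5\phi_4$'s with argument $q$, and therefore — by \eqref{qqform}, run in the opposite direction from how you propose to use it — it admits its own integral representation over a new variable $w=\expe^{i\eta}$. Substituting that representation produces a double integral; one then interchanges the two integrals and evaluates the original $\psi$-integral in closed form using Gasper's integral \eqref{Gint}. The surviving $\eta$-integral is again of the form \eqref{qqform}, and its evaluation as a symmetric sum of two argument-$q$ series is what yields the two ${}_6\phi_5$'s on the right-hand side of \eqref{genfun2ask}. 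So the symmetric-sum mechanism you invoke does appear, but only after the well-poised transformation, the auxiliary integral, and the double-integral rearrangement — none of which your proposal supplies, and without which the computation stalls at an unsummable series under the integral sign.
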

\begin{proof}
Start with the left-hand side of 
\eqref{genfun2ask} and insert
\eqref{IRAW}. This produces 
\begin{eqnarray}
&&\sum_{n=0}^\infty
\frac{(abcd/q;q)_nt^n\,p_n(x;{\mathfrak a}|q)}{(q,ab,ac,ad;q)_n}=
\frac{(q,a\expe^{\pm i\theta},
b\expe^{\pm i\theta},
c\expe^{\pm i\theta};q)_\infty
}
{2\pi
{\vartheta(f,f\expe^{2i\theta};q)}
(ab,ac,bc;q)_\infty}
\nonumber\\
&&\hspace{1cm}
\times\int_{-\pi}^\pi\frac{
((f\expe^{i\theta},\frac{q}{f}\expe^{-i\theta})
\frac{\sigma}{z},(f\expe^{i\theta},\frac{q}{f}\expe^{-i\theta},abc)\frac{z}{\sigma};q)_\infty}
{(\expe^{\pm i\theta}\frac{\sigma}{z},(a,b,c)\frac{z}{\sigma};q)_\infty}
\qhyp32{d\frac{\sigma}{z},abcd/q,bc}
{abc\frac{z}{\sigma},ad}{q,\frac{tz}{\sigma}}
{\mathrm d}\psi.
\end{eqnarray}
The ${}_3\phi_2$ can be written as a sum
of two ${}_5\phi_4(q,q)$ using 
\cite[(3.4.1)]{GaspRah}, since it is 
well-poised. Comparing 
this sum using 
\eqref{qqform} (see also Corollary \ref{3phi2thm}) produces
\begin{eqnarray}
&&\hspace{-0.5cm}\qhyp32{d\frac{\sigma}{z},abcd/q,bc}
{abc\frac{z}{\sigma},ad}{\boro{q,} \frac{tz}{\sigma}}
=\frac{(q,abcd/q,\frac{az}{\sigma};q)_\infty}{2\pi
{\vartheta(h,h\frac{a}{t};q)}
(abc\frac{z}{\sigma},ad;q)_\infty}\nonumber\\
&&\hspace{0.5cm}\times\int_{-\pi}^\pi
\frac{((h\sqrt{\frac{a}{t}},\frac{q}{h}\sqrt{\frac{t}{a}})\frac{\tau}{w},(h\sqrt{\frac{a}{t}},\frac{q}{h}\sqrt{\frac{t}{a}},d\sqrt{ta},\frac{bcdt^{3/2}}{q\sqrt{a}},bc\sqrt{ta}\frac{z}{\sigma})\frac{w}{\tau};q)_\infty}
{((\sqrt{\frac{a}{t}},\sqrt{\frac{t}{a}})\frac{\tau}{w},(\pm\sqrt{\frac{tbcd}{q}},\pm\sqrt{tbcd},\sqrt{ta}\frac{z}{\sigma})\frac{w}{\tau};q)_\infty}{\mathrm d}{\eta},
\end{eqnarray}
where $w=\expe^{i\eta}$. Inserting the above integral
representation, rearranging
the integrals and evaluating
the outer integral using Gasper's
integral \eqref{Gint} completes the proof.
\end{proof}

We can also derive an integral representation for a product
of two ${}_2\phi_1$'s by using the other generating
function which is known for Askey--Wilson polynomials
\cite[(14.1.15)]{Koekoeketal}. Integral representations
for products of basic hypergeometric functions is an
interesting direction of research.

\begin{thm}
Let $a,b,c,d,t,f\in\CCast$, $q\in\CCdag$, \moro{$\sigma\in(0,1)$,} $|t|<\moro{\sigma}$,
\moro{$f,f\expe^{2i\theta}\ne q^m$, $m\in\Z$},
$z=\expe^{i\psi}$. Then
\begin{eqnarray}
&&\hspace{-0.5cm}
\int_{-\pi}^\pi\frac{
((f\expe^{i\theta},\frac{q}{f}\expe^{-i\theta})
\frac{\sigma}{z},(f\expe^{i\theta},\frac{q}{f}\expe^{-i\theta},abc)
\frac{z}{\sigma};q)_\infty}
{(\expe^{\pm i\theta}\frac{\sigma}{z},(a,b,c)\frac{z}{\sigma};q)_\infty}
\qhyp32{d\frac{\sigma}{z},ab,ac}
{abc\frac{z}{\sigma},ad}{q,\frac{tz}{\sigma}}
{\mathrm d}\psi\nonumber\\
&&\hspace{-0.2cm}=
2\pi\frac{
{\vartheta(f,f\expe^{2i\theta};q)}
(ab,ac,bc;q)_\infty}{(q,a\expe^{\pm i\theta},
b\expe^{\pm i\theta},c\expe^{\pm i\theta};q)_\infty
}
\!\qhyp21{a\expe^{i\theta},d\expe^{i\theta}}{ad}{q,t\expe^{-i\theta}}
\!\qhyp21{b\expe^{-i\theta},c\expe^{-i\theta}}{bc}{q,t\expe^{i\theta}}\!.
\end{eqnarray}
\end{thm}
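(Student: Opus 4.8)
The plan is to reverse the mechanism used in the preceding theorem, starting from the closed form on the right-hand side. The product of the two ${}_2\phi_1$'s is one of the standard generating functions for the Askey--Wilson polynomials \cite[(14.1.15)]{Koekoeketal}; since $p_n(x;{\mathfrak a}|q)$ is symmetric in all four of its parameters, I may pair $\{a,d\}$ with $\{b,c\}$ and write
\[
\qhyp21{a\expe^{i\theta},d\expe^{i\theta}}{ad}{q,t\expe^{-i\theta}}\qhyp21{b\expe^{-i\theta},c\expe^{-i\theta}}{bc}{q,t\expe^{i\theta}}=\sum_{n=0}^\infty\frac{p_n(x;{\mathfrak a}|q)}{(q,ad,bc;q)_n}\,t^n.
\]
Thus the first step is to replace the product of the two ${}_2\phi_1$'s by this single power series in $t$, reducing the claimed identity to showing that the left-hand integral equals the stated prefactor times $\sum_n p_n(x;{\mathfrak a}|q)\,t^n/(q,ad,bc;q)_n$.

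Next I would insert the integral representation \eqref{IRAW} for each $p_n$. This substitutes $p_n(x;{\mathfrak a}|q)$ by the $n$-independent prefactor $(q,a\expe^{\pm i\theta},b\expe^{\pm i\theta},c\expe^{\pm i\theta};q)_\infty/[2\pi\,\vartheta(f,f\expe^{2i\theta};q)(ab,ac,bc;q)_\infty]$ multiplied by $(ab,ac,bc;q)_n$ and by the integrand of $D_n(x;{\mathfrak a},f,\sigma|q)$. Pulling the $n$-independent factors out of the sum reproduces exactly the prefactor on the right-hand side of the theorem, and what remains is $\sum_n \tfrac{(ab,ac,bc;q)_n}{(q,ad,bc;q)_n}\,t^n D_n$. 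Interchanging the summation with the $\psi$-integration, I then collect the $n$-dependent factors under the integral sign.

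The crucial algebraic step is the summation of the resulting series. The factor $(bc;q)_n$ supplied by \eqref{IRAW} cancels the $(bc;q)_n$ in the generating-function denominator, so that, together with the $n$-dependent part $(d\frac{\sigma}{z};q)_n(z/\sigma)^n/(abc\frac{z}{\sigma};q)_n$ of the $D_n$-integrand, the summand becomes
\[
\frac{(ab,ac,bc;q)_n}{(q,ad,bc;q)_n}\,\frac{(d\frac{\sigma}{z};q)_n}{(abc\frac{z}{\sigma};q)_n}\left(\frac{tz}{\sigma}\right)^n=\frac{(ab,ac,d\frac{\sigma}{z};q)_n}{(q,ad,abc\frac{z}{\sigma};q)_n}\left(\frac{tz}{\sigma}\right)^n,
\]
whose sum over $n$ is precisely
\[
\qhyp32{d\frac{\sigma}{z},ab,ac}{abc\frac{z}{\sigma},ad}{q,\frac{tz}{\sigma}}.
\]
This is exactly the ${}_3\phi_2$ appearing in the left-hand integrand, so the integral collapses term-for-term to the left-hand side and the identity follows.

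The main obstacle is analytic rather than algebraic: justifying the interchange of summation and integration and the termwise summation inside the integral. Under the stated hypotheses $|t|<\sigma$ together with $\max(|a|,|b|,|c|,|d|)<1$ inherited from \eqref{IRAW}, the generating series converges and the $D_n$-integrand is uniformly bounded on $[-\pi,\pi]$, so dominated convergence legitimizes the swap; moreover the effective argument $tz/\sigma$ of the emergent ${}_3\phi_2$ has modulus $|t|/\sigma<1$ on the contour, guaranteeing its convergence there. The remaining care is purely bookkeeping of the infinite $q$-shifted-factorial prefactors, along with the excluded-value conditions $f,f\expe^{2i\theta}\ne q^m$ that ensure the $\vartheta$-factor arising from \eqref{IRAW} does not vanish.
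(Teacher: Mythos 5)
Your proposal is correct and follows essentially the same route as the paper: the paper likewise takes the known generating function $\sum_n t^n p_n(x;{\mathfrak a}|q)/(q,ad,bc;q)_n = {}_2\phi_1\cdot{}_2\phi_1$, inserts the integral representation \eqref{IRAW} into the series side, sums the $n$-dependent factors under the integral into the ${}_3\phi_2$, and compares the two expressions. Your added remarks on dominated convergence and the nonvanishing of the theta factor are sound bookkeeping that the paper leaves implicit.
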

\begin{proof}
Start with the generating function for the Askey--Wilson polynomials 
\cite[(1.9)]{IsmailWilson82},
\cite[(14.1.15)]{Koekoeketal},
\begin{eqnarray}
\sum_{n=0}^\infty\frac{t^n\,p_n(x;{\mathfrak a}|q)}
{(q,ad,bc;q)_n}=
\qhyp21{a\expe^{i\theta},d\expe^{i\theta}}{ad}{q,t\expe^{-i\theta}}
\qhyp21{b\expe^{-i\theta},c\expe^{-i\theta}}{bc}{q,t\expe^{i\theta}}.
\label{AWgf2}
\end{eqnarray}
Inserting the integral representation 
\eqref{IRAW} into the left-hand side of
\eqref{AWgf2} produces the
following integral representation
for \eqref{AWgf2}, namely
\begin{eqnarray}
&&\sum_{n=0}^\infty
\frac{t^n\,p_n(x;{\mathfrak a}|q)}{(q,ad,bc;q)_n}=
\frac{(q,a\expe^{\pm i\theta},
b\expe^{\pm i\theta},c\expe^{\pm i\theta};q)_\infty
}{2\pi
{\vartheta(f,f\expe^{2i\theta};q)}
(ab,ac,bc;q)_\infty}
\nonumber\\
&&\hspace{1cm}
\times\int_{-\pi}^\pi\frac{
((f\expe^{i\theta},\frac{q}{f}\expe^{-i\theta})
\frac{\sigma}{z},(f\expe^{i\theta},\frac{q}{f}\expe^{-i\theta},abc)
\frac{z}{\sigma};q)_\infty}
{(\expe^{\pm i\theta}\frac{\sigma}{z},(a,b,c)\frac{z}
{\sigma};q)_\infty}
\qhyp32{d\frac{\sigma}{z},ab,ac}
{abc\frac{z}{\sigma},ad}{q,\frac{tz}{\sigma}}
{\mathrm d}\psi.
\label{intrefAW2}
\end{eqnarray}
Comparing \eqref{AWgf2} with  \eqref{intrefAW2}
completes the proof.
\end{proof}
\subsection{Continuous dual $q$-Hahn polynomials}

If you let $a_4\to 0$ \moro{$(d\to 0)$} in the Askey--Wilson polynomials you obtain
the three parameter symmetric 
continuous dual $q$-Hahn polynomials
\cite[Section 14.3]{Koekoeketal}.
In this case the Askey--Wilson polynomials
$p_n(x;{\bf a}|q)$ with ${\bf a}=\{a_1,a_2,a_3,a_4\}$ reduce to
the continuous dual $q$-Hahn polynomials
$p_n(x;{\bf a}|q)$ with ${\bf a}:=\{a_1,a_2,a_3\}$,
\moro{${\mathfrak a}:=\{a,b,c\}$, ${\mathfrak a}={\mathbf a}$}.

\subsubsection{Integral representations for the continuous dual $q$-Hahn polynomials}

One can obtain several integral representations for 
the continuous dual $q$-Hahn polynomials by
starting with \eqref{IRAW}.
\begin{cor}
Let $a,b,c,f\in\CCast$, 
$\max(|a|,|b|,|c|)<1$, \moro{$\sigma\in(0,1)$,} $q\in\CCdag$,
$x=\cos\theta$\boro{$\in[-1,1]$}, $z=\expe^{i\psi}$,
\moro{$f,f\expe^{2i\theta}\ne q^m$, $m\in\Z$}. Then
\begin{eqnarray}
&&\hspace{-1.5cm}p_n(x;{\mathfrak a}|q)=
\frac{(q,a\expe^{\pm i\theta},
b\expe^{\pm i\theta};q)_\infty
(ab;q)_n}
{2\pi
{\vartheta(f,f\expe^{2i\theta};q)}
(ab;q)_\infty}E_n({\mathfrak a};f,\sigma|q)\\[0.2cm]
&&\hspace{0.2cm}=
\frac{(q,a\expe^{\pm i\theta},
b\expe^{\pm i\theta},c\expe^{\pm i\theta};q)_\infty
(ab,ac,bc;q)_n}
{2\pi
{\vartheta(f,f\expe^{2i\theta};q)}
(ab,ac,bc;q)_\infty}
F_n({\mathfrak a};f,\sigma|q),
\end{eqnarray}
where
\begin{eqnarray}
&&\hspace{-1.7cm}E_n({\mathfrak a};f,\sigma|q)
\label{IRcdqE}
=\int_{-\pi}^\pi\frac{
((f\expe^{i\theta},\frac{q}{f}\expe^{-i\theta})
\frac{\sigma}{z},(f\expe^{i\theta},\frac{q}{f}
\expe^{-i\theta},abc)\frac{z}{\sigma};q)_\infty}
{(\expe^{\pm i\theta}\frac{\sigma}{z},(a,b)\frac{z}
{\sigma};q)_\infty}
\left(c\text{\scriptsize{$\frac{\sigma}{z}$}};q\right)_n
\left(\frac{z}{\sigma}\right)^n{\mathrm d}\psi,
\end{eqnarray}
\begin{eqnarray}
&&\hspace{-0.5cm}
F_n({\mathfrak a};f,\sigma|q)=
\int_{-\pi}^\pi\frac{
((f\expe^{i\theta},\frac{q}{f}\expe^{-i\theta})
\frac{\sigma}{z},(f\expe^{i\theta},\frac{q}
{f}\expe^{-i\theta},abc)\frac{z}{\sigma};q)_\infty}
{(\expe^{\pm i\theta}\frac{\sigma}{z},(a,b,c)\frac{z}
{\sigma};q)_\infty(abc\frac{z}{\sigma};q)_n}
\left(\frac{z}{\sigma}\right)^n{\mathrm d}\psi
\label{IRcdqH}\\[0.2cm]
&&\hspace{1.7cm}=\int_{-\pi}^\pi\frac{
((fabc\,\expe^{i\theta},\frac{q}{f}abc\,\expe^{-i\theta})
\frac{\sigma}{z},
(f\frac{1}{abc}\expe^{i\theta},\frac{q}{f}\frac{1}{abc}
\expe^{-i\theta},1)\frac{z}{\sigma};q)_\infty}
{(abc\,\expe^{\pm i\theta}\frac{\sigma}{z},(\frac{1}
{ab},\frac{1}{ac},\frac{1}{bc})\frac{z}
{\sigma};q)_\infty(\frac{z}{\sigma};q)_n}
\left(\frac{1}{abc}\frac{z}{\sigma}\right)^n{\mathrm d}\psi.
\label{IRcdqH2}
\end{eqnarray}
\end{cor}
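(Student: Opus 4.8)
The plan is to derive every representation in the statement from the two Askey--Wilson integral representations \eqref{IRAW}, \eqref{IRAW2} of the preceding theorem, specialised to the confluent limit $d=a_4\to0$ under which the terminating ${}_4\phi_3$ in \eqref{AW} collapses to the ${}_3\phi_2$ of the continuous dual $q$-Hahn polynomials. Since the prefactor multiplying $D_n$ in that theorem is independent of $d$, only the generating factor inside the integrand has to be followed. For the two $F_n$ forms this is immediate: putting $d=0$ in \eqref{IRAW} sends $(d\frac{\sigma}{z};q)_n\to(0;q)_n=1$ and leaves precisely the integrand of \eqref{IRcdqH}, while putting $d=0$ in \eqref{IRAW2} sends $(abcd\frac{\sigma}{z};q)_n\to1$ and gives \eqref{IRcdqH2}; the equality of these two forms is inherited from the equality of \eqref{IRAW} and \eqref{IRAW2} established in that theorem.

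The real work is the $E_n$ form \eqref{IRcdqE}, whose shorter prefactor carries only $(ab;q)_n$. This is \emph{not} a confluent limit of \eqref{IRAW}: its integrand simultaneously displays the numerator factor $(abc\frac{z}{\sigma};q)_\infty$, which in the Askey--Wilson template means $a,b,c$ are all ``ordinary'', and the factor $(c\frac{\sigma}{z};q)_n$ with $c$ absent from the denominator, which means $c$ is ``distinguished'', and no single $\to0$ limit can generate both features at once. I would therefore treat \eqref{IRcdqE} directly through Theorem \ref{gascaru}. The key observation is that the polynomial factor has net degree zero in $z$: by \eqref{poch.id:5}, $(c\frac{\sigma}{z};q)_n(\frac{z}{\sigma})^n=(-c)^nq^{\binom{n}{2}}(\frac{q^{1-n}}{c}\frac{z}{\sigma};q)_n$, and writing $(\frac{q^{1-n}}{c}\frac{z}{\sigma};q)_n=(\frac{q^{1-n}}{c}\frac{z}{\sigma};q)_\infty/(\frac{q}{c}\frac{z}{\sigma};q)_\infty$ turns $E_n$ into a ratio of infinite $q$-shifted factorials with no surviving power of $z$. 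That integral is exactly of the form \eqref{qqform} with $d_1=\expe^{i\theta}$, $d_2=\expe^{-i\theta}$, numerator set $\{abc,\,q^{1-n}/c\}$ and denominator set $\{a,b,\,q/c\}$, so Theorem \ref{gascaru} evaluates it as $\frac{2\pi}{(q;q)_\infty}\vartheta(f,f\expe^{2i\theta};q)\,H$. After cancelling the common $\vartheta(f,f\expe^{2i\theta};q)$ and $(q;q)_\infty$ against the $E_n$ prefactor, what remains must be matched to the ${}_3\phi_2$ normalisation of $p_n(x;\mathfrak a|q)$; I would anchor the argument at $n=0$, where $H$ reduces to a known nonterminating summation, and then read the general case off the parameter dictionary.

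I expect this final reconciliation to be the main obstacle. Theorem \ref{gascaru} returns the \emph{symmetric two-term sum} $H$ over $\expe^{\pm i\theta}$, whereas the continuous dual $q$-Hahn polynomial is a \emph{single} ${}_3\phi_2$; one must show that, once multiplied by the $E_n$ prefactor, the two terms collapse to that one series. Equivalently, passing from the $F_n$ integrand to the $E_n$ integrand secretly invokes a specific ${}_3\phi_2$ transformation, since the two integrands are genuinely different functions of $\psi$ and agree only after integration, and the content of the proof is to identify that transformation as the correct one. Verifying the inherited convergence conditions ($\sigma\in(0,1)$, $\max(|a|,|b|,|c|)<1$, $f,f\expe^{2i\theta}\ne q^m$), with the deformed-contour caveat of the remark following Theorem \ref{gascard} wherever a parameter such as $q/c$ violates the nominal size constraint, is then routine.
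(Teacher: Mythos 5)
Your handling of the two $F_n$ representations coincides with the paper's proof: set $d=0$ in \eqref{IRAW} to obtain \eqref{IRcdqH} and in \eqref{IRAW2} to obtain \eqref{IRcdqH2}, with the equality of the two forms inherited from the theorem; that part is correct. The genuine gap is the $E_n$ case, where you do not actually give a proof: after rewriting $(c\frac{\sigma}{z};q)_n(\frac{z}{\sigma})^n$ and reducing the integral to the two-term sum $H$ of Theorem \ref{gascaru}, you defer the essential step --- showing that this symmetric sum collapses to the single terminating ${}_3\phi_2$ defining $p_n(x;{\mathfrak a}|q)$ --- calling it ``the main obstacle.'' That step cannot be completed, because the target you are aiming at, \eqref{IRcdqE} exactly as printed with the factor $(abc\frac{z}{\sigma};q)_\infty$ in the numerator, is false. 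Test $n=0$: without that factor the integral is precisely the Askey--Roy integral \eqref{ARint} with $(c,d)\mapsto(\expe^{i\theta},\expe^{-i\theta})$, and its evaluation makes the right-hand side of the first display equal to $p_0=1$; with the factor, your own reduction (run through the two-term relation underlying \eqref{2phi1a}, i.e.\ DLMF (17.9.3)) collapses $H$ to a single series and turns the claim into ${}_2\phi_1(A,B;C;q,z)=\frac{(Az;q)_\infty}{(z,C;q)_\infty}$ with $A=a\expe^{i\theta}$, $B=ac$, $C=abc\expe^{i\theta}$, $z=b\expe^{-i\theta}$ effectively independent --- absurd, since the right side does not depend on $B$ at all (take $B=C$ and compare with the $q$-binomial theorem). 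Equivalently, expanding $(abc\frac{z}{\sigma};q)_\infty$ in powers of $abc$ shows the integral acquires a nonvanishing correction (an $m=1$ Fourier coefficient of the Askey--Roy integrand, cf.\ \eqref{Gmt1}) while the corollary's prefactor is unchanged, so the identity fails already at order $abc$.

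What you missed --- and what resolves the structural tension you correctly detected --- is the paper's actual derivation of \eqref{IRcdqE}: do not send $d\to0$; instead send $c\to0$ in \eqref{IRAW} (this deletes every occurrence of $abc$, deletes $c$ from the denominator, and turns $(ab,ac,bc;q)_n/(ab,ac,bc;q)_\infty$ into $(ab;q)_n/(ab;q)_\infty$), and then rename the surviving distinguished parameter $d\mapsto c$. This produces the first display of the corollary with the integrand of \eqref{IRcdqE} \emph{minus} the spurious $(abc\frac{z}{\sigma};q)_\infty$. In other words, your observation that no single confluent limit can make $c$ simultaneously ``ordinary'' and ``distinguished'' is correct, but it is the symptom of a typo in the printed formula, not a reason to mount a direct attack through Theorem \ref{gascaru}; the limit-then-relabel route both proves the corrected statement and exposes the typo. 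Had you carried out the $n=0$ anchoring you proposed, you would have discovered this. As it stands, the $E_n$ part of your proposal is incomplete and aimed at an identity that is not true as written.
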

\begin{proof}
Starting with \eqref{IRAW}, letting ${d}\mapsto 0$ 
produces \eqref{IRcdqH}, and taking
${c}\mapsto 0$ followed by
${d}\mapsto {c}$ produces \eqref{IRcdqE}.
Starting with \eqref{IRAW2}, letting
${d}\mapsto 0$ produces 
\eqref{IRcdqH2}.  This completes
the proof.
\end{proof}
\begin{rem}
The continuous dual $q$-Hahn polynomials
are symmetric in the three parameters $a$, $b$, and $c$. 
The symmetry in the parameters is evident in the 
integral representation
\eqref{IRcdqH2}.
\end{rem}

Note that starting with
\eqref{IRAW2} and taking either $a$, $b$ or $c$ 
$\mapsto 0$ does not yield a finite result.

\subsubsection{Generating functions for the continuous dual $q$-Hahn polynomials}
\medskip
There are several generating functions known
for the continuous dual $q$-Hahn polynomials.
Some of them follow by taking the $a_4\to 0$ limit for
generating functions of the Askey--Wilson 
polynomials. One such example which hasn't appeared
frequently in the literature is the $a_4\to 0$ limit
of the Rahman generating function \eqref{genfun2ask}.
This is given as follows.

\begin{cor}
Let $k,p\in\{1,2,3\}$,
${\bf a}:=\moro{\{}a_1,a_2,a_3\moro{\}}$,
$a_k\moro{,t}\in\moro{{\mathbb C}^\ast}$,
$x=\cos\theta\in[-1,1]$,
\boro{$q\in \CCdag$},
$|t|<1$. Then
\begin{eqnarray}
&&\hspace{-1.5cm}\sum_{n=0}^\infty 
\frac{p_n(x;{\bf a}|q)}{(q,\{a_pa_s\}_{s\ne p};q)_n}t^n
=\frac{1}{(ta_p^{-1};q)_\infty}\,\qhyp43
{a_p \boro{\expe}^{\pm i\theta},0,0}
{\{a_pa_s\}_{s\ne p}
,qa_pt^{-1}}
{q,q}
\nonumber\\
&&\hspace{2.2cm}+\frac{(
\{ta_s\}_{s\ne p},
a_p\boro{\expe}^{\pm i\theta};q)_\infty}
{(\{a_pa_s\}_{s\ne p},a_pt^{-1},t\boro{\expe}^{\pm i\theta};q)_\infty}
\qhyp43{t \boro{\expe}^{\pm i\theta},0,0}
{\{ta_s\}_{s\ne p},
qta_p^{-1}}{q,q}
.
\label{genfun2cdH}
\end{eqnarray}
\end{cor}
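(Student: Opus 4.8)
The plan is to derive \eqref{genfun2cdH} simply as the $a_4\to0$ (equivalently $d\to0$) limit of the Rahman generating function \eqref{genfun2ask}, mirroring the way the continuous dual $q$-Hahn polynomials arise as this limit of the Askey--Wilson polynomials. Throughout one has $a_{1234}=a_1a_2a_3a_4\to0$ together with $(0;q)_n=(0;q)_\infty=1$. First I would dispose of the left-hand side: in the coefficient $t^n(q^{-1}a_{1234};q)_n/(q,\{a_pa_s\}_{s\ne p};q)_n$ the factor $(q^{-1}a_{1234};q)_n\to1$, the factor $(a_pa_4;q)_n$ in the denominator tends to $1$ so the four-element set $\{a_pa_s\}_{s\ne p}$ collapses to the two-element set with $s\in\{1,2,3\}\setminus\{p\}$, and $p_n(x;{\bf a}|q)$ reduces to the corresponding continuous dual $q$-Hahn polynomial; this is exactly the left-hand side of \eqref{genfun2cdH}. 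For the scalar prefactors on the right one checks that $(ta_{1234}(qa_p)^{-1};q)_\infty\to1$ and $(q^{-1}a_{1234};q)_\infty\to1$, while each remaining infinite product that contains $a_4$ (namely those with $ta_4$ or $a_pa_4$) tends to $1$, leaving precisely the products displayed in \eqref{genfun2cdH}.

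The crux is the behaviour of the two ${}_6\phi_5$ series. In each of them \emph{four} numerator parameters tend to zero---$\pm(q^{-1}a_{1234})^{\frac12}$ and $\pm(a_{1234})^{\frac12}$ in the first series, and $\pm ta_p^{-1}(q^{-1}a_{1234})^{\frac12}$ and $\pm ta_p^{-1}(a_{1234})^{\frac12}$ in the second---while \emph{two} denominator parameters tend to zero: $a_pa_4$ and $ta_{1234}(qa_p)^{-1}$ in the first, and $ta_4$ and $q^{-1}a_{1234}(ta_p^{-1})^2$ in the second. Since $(0;q)_k=1$, every vanishing parameter contributes the factor $1$ to each term. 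The delicate bookkeeping point is that a coincident vanishing numerator/denominator pair may be deleted \emph{together} without altering the balance $s-r$ that controls the implicit factor $\bigl((-1)^kq^{\binom{k}{2}}\bigr)^{s-r}$ in \eqref{2.11}, because deleting one top and one bottom entry lowers both $r$ and $s$ by one. I would therefore cancel the two vanishing denominator parameters against two of the four vanishing numerator parameters, deleting those two pairs and leaving exactly two vanishing numerator parameters that must be retained. In the van de Bult--Rains notation \eqref{topzero} this converts each ${}_6\phi_5$ (for which $s-r=0$) into the corresponding ${}_4\phi_3$ with two zero numerator entries, e.g. the first series becomes $\qhyp43{a_p\expe^{\pm i\theta},0,0}{\{a_pa_s\}_{s\ne p},qa_pt^{-1}}{q,q}$ as in \eqref{genfun2cdH}.

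I expect this zero-counting step to be the main obstacle, and it is where the van de Bult--Rains notation is essential: naively discarding \emph{all} the vanishing parameters would replace the intended ${}_4\phi_3$ (whose power factor is $1$) by a ${}_2\phi_3$, which carries the spurious factor $q^{k(k-1)}$ and is a genuinely different function. Retaining precisely two numerator zeros is exactly what preserves the identity. Finally I would justify taking the limit termwise: both ${}_6\phi_5$ have argument $q$ with $|q|<1$ and $s-r=0$, hence converge absolutely, and their coefficients are continuous and uniformly bounded for $a_4$ in a neighbourhood of $0$, so the limit passes inside the sum; the outer generating series converges for $|t|<1$, which is the range stated in the corollary. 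Once the zero count is settled, matching the three remaining ingredients---the left-hand side, the prefactors, and the series arguments---is routine.
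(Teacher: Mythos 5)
Your proof is correct and is exactly the route the paper intends: the corollary is presented as the $a_4\to 0$ limit of the Rahman generating function \eqref{genfun2ask}, and your bookkeeping of the vanishing parameters---cancelling zero numerator/denominator pairs so that each ${}_6\phi_5$ collapses to a ${}_4\phi_3$ retaining exactly two zero numerator entries, which preserves $s-r=0$ and hence avoids any spurious $q^{\binom{k}{2}}$ factors---supplies precisely the details the paper leaves implicit. (One trivial slip: $\{a_pa_s\}_{s\ne p}$ is a three-element set, not four, collapsing to two elements; this does not affect the argument.)
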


\medskip
A non-standard generating function for 
continuous dual $q$-Hahn polynomials was presented in
\cite[(3.5)]{Atak2011}. We will show how integral representations
for continuous dual $q$-Hahn polynomials {lead} to an
easy proof of this formula.
\begin{thm}[Atakishiyeva \& Atakishiyev (2011)]
Let $a,b,c\in\CCast$, $q\in\CCdag$, 
$t\in\CC$, such that $|t|<1$. Then
\begin{eqnarray}
&&\sum_{n=0}^\infty\frac{t^n\,p_n(x;{\mathfrak a}|q)}
{(q,tabc;q)_n}
=\frac{(ta,tb,tc;q)_\infty}{(tabc,t\expe^{\pm i\theta};q)_\infty}.
\label{nonstan}
\end{eqnarray}
\end{thm}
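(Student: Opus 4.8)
The plan is to insert the integral representation \eqref{IRcdqE} of the continuous dual $q$-Hahn polynomials into the left-hand side of \eqref{nonstan}, interchange the sum and the integral, collapse the emerging series with the $q$-Gauss sum, and recognise the surviving $\psi$-integral as Gasper's integral \eqref{Gint}. First I would write $p_n(x;{\mathfrak a}|q)$ through \eqref{IRcdqE}, whose integrand (the $c\to0$, then $d\to c$ limit of \eqref{IRAW}) carries in its numerator only the $f$-tied factors $(f\expe^{i\theta},\frac{q}{f}\expe^{-i\theta})\frac{\sigma}{z}$ and $(f\expe^{i\theta},\frac{q}{f}\expe^{-i\theta})\frac{z}{\sigma}$, in its denominator the factors $\expe^{\pm i\theta}\frac{\sigma}{z}$ and $(a,b)\frac{z}{\sigma}$, and the finite piece $(c\frac{\sigma}{z};q)_n(\frac{z}{\sigma})^n$. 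I would pull the $n$-independent prefactor $(q,a\expe^{\pm i\theta},b\expe^{\pm i\theta};q)_\infty/\big(2\pi\vartheta(f,f\expe^{2i\theta};q)(ab;q)_\infty\big)$ out of the summation and fix the free scale $\sigma\in(|t|,1)$ (possible since $|t|<1$) so that the double series converges absolutely; this legitimises exchanging $\sum_{n\ge0}$ with $\int_{-\pi}^{\pi}$. After the interchange the entire $n$-dependence lies in $(ab;q)_n(c\frac{\sigma}{z};q)_n(t\frac{z}{\sigma})^n/(q,tabc;q)_n$, so the inner sum is the nonterminating $\qhyp21{ab,c\frac{\sigma}{z}}{tabc}{q,\frac{tz}{\sigma}}$.

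The engine of the argument is that this ${}_2\phi_1$ has argument $t\frac{z}{\sigma}=tabc/\big((ab)(c\frac{\sigma}{z})\big)$, so it is summable in closed form by the $q$-Gauss sum \cite[(II.8)]{GaspRah}; the convergence requirement $|t|/\sigma<1$ is exactly what the choice of $\sigma$ secures. Its value is $(tc,tab\frac{z}{\sigma};q)_\infty/(tabc,t\frac{z}{\sigma};q)_\infty$, whence the constants $(tc;q)_\infty/(tabc;q)_\infty$ leave the integral, while the single numerator factor $(tab\frac{z}{\sigma};q)_\infty$ and the denominator factor $(t\frac{z}{\sigma};q)_\infty$ are absorbed into the integrand.

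The step that makes everything close up is that the surviving $\psi$-integral is precisely the integrand of \eqref{Gint} once its six parameters are matched to $a$, $b$, $\expe^{i\theta}$, $\expe^{-i\theta}$, $t$, and $f$. Under this dictionary the two $f$-tied numerator factors of \eqref{Gint} become $f\expe^{i\theta}$ and $\frac{q}{f}\expe^{-i\theta}$, the three denominator parameters become $a$, $b$, $t$, and---this is the crucial coincidence---the lone free numerator factor of \eqref{Gint} collapses to $tab$ because $\expe^{i\theta}\expe^{-i\theta}=1$, which is exactly the factor manufactured by the $q$-Gauss step. Evaluating \eqref{Gint} then returns $2\pi\vartheta(f,f\expe^{2i\theta};q)(ab,ta,tb;q)_\infty/(q,a\expe^{\pm i\theta},b\expe^{\pm i\theta},t\expe^{\pm i\theta};q)_\infty$.

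Finally I would reassemble the prefactor, the extracted $(tc;q)_\infty/(tabc;q)_\infty$, and this evaluation; the auxiliary data then cancel wholesale, since $\vartheta(f,f\expe^{2i\theta};q)$, the $2\pi$, and the products $(q;q)_\infty$, $(a\expe^{\pm i\theta};q)_\infty$, $(b\expe^{\pm i\theta};q)_\infty$, $(ab;q)_\infty$ all drop out, leaving $(ta,tb,tc;q)_\infty/(tabc,t\expe^{\pm i\theta};q)_\infty$, the right-hand side of \eqref{nonstan}. I expect the real work to be purely organisational: justifying the term-by-term interchange, and checking the parameter dictionary with \eqref{Gint} exactly so that the free numerator factor lands on $tab$ and every trace of the auxiliary $f$ and $\sigma$ disappears, as it must since the left-hand side depends on neither.
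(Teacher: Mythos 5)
Your proof is correct, but it takes a genuinely different route from the paper's. The paper inserts the three-denominator representation \eqref{IRcdqH} into the left-hand side, so its inner sum is the non-summable series $\qhyp32{ab,ac,bc}{abc\frac{z}{\sigma},tabc}{q,\frac{tz}{\sigma}}$; it then transforms this via \cite[(III.34)]{GaspRah} into a sum of two argument-$q$ ${}_3\phi_2$'s, converts that pair back into a second contour integral by \eqref{qqform} (Theorem \ref{gascaru}), and evaluates the resulting double integral with Gasper's integral \eqref{Gint}. You instead insert the two-denominator representation $E_n$, so the inner sum is $\qhyp21{ab,c\sigma/z}{tabc}{q,tz/\sigma}$, whose argument is precisely of $q$-Gauss type; the $q$-Gauss sum collapses it in closed form, manufacturing exactly the free numerator factor $(tab\frac{z}{\sigma};q)_\infty$ that makes the surviving $\psi$-integral a verbatim instance of \eqref{Gint}. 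Your route avoids the nonterminating three-term transformation, the second integral representation, and the exchange of two integrals, so it is shorter and more elementary; what the paper's longer route buys is a demonstration of its central machinery (Theorem \ref{gascaru}), which is the expository point of the section.

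One caveat you must make explicit: your argument works with $E_n$ in the form \emph{without} any factor $(abc\frac{z}{\sigma};q)_\infty$ in the numerator --- exactly the $c\to 0$, $d\mapsto c$ limit of \eqref{IRAW} that you describe. The paper's printed \eqref{IRcdqE} retains an ``$abc$'' inside the numerator group $(f\expe^{i\theta},\frac{q}{f}\expe^{-i\theta},abc)\frac{z}{\sigma}$; that factor is inconsistent with the stated limiting procedure, and the $n=0$ case (which must reduce to the Askey--Roy integral \eqref{ARint} to give $p_0=1$) confirms it cannot be present. Had you used the printed form verbatim, your final integral would carry two free numerator factors, $abc\frac{z}{\sigma}$ and $tab\frac{z}{\sigma}$, and would no longer match \eqref{Gint}, so the proof hinges on the corrected form. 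Finally, two small convergence points: applying \eqref{Gint} at the end requires $\sigma>\max(|a|,|b|,|t|)$, not merely $\sigma\in(|t|,1)$, and the representation $E_n$ itself requires $\max(|a|,|b|,|c|)<1$, so the statement for general $a,b,c\in\CCast$ follows by analytic continuation --- an issue the paper's own proof shares.
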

\begin{proof}
{Starting} with the left-hand side of \eqref{nonstan} and {inserting} \eqref{IRcdqH}, one obtains
\begin{eqnarray}
&&\sum_{n=0}^\infty\frac{t^n\,p_n(x;{\mathfrak a}|q)}{(q,tabc;q)_n}=
\frac{(q,a\expe^{\pm i\theta},b\expe^{\pm i\theta},c\expe^{\pm i\theta};q)_\infty}
{
{\vartheta(f,f\expe^{2i\theta};q)}
(ab,ac,bc;q)_\infty}\nonumber\\
&&\hspace{1.5cm}\times\int_{-\pi}^\pi
\frac{((f\expe^{i\theta},\frac{q}{f}\expe^{-i\theta})\frac{\sigma}{z},(f\expe^{i\theta},\frac{q}{f}\expe^{-i\theta},abc)\frac{z}{\sigma};q)_\infty}
{(\expe^{\pm i\theta}\frac{\sigma}{z},(a,b,c)\frac{z}{\sigma};q)_\infty}
\qhyp32{ab,ac,bc}{abc\frac{z}{\sigma},tabc}{q,\frac{tz}{\sigma}}
{\mathrm d}\psi,
\label{prenon}
\end{eqnarray}
where $z=\expe^{i\psi}$. Using \cite[(III.34)]{GaspRah}, we can write 
the ${}_3\phi_2$ in the integrand as a sum of two ${}_3\phi_2$'s
with argument $q$. Then using \eqref{qqform} we can express it as
an integral representation, namely
\begin{eqnarray}
&&\hspace{-0.5cm}\qhyp32{ab,ac,bc}{abc\frac{z}{\sigma},tabc}{q,\frac{tz}{\sigma}}=\frac{(q,tb,tc,ab,ac,\frac{az}{\sigma};q)_\infty}
{2\pi
{\vartheta(f,f\frac{a}{t};q)}
(tabc,
abc\frac{z}{\sigma};q)_\infty}\nonumber\\
&&\hspace{3cm}\times
\int_{-\pi}^{\pi}\frac{
((f\sqrt{\frac{a}{t}},\frac{q}{f}\sqrt{\frac{t}{a}})\frac{\tau}{w},(f\sqrt{\frac{a}{t}},\frac{q}{f}\sqrt{\frac{t}{a}},
\sqrt{ta}\,bc\frac{z}{\sigma})\frac{w}{\tau};q)_\infty}
{((\sqrt{\frac{a}{t}},\sqrt{\frac{t}{a}})\frac{\tau}{w},(\sqrt{ta}\,b,\sqrt{ta}\,c,\sqrt{ta}\,\frac{z}{\sigma})\frac{w}{\tau};q)_\infty}
{\mathrm d}\eta,
\label{nonir}
\end{eqnarray}
where $w=\expe^{i\eta}$\moro{,}
\moro{and none of the arguments of the modified theta functions are equal to some $q^m$, $m\in{\mathbb Z}$.}
Inserting the integral representation
\eqref{nonir} into the right-hand side of \eqref{prenon}
and using Gasper's integral \eqref{Gint} completes the proof.
\end{proof}

\medskip
%
For continuous dual Hahn polynomials
\cite[Section 9.3]{Koekoeketal}
$S_n(x^2;{\bf a})$,
there is a generating function
which until recently there has been no known $q$-analogue for.
This is the generating function 
\cite[(9.3.16)]{Koekoeketal}
\begin{eqnarray}
&&\sum_{n=0}^\infty
\frac{(\gamma)_n
S_n(x^2;{\mathfrak a})}
{n!(a+b,a+c)_n}t^n
=(1-t)^{-\gamma}
\hyp32{\gamma,a\pm ix}
{a+b,a+c}{\frac{t}{t-1}},
\end{eqnarray}
where $\gamma$ is a free parameter, $|t| < 1$, $|t| < |1- t|$.
{Using the integral representation method
we may readily compute the following $q$-analogue
which we present now.}

{
\begin{thm}
Let $q \in \CCdag$, $\gamma\in \mathbb C$, $t, a, b, c\in \mathbb C^\ast$, $|t| < 1$. 
Let $\gamma\in\mathbb C$. Then
one has the following 
generating function for
continuous dual $q$-Hahn 
polynomials
\begin{eqnarray}
&&\hspace{-0.6cm}\sum_{n=0}^\infty \frac{(\gamma;q)_n
p_n(x;{\mathfrak a}|q)}
{(q,ab,ac;q)_n}t^n
=\frac{(a\expe^{\pm i\theta};q)_\infty}
{(ab,ac;q)_\infty}
\nonumber\\
&&\hspace{-0.4cm}\times
\left(\frac{(ab,ac,\gamma t/a;q)_\infty}
{(a\expe^{\pm i\theta},t/a;q)_\infty}\qhyp43{\gamma,a\expe^{\pm i\theta},0}
{ab,ac,qa/t}{q,q}+
\frac{(tb,tc,\gamma;q)_\infty}{(t\expe^{\pm i\theta},a/t;q)_\infty}
\qhyp43{\gamma t/a,t\expe^{\pm i\theta},0}
{tb,tc,qt/a}{q,q}\right).
\label{missing}
\end{eqnarray}
\end{thm}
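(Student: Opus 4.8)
The plan is to follow the same integral-representation template used above for the Rahman generating function \eqref{genfun2ask} and for the Atakishiyeva--Atakishiyev formula \eqref{nonstan}, working directly with a continuous dual $q$-Hahn representation rather than taking a $d\to0$ limit of an Askey--Wilson result. First I would insert the integral representation \eqref{IRcdqE} into the left-hand side of \eqref{missing}. Since $(q,ab,ac;q)_n=(q;q)_n(ab;q)_n(ac;q)_n$ and the prefactor of $p_n$ in \eqref{IRcdqE} carries a factor $(ab;q)_n$, the ratio simplifies to $1/(q,ac;q)_n$ and the $n$-sum collapses to $\sum_{n\ge0}\frac{(\gamma,\,c\sigma/z;q)_n}{(q,ac;q)_n}(tz/\sigma)^n=\qhyp21{\gamma,c\sigma/z}{ac}{q,\frac{tz}{\sigma}}$ sitting inside the $\psi$-integral. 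Note that $\gamma\cdot(c\sigma/z)\cdot(tz/\sigma)/(ac)=\gamma t/a$, so the balancing quantity $\gamma t/a$ appearing on the right-hand side of \eqref{missing} is produced automatically.

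Next, this ${}_2\phi_1$ is not well poised, so, unlike for \eqref{genfun2ask}, I would apply the generic three-term transformation \cite[(17.9.3)]{NIST:DLMF} (the same device used to produce \eqref{2phi1a}) to write it as a symmetric sum of two series of argument $q$. With the identification of parameters $\gamma$, $c\sigma/z$, $ac$ and argument $tz/\sigma$, the two resulting argument-$q$ series carry denominator data $\{ac,qa/t\}$ and $\{ct,qt/a\}$, and their accompanying infinite products involve $az/\sigma$, $t/a$, $\gamma t/a$ and $tz/\sigma$; these are exactly the parameter clusters visible in the two ${}_4\phi_3$'s of \eqref{missing}. This puts the integrand in precisely the shape to which Theorem \ref{gascaru} applies, so I would run \eqref{qqform} in reverse to replace this symmetric sum by a single nested integral over $w=\expe^{i\eta}$, choosing the parameter sets ${\bf a},{\bf c},{\bf d}$ (with the usual $\sqrt{\cdot}$ rescalings, e.g.\ $\sqrt{a/t}$ and $\sqrt{ta}$, as in \eqref{nonir}) so that the argument reduces to $q$.

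At this point the left-hand side has been rewritten as a double integral, over $\psi$ coming from the polynomial representation and over $\eta$ coming from \eqref{qqform}, where $w=\expe^{i\eta}$. I would interchange the order of integration and perform the $\psi$-integral first, with $\eta$ held fixed. After regrouping the factors that depend on $z=\expe^{i\psi}$, this inner integrand should match the special numerator structure of Gasper's integral \eqref{Gint}, which then evaluates it in closed form. The surviving $\eta$-integral is once more of the type covered by Theorem \ref{gascaru} (one checks that it has $C=3$, $A=2$, so that $C-A-2=-1$ and the series are indeed ${}_4\phi_3$'s with one vanishing numerator parameter), so a final application of \eqref{qqform} converts it into the symmetric sum of two ${}_4\phi_3$'s of argument $q$ on the right-hand side of \eqref{missing}, completing the identification.

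The main obstacle I anticipate is the bookkeeping needed to make the inner $\psi$-integral line up exactly with Gasper's integral \eqref{Gint}: as emphasized in the remark following \eqref{Gint}, it is the precise $f,\,q/f$ numerator pattern together with the balancing of the remaining parameters that yields a closed form, so the $\sqrt{\cdot}$ substitutions in the $\eta$-integral and the free auxiliary constants $\sigma,\tau,f$ must be arranged so that this pattern is present. Verifying this alignment, and tracking the convergence hypotheses $|t|<1$ and $\max(|a|,|b|,|c|)<1$ through the two applications of \eqref{qqform}, is the delicate part; the intervening series and product manipulations are routine.
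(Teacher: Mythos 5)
Your outline reproduces the paper's strategy step for step: insert an integral representation into the sum, collapse the $n$-sum to a ${}_2\phi_1$, split it into two argument-$q$ series via \cite[(17.9.3)]{NIST:DLMF}, trade that symmetric sum for an inner integral by \eqref{qqform}, interchange the integrals, evaluate the $\psi$-integral by Gasper's formula \eqref{Gint}, and read the surviving $\eta$-integral back through \eqref{qqform} as the right-hand side of \eqref{missing}. The gap is your choice of \eqref{IRcdqE} as the starting representation (the paper uses \eqref{IRcdqH2}), and it is fatal at the Gasper step rather than a matter of bookkeeping. Carrying out your own steps: \eqref{IRcdqE} yields $\qhyp21{\gamma,c\sigma/z}{ac}{q,tz/\sigma}$; the prefactor of \cite[(17.9.3)]{NIST:DLMF} supplies the $z$-dependent product $(a\frac{z}{\sigma};q)_\infty$, which cancels the factor $(a\frac{z}{\sigma};q)_\infty$ sitting in the denominator of \eqref{IRcdqE}; and the inner \eqref{qqform}-integral, with your scaling $d_1=\sqrt{a/t}$, $d_2=\sqrt{t/a}$, ${\bf a}=\{c\sqrt{at}\}$, ${\bf c}=\{\gamma\sqrt{t/a},\,\sqrt{at}\frac{z}{\sigma}\}$, contributes exactly one $z$-dependent factor, namely $(\sqrt{at}\frac{w}{\tau}\frac{z}{\sigma};q)_\infty$ in the denominator. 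Hence at fixed $w=\expe^{i\eta}$ the inner integral is
\[
\int_{-\pi}^\pi\frac{((f\expe^{i\theta},\frac{q}{f}\expe^{-i\theta})\frac{\sigma}{z},\,(f\expe^{i\theta},\frac{q}{f}\expe^{-i\theta},abc)\frac{z}{\sigma};q)_\infty}{((\expe^{i\theta},\expe^{-i\theta})\frac{\sigma}{z},\,(b,\sqrt{at}\frac{w}{\tau})\frac{z}{\sigma};q)_\infty}\,{\mathrm d}\psi,
\]
and \eqref{Gint} applies only when the numerator parameter not tied to $f$ (the analogue of $abcde$ there) equals the product of all five denominator parameters. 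Here that product is $\expe^{i\theta}\expe^{-i\theta}\,b\,\sqrt{at}\,\frac{w}{\tau}=b\sqrt{at}\frac{w}{\tau}$, which equals $abc$ only when $\frac{w}{\tau}=c\sqrt{a/t}$, i.e.\ a constraint on the integration variable $w$, not an identity. No fixed rescaling of the inner parameters can repair a $w$-dependent mismatch; the integral is also not of Askey--Roy type \eqref{ARint} (it has a spare numerator parameter) nor of the type \eqref{fourfour} (too few denominator parameters), so by Theorem \ref{gascaru} it merely re-evaluates to another symmetric sum with $w$-dependent parameters and the argument goes in a circle. Evaluating the $\eta$-integral first fails for the same reason, with the roles of $w$ and $z$ exchanged.

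The structural point you missed is that \eqref{IRcdqE} is not ``Gasper-balanced'': the product of its denominator parameters $\expe^{i\theta}\cdot\expe^{-i\theta}\cdot a\cdot b=ab$ differs from its spare numerator parameter $abc$ by the factor $c$ --- precisely the parameter that was absorbed into $(c\frac{\sigma}{z};q)_n$. The symmetric representation \eqref{IRcdqH2} is engineered so the balance survives the whole computation: its denominator parameters $abc\expe^{\pm i\theta}$, $\frac{1}{ab}$, $\frac{1}{ac}$, $\frac{1}{bc}$ have product $1$, equal to its spare numerator parameter $1$; after the three-term split (whose prefactor exchanges the numerator $1$ for $\frac{1}{bc}$) and insertion of the inner integral, the spare numerator parameter becomes $\sqrt{t/a}\frac{w}{\tau}$, which equals $(abc\expe^{i\theta})(abc\expe^{-i\theta})\frac{1}{ab}\frac{1}{ac}\cdot\frac{1}{bc}\sqrt{t/a}\frac{w}{\tau}$ identically in $w$, so \eqref{Gint} evaluates the inner integral in closed form. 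Replace \eqref{IRcdqE} by \eqref{IRcdqH2} in your first step --- the ratio $(ab,ac,bc;q)_n/(q,ab,ac;q)_n=(bc;q)_n/(q;q)_n$ then produces $\qhyp21{\gamma,bc}{z/\sigma}{q,\frac{tz}{abc\sigma}}$ --- and the rest of your outline goes through exactly as written; that is the paper's proof.
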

}
\begin{proof}
Start with the left-hand side of 
\eqref{missing} and insert
\eqref{IRcdqH2}. This produces 
\begin{eqnarray}
&&\sum_{n=0}^\infty
\frac{(\gamma;q)_nt^n\,p_n(x;{\mathfrak a}|q)}{(q,ab,ac;q)_n}=
\frac{(q,a\expe^{\pm i\theta},
b\expe^{\pm i\theta},
c\expe^{\pm i\theta};q)_\infty
}
{2\pi
{\vartheta(f,f\expe^{2i\theta};q)}
(ab,ac,bc;q)_\infty}
\nonumber\\
&&\hspace{1cm}
\times\int_{-\pi}^\pi\frac{
((fabc\,\expe^{i\theta},\frac{q}{f}abc\,\expe^{-i\theta})
\frac{\sigma}{z},
(f\frac{1}{abc}\expe^{i\theta},\frac{q}{f}
\frac{1}{abc}\expe^{-i\theta},1)\frac{z}{\sigma};q)_\infty}
{(abc\,\expe^{\pm i\theta}\frac{\sigma}{z},(\frac{1}{ab},\frac{1}{ac},\frac{1}{bc})\frac{z}{\sigma};q)_\infty}
\qhyp21{\gamma,bc}
{\frac{z}{\sigma}}{q,\frac{tz}{abc\sigma}}
{\mathrm d}\psi.
\end{eqnarray}
The ${}_2\phi_1$ can be written either
as a sum of two 
\moro{$\nqphyp{2}{2}{-1}$}'s 
\cite[(17.9.3)]{NIST:DLMF}
or as a sum of two 
\moro{$\nqphyp{3}{1}{1}$}'s
\cite[(17.9.3\_5)]{NIST:DLMF}.
We use \eqref{2phi1a} which corresponds to the expansion
of a ${}_2\phi_1$ with a ${}_3\phi_2$ with one vanishing
numerator parameter.
Comparing 
this sum using 
\eqref{qqform} produces
\begin{eqnarray}
&&\hspace{-1.2cm}\qhyp21{\gamma,bc}
{\frac{z}{\sigma}}{q,\frac{tz}{abc\sigma}}
=\frac{(q,\gamma,
\gamma \frac{t}{a},\frac{z}{bc\sigma};q)_\infty}
{2\pi
{\vartheta(h,h\frac{a}{t};q)}
(\frac{z}{\sigma};q)_\infty}
\nonumber\\&&\hspace{3.5cm}\times
\int_{-\pi}^\pi\frac{((h\sqrt{\frac{a}{t}},\frac{q}{h}
\sqrt{\frac{t}{a}})\frac{\tau}{w},(h\sqrt{\frac{a}{t}},\frac{q}{h}
\sqrt{\frac{t}{a}},\sqrt{\frac{t}{a}}\frac{z}{\sigma})
\frac{w}{\tau};q)_\infty}{((\sqrt{\frac{a}{t}},\sqrt{\frac{t}{a}})
\frac{\tau}{w},(\gamma\sqrt{\frac{t}{a}},\sqrt{\frac{t}{a}}
\frac{z}{bc\sigma})\frac{w}{\tau};q)_\infty}{\mathrm d}{\eta},
\end{eqnarray}
where $w=\expe^{i\eta}$\moro{,}
\moro{and none of the arguments of the modified theta functions are equal to some $q^m$, $m\in{\mathbb Z}$.}
Inserting the above integral
representation, rearranging
the integrals and evaluating
the outer integral using Gasper's
integral \eqref{Gint} completes the proof.
\end{proof}

{
If one {lets} $\gamma\to 0$ in \eqref{missing}
then one obtains \eqref{genfun2cdH}.
Furthermore, 
if you then let $a_3\to 0$ you produce the
following well-known generating
function for Al-Salam--Chihara polynomials
\cite[(14.8.16)]{Koekoeketal}
\begin{eqnarray}
&&\hspace{-0.6cm}\sum_{n=0}^\infty \frac{(\gamma;q)_nt^n
p_n(x;{\mathfrak a}|q)}
{(q,ab;q)_n}
=\frac{(a\expe^{\pm i\theta};q)_\infty}
{(ab;q)_\infty}
\nonumber\\
&&\hspace{-0.0cm}\times
\left(\frac{(ab,\gamma t/a;q)_\infty}{(a\expe^{\pm i\theta},t/a;q)_\infty}
\qhyp32{\gamma,a\expe^{\pm i\theta}}
{ab,qa/t}{q,q}+
\frac{(tb,\gamma;q)_\infty}{(t\expe^{\pm i\theta},a/t;q)_\infty}
\qhyp32{\gamma t/a,t\expe^{\pm i\theta}}
{tb,qt/a}{q,q}\right)\nonumber\\
&&=\frac{(\gamma t\expe^{i\theta};q)_\infty}{(t\expe^{i\theta};q)_\infty}
\qhyp32{\gamma,a\expe^{i\theta},b\expe^{i\theta}}
{ab,\gamma t\expe^{i\theta}}{q,t\expe^{-i\theta}},
\end{eqnarray}
where the second equality 
is obtained by using the nonterminating
basic hypergeometric series transformation
\cite[(III.34)]{GaspRah}.}


\section*{Acknowledgements}
Much appreciation to Tom Koornwinder for inviting 
the authors to investigate 
integral representations for basic
hypergeometric orthogonal polynomials.
We would also like to especially thank
George Gasper for sharing his deep knowledge on the
subject contained in this manuscript,
for many helpful remarks he had made and for the
many useful conversations we have had.
The first author would like to thank Dennis Stanton for 
pointing out the non-standard generating
function for continuous dual $q$-Hahn polynomials
\eqref{nonstan}.
{We would also like to thank
the referee for many helpful suggestions and especially for their
insight into Dick Askey's viewpoint.}
R.S.C-S acknowledges financial  support  through  the  research  project  PGC2018–096504-B-C33  supported  by  Agencia  Estatal  de Investigaci\'on of Spain.

\bibliographystyle{plain}
\bibliography{refbib} 

\end{document}